\documentclass[a4paper]{amsart}
\usepackage{mathmacro}
\title[Logarithmic Sobolev inequalities via semigroup interpolation]{Sharp quantum logarithmic Sobolev inequalities via semigroup interpolation}
\hypersetup{hidelinks,pdftitle={Sharp quantum logarithmic Sobolev inequalities via semigroup interpolation}}
\author{Gan Yao}
\address[Gan Yao]{Institute for Advanced Study in Mathematics, Harbin Institute of Technology,  Harbin 150001, China.}
\email{gan.yao.math@gmail.com}
\date{\today}

\begin{document}
\begin{abstract}
We develop a semigroup interpolation method for sharp logarithmic Sobolev inequalities using spectral calculus and estimates of $\operatorname{Re}\inner{Lx,x^2}_2$, where $L$ generates an auxiliary symmetric quantum Markov semigroup. We apply the method to free groups and to crossed products of a finite von Neumann algebra $\mathcal N$ by $\mathbb Z$ or $\mathbb Z_n$, for $n=2$ or $n\ge4$, when the action commutes with the semigroup on $\mathcal N$. In particular, the word-length generators on quantum tori and the Weyl word-length generators on $M_n(\mathbb C)$ satisfy logarithmic Sobolev inequalities with optimal constant $2$.
\end{abstract}
\keywords{Logarithmic Sobolev inequality, semigroup interpolation, free group, crossed product, quantum torus, finite cyclic group, matrix algebra}
\maketitle

\section{Introduction}\label{Sec: Introduction}
Hypercontractivity emerged from harmonic analysis and quantum field theory. Foundational results include Bonami's inequalities for Fourier multipliers on product spaces~\cite{MR283496} and Nelson's work on the classical Ornstein--Uhlenbeck semigroup~\cite{MR343816}. Let $(P_t)_{t\geq0}$ be a Markov semigroup on a probability space $(\Omega,\Sigma,\mu)$. For $1<p<q<\infty$, hypercontractivity means that there exists a time $t_{p,q}$ such that
\[
\norm{P_tf}_q\leq\norm f_p,\qquad t\geq t_{p,q},\quad f\in L_p(\Omega,\mu).
\]
The sharp hypercontractivity problem is to determine the least admissible time,
\[
t_{p,q}^{\mathrm{opt}}=\inf\{t\geq0:\norm{P_t}_{L_p\to L_q}\leq1\}.
\]
For a symmetric Markov semigroup $P_t=e^{-tA}$, Gross' equivalence~\cite{MR420249} identifies the family of bounds
\[
\norm{P_tf}_q\le\norm f_p \quad\text{whenever}\quad  t\ge\frac C4\log\frac{q-1}{p-1},\qquad 1<p\le q<\infty,
\]
with the logarithmic Sobolev inequality (LSI)
\[
\Ent_\mu(f^2)\le C\norm{A^{1/2}f}_{L_2(\mu)}^2,\qquad f\in\Dom(A^{1/2})_+.
\]
This equivalence makes the LSI a natural route to hypercontractivity.

Hypercontractivity has been extensively studied for semigroups on manifolds. Weissler~\cite{MR578933} proved optimal hypercontractivity for the heat and Poisson semigroups on the circle $\mathbb S^1$, while Rothaus~\cite{MR593787} independently treated the heat case. For related results on the sphere $\mathbb S^n$, see~\cite{MR674060,MR706641,MR1164616,MR4278128}.

In discrete settings, the absence of a differential chain rule makes sharp estimates particularly delicate. For the uniform two-point space and its product cubes, the work of Bonami~\cite{MR283496}, Nelson~\cite{MR343816}, Gross~\cite{MR420249}, and Beckner~\cite{MR385456} establishes the optimal time $t_{p,q}^{\mathrm{opt}}=\tfrac12\log\bigl(\tfrac{q-1}{p-1}\bigr)$. For the finite cyclic group $\mathbb Z_n$, the case $n=4$ follows from~\cite{MR730056} and Gross' equivalence~\cite{MR420249}; earlier partial results include~\cite{MR1758914, MR1883499, MR2314078, MR3709719}. More recently, Yao~\cite{yao2025optimalhypercontractivitylogsobolevinequalities} first proved the optimal time for $n=2^k$ and $n=3\cdot 2^k$ with $k\ge1$. The optimal time for $n=3$ and general $p,q$ was determined by Cao, Fan, Han, Qiu, and Wang~\cite{cao2026optimalhypercontractiveconstantsmathbbz3}. Yao~\cite{yao2026completesolutionoptimalhypercontractivity} and, independently, Xie and Zhang~\cite{xie2026sharplogsobolevinequalitiesfinite} established the LSI with constant $2$ for every $n\ge4$. Xie and Zhang combine third-moment estimates with a cubic majorant for entropy introduced by Frank and Ivanisvili~\cite{FRANK2026111159}.

The hypercontractivity problem also arises in noncommutative $L_p$ spaces. Earlier work treats free group von Neumann algebras and free products~\cite{MR3709719,MR3474461,MR3377067}, as well as partial results for finite-dimensional matrix algebras~\cite{MR3390901}. Xie and Zhang~\cite{xiezhang2026hypercontractivityfreegroup} recently established the optimal hypercontractive time for the word-length Poisson semigroup on free groups and supplied further examples through a criterion involving scalar third moments. Zhang~\cite{zhang2026sharphypercontractivityfreeorthogonal} proved sharp hypercontractivity for normalized heat semigroups on free orthogonal quantum groups of Kac type using related scalar third-moment estimates.

Our main results concern crossed products, which include quantum tori and full matrix algebras. Let $(\mathcal N,\tau_{\mathcal N})$ be a finite von Neumann algebra with a faithful normal tracial state. For a positive element $h$ in a finite tracial algebra $(\mathcal M,\tau)$ and a nonnegative self-adjoint generator $T$, write
\[
\Ent_\tau(h)=\tau(h\log h)-\tau(h)\log\tau(h),\qquad \mathcal E_T(x)=\norm{T^{1/2}x}_2^2.
\]
We use the convention $0\log0=0$ and define $\Ent_\tau$ on $L_1(\mathcal M,\tau)_+$ with values in $[0,\infty]$. It is lower semicontinuous for $L_1$ convergence.

Let $B\ge0$ be a symmetric completely Markov generator on $\mathcal N$, and let $C_B<\infty$ be the optimal constant in
\begin{equation}\label{Ineq: Base LSI}
\Ent_{\tau_{\mathcal N}}(a^2)\leq C_B\mathcal E_B(a), \qquad a\in\Dom(B^{1/2})_+.
\end{equation}
Suppose that a normal trace-preserving automorphism $\vartheta$ satisfies
\begin{equation}\label{Eqn: Covariance}
\vartheta\Dom(B)=\Dom(B),\qquad B\vartheta=\vartheta B \quad\hbox{on }\Dom(B).
\end{equation}
Let $G=\mathbb Z$ or $G=\mathbb Z_n$ with $n=2$ or $n\ge4$, and assume $\vartheta^n=\id$ in the finite case. We use the standard word length: $\ell(k)=|k|$ on $\mathbb Z$ and $\ell(k)=\ell_n(k)=\min\{k,n-k\}$ for $0\le k<n$ on $\mathbb Z_n$. On $\mathcal M=\mathcal N\rtimes_\vartheta G$, with implementing unitary $U$ and canonical trace $\tau$, define
\begin{equation}\label{Eqn: Crossed generator}
\widehat B(U^ka)=U^kBa,\qquad \mathsf N(U^ka)=\ell(k)U^ka,\qquad A=\widehat B+\mathsf N.
\end{equation}

Our proof combines semigroup interpolation with spectral calculus and cubic estimates. We use the word-length generator $L=\mathsf N$ for the interpolation. For a bounded positive Fourier polynomial $x=\sum_kU^ka_k$, set
\[
x_t=e^{-t\mathsf N}x=\sum_ke^{-t\ell(k)}U^ka_k.
\]
As $t\to\infty$, $x_t$ converges in operator norm to $a_0\in\mathcal N$, the fixed-point algebra of $e^{-t\mathsf N}$. The diagonal action of $\mathsf N$ on the Fourier coefficients allows us to combine spectral calculus with estimates of $\operatorname{Re}\inner{Lx,x^2}_2$ and obtain
\[
\Ent_\tau(x^2)-\sum_k\Ent_{\tau_{\mathcal N}}(|a_k|^2)\le2\mathcal E_{\mathsf N}(x).
\]
Applying the LSI on $\mathcal N$ to the entropy term $\Ent_{\tau_{\mathcal N}}(|a_k|^2)$ yields the inequality below for $A=\widehat B+\mathsf N$. The passage from positive inputs in \eqref{Ineq: Base LSI} to arbitrary $a\in \Dom(B^{1/2})$ is justified in Section~\ref{Sec: Compatible Markov semigroup}. Restrictions to $\mathcal N$ and the group algebra $\mathcal L(G)$ establish the optimal constant.

\begin{theorem}[Sharp LSI on crossed product]\label{Thm: Sharp crossed product LSI}
Let $(\mathcal N,\tau_{\mathcal N})$ be a finite von Neumann algebra with a faithful normal tracial state, and let $G=\mathbb Z$ or $G=\mathbb Z_n$ with $n=2$ or $n\ge4$. Suppose that $B$ is a symmetric completely Markov generator on $\mathcal N$ with optimal LSI constant $C_B<\infty$, and that the normal trace-preserving automorphism $\vartheta$ satisfies~\eqref{Eqn: Covariance}, with $\vartheta^n=\id$ in the finite case. Let $A$ be the generator on $\mathcal M=\mathcal N\rtimes_\vartheta G$ determined by~\eqref{Eqn: Crossed generator}. Then
\begin{equation}\label{Ineq: Crossed LSI with cyclic group}
\Ent_\tau(x^2)\le C_B\mathcal E_{\widehat B}(x)+2\mathcal E_{\mathsf N}(x),\qquad x\in\Dom(A^{1/2})_+.
\end{equation}
In particular, the optimal LSI constant of $A$ is $\max\{C_B,2\}$.
\end{theorem}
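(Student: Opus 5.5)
Following the strategy described in the introduction, we split the argument into an interpolation estimate for $\mathsf N$ alone and an application of \eqref{Ineq: Base LSI} on each Fourier coefficient. By density and lower semicontinuity of $\Ent_\tau$, it suffices to treat bounded positive Fourier polynomials $x=\sum_kU^ka_k$ with $a_k\in\Dom(B)$. The approximation keeps $\mathcal E_{\widehat B}$ and $\mathcal E_{\mathsf N}$ under control because $\widehat B$ and $\mathsf N$ commute and act on Fourier coefficients. The core claim is
\[
\Ent_\tau(x^2)-\sum_k\Ent_{\tau_{\mathcal N}}(|a_k|^2)\le 2\mathcal E_{\mathsf N}(x).
\]
Once we have it, we apply \eqref{Ineq: Base LSI} to $|a_k|$ (and, via the extension in Section~\ref{Sec: Compatible Markov semigroup}, to arbitrary $a_k\in\Dom(B^{1/2})$). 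This gives $\Ent_{\tau_{\mathcal N}}(|a_k|^2)\le C_B\mathcal E_B(a_k)$. By \eqref{Eqn: Covariance} the crossed-product structure gives $\mathcal E_{\widehat B}(x)=\sum_k\mathcal E_B(a_k)$, and \eqref{Ineq: Crossed LSI with cyclic group} follows.

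To prove the core claim, we interpolate along $x_t=e^{-t\mathsf N}x$, with $x_t\to a_0$ in norm. Write $\Phi(t)=\Ent_\tau(x_t^2)$; note that $x_t$ stays positive because $e^{-t\mathsf N}$ is completely positive. Then
\[
-\Phi'(t)=2\operatorname{Re}\tau\bigl((\mathsf N x_t)\,x_t\log x_t^2\bigr)+\text{(normalization term)},
\]
and $\Ent_\tau(x^2)-\Ent_{\tau_{\mathcal N}}(a_0^2)=\int_0^\infty(-\Phi'(t))\,dt$. Meanwhile $2\mathcal E_{\mathsf N}(x)=2\sum_k\ell(k)\|a_k\|_2^2$, and $\sum_{k\ne0}\Ent(|a_k|^2)$ is the deficit we are allowed to spend. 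The plan is to use spectral calculus on $x_t$ to majorize $x\log x^2$ by cubic and quadratic expressions, following the Frank--Ivanisvili-type cubic majorant. This reduces $-\Phi'(t)$ to a combination of $\inner{\mathsf N x_t,x_t}_2$ and $\operatorname{Re}\inner{\mathsf N x_t,x_t^2}_2$. Because $\mathsf N$ is diagonal, $\inner{\mathsf N x_t,x_t}=\sum_k\ell(k)e^{-2t\ell(k)}\|a_k\|_2^2$, which integrates to $\tfrac12\sum_k\|a_k\|_2^2$ per unit of $\ell$. The cubic term
\[
\operatorname{Re}\inner{\mathsf N x_t,x_t^2}_2=\sum_{j+k+m=0}\ell(j)\,e^{-t(\ell(j)+\ell(k)+\ell(m))}\operatorname{Re}\tau(U^ja_jU^ka_kU^ma_m)
\]
is where the group structure enters. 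The triangle inequality $\ell(j)\le\ell(k)+\ell(m)$ lets us symmetrize, so that $\ell(j)+\ell(k)+\ell(m)\ge2\max\ell$. We then aim to bound the cubic contribution by the ``free'' quadratic contribution plus terms whose integral is dominated by the per-coefficient entropies $\Ent(|a_k|^2)$.

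This cubic/third-moment estimate on $\mathbb Z$ and $\mathbb Z_n$ is where I expect the main obstacle. On $\mathbb Z_3$ the triangle inequality degenerates ($\ell(1)+\ell(1)=\ell(2)$ wraps around with equality in a bad way), which explains the exclusion of $n=3$. For $n\ge4$ one must check that the wrap-around triples $j+k+m\equiv0 \pmod n$ with $|j|+|k|+|m|=n$ still carry enough decay. I would try first to pair each triple with its adjoint, using positivity of $x$ and traciality, and then apply Cauchy--Schwarz in the form $|\tau(U^ja_jU^ka_kU^ma_m)|\le\|a_j\|_2\|a_k\|_2\|a_m\|_\infty$. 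The $\|\cdot\|_\infty$ factors would be absorbed via the spectral-calculus majorant.

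Finally, for optimality we restrict to $x\in\mathcal N$, where $\mathcal E_{\mathsf N}=0$ and $A=B$, which forces the constant to be at least $C_B$. Restricting to $x\in\mathcal L(G)$ gives $\mathcal E_{\widehat B}=0$, and there the constant $2$ is sharp for the word-length generator. This is seen by linearizing at $x=1+\varepsilon(U+U^*)$, which gives $\Ent\approx 4\varepsilon^2$ against $\mathcal E_{\mathsf N}=2\varepsilon^2$. Hence the optimal constant is $\max\{C_B,2\}$.
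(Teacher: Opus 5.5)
Your outer architecture is the paper's. You reduce to bounded positive Fourier polynomials, prove the coefficient-entropy inequality $\Ent_\tau(x^2)\le\sum_k\Ent_{\tau_{\mathcal N}}(|a_k|^2)+2\mathcal E_{\mathsf N}(x)$ along $x_t=e^{-t\mathsf N}x$, apply \eqref{Ineq: Base LSI for general element} coefficientwise, and test optimality on $\mathcal N$ and on $\mathbf 1+\varepsilon(U+U^*)$. The gap is in that core inequality, which carries all the content, and your route to it cannot work.

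\textbf{Why the plan fails.} You bound the logarithmic term through the unfiltered pairing $\operatorname{Re}\inner{\mathsf Nx_t,x_t^2}_2$ and only afterwards try to spend the coefficient entropies. From $\mathcal E_{\mathsf N}(x)$ and this pairing alone, the sharpest bound spectral calculus yields is Jensen's: $\operatorname{Re}\inner{\mathsf Nx,x\log x}_2\le\mathcal E_{\mathsf N}(x)(1+\log z_0)$ with $z_0=\operatorname{Re}\inner{\mathsf Nx,x^2}_2/(2\mathcal E_{\mathsf N}(x))$. You would then need
\[
\mathcal E_{\mathsf N}(x)\Bigl(\kappa-1-\log\frac{z_0}{\norm x_2}\Bigr)+\frac12\sum_k\ell(k)\Ent_{\tau_{\mathcal N}}(|a_k|^2)\ge0,\qquad\kappa=\frac{\norm{\mathsf Nx}_2^2}{\mathcal E_{\mathsf N}(x)}.
\]
This fails even in a commutative example. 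Take $G=\mathbb Z$, $\mathcal N=L_\infty[0,1]$, $\vartheta=\mathrm{id}$, and $f=\mathbf 1+Mp$ with $\tau_{\mathcal N}(p)=(M(\log M)^2)^{-1}$. Put $x=f+\tfrac12f^{1/2}(U+U^*)\ge f-f^{1/2}\ge0$.
\begin{itemize}
\item Then $\kappa=1$, and the entropy term is $\frac14\Ent_{\tau_{\mathcal N}}(f)\to0$ as $M\to\infty$.
\item But $z_0=\tau(f^2)/\tau(f)$ while $\norm x_2\approx\tau(f^2)^{1/2}\to\infty$, so the left side tends to $-\infty$.
\item This holds uniformly along $x_t$, because $z_0$ does not depend on $t$.
\end{itemize}

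\textbf{Other steps that do not work.} The same example shows your H\"older step with $\norm{a_m}_\infty$ cannot be absorbed. Any upper bound for the unfiltered cubic is already too large: $\norm f_\infty/\norm f_2\to\infty$ while the entropies of $a_{\pm1}$ vanish, and $a_0$ contributes no entropy at all since $\ell(0)=0$. A Frank--Ivanisvili majorant is a scalar third-moment device matched to the averaging generator $L_0$ of Remark~\ref{Rmk: Choice of interpolation}, and it produces no coefficient entropies. Your bookkeeping is also off. The time integral that reproduces $2\mathcal E_{\mathsf N}(x)$ is $\int_0^\infty4\norm{\mathsf Nx_t}_2^2\,\dd t$, not an integral of $\inner{\mathsf Nx_t,x_t}_2$. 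The slack that must pay for the cubic term is $\kappa-1$.

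\textbf{The missing idea.} What is missing is the paper's mechanism for inserting the coefficient entropies \emph{before} the cubic moment is formed.
\begin{itemize}
\item On $L_2(\widehat G,\nu;L_2(\mathcal M))$ with $v_x=\delta x$, $(\delta x)(\theta)=x-\alpha_\theta(x)$, the weighted entropy sum equals $\inner{v_x,\log C_x\,v_x}_{\nu,2}$. Here $C_x$ is right multiplication by $|a_k|/\norm{a_k}_2$ on the $k$th Fourier block (Lemma~\ref{Lem: Crossed coefficient representation}).
\item Divided differences and concavity give $\operatorname{Re}\inner{\mathsf Nx,x\log x}_2\le\mathcal E_{\mathsf N}(x)+\inner{v_x,\log T_x\,v_x}_{\nu,2}$, where $T_x$ is the midpoint of left multiplication by $x$ and right multiplication by $\alpha_\theta(x)$.
\item Operator convexity of $s\mapsto s^{-1}$ in the resolvent formula for $\log C_x-\log(T_x/z)$ reduces everything to the filtered vectors $(C_x+rI)^{-1}v_x=\delta w_r$, with $P_kw_r=U^ka_k(|a_k|/\norm{a_k}_2+r\mathbf 1)^{-1}$.
\item The identity $\int_0^\infty(P_kw_r)^*P_kw_r\,\dd r=\norm{a_k}_2|a_k|$ and row--column H\"older leave only $L_2$ norms. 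This gives $z\le\norm{a_0}_2+\inner{\mathsf N_Gu,u^2}_2/(2\mathcal E_{\mathsf N}(x))$ with $u=\sum_{k\ne0}\norm{a_k}_2\chi_k$; this replacement of $a_k$ by $\norm{a_k}_2$ is exactly what the entropies pay for.
\item You then need an explicit scalar bound paid for by $\kappa$ alone, Lemma~\ref{Lem: Cyclic weighted cubic bound}: $|\inner{\mathsf N_Gu,u^2}_2|^2\le4\norm u_2^2(\norm{\mathsf N_Gu}_2^4-\inner{u,\mathsf N_Gu}_2^2)$. Your proposal leaves this as the unnamed main obstacle.
\end{itemize}

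\textbf{The case $n=3$.} Your explanation for excluding $n=3$ is wrong, since in $\mathbb Z_3$ one has $\ell(2)=1$. The obstruction is the wrap-around triple $(1,1,1)$: $u=\chi_1+\chi_2$ satisfies $\mathsf N_Gu=u$ but $\inner{\mathsf N_Gu,u^2}_2=2\neq0$. Indeed $f=(2,1,1)$ gives the LSI ratio $3\log2>2$ on $\mathbb Z_3$.
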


The one-dimensional result can be iterated along the coordinates of $\mathbb Z^d$. Let $d\ge1$, and let $\vartheta_1,\ldots,\vartheta_d$ be pairwise commuting normal trace-preserving automorphisms of $\mathcal N$. Assume, for every $1\le j\le d$, that
\begin{equation}\label{Eqn: Pairwise Covariance}
\vartheta_j\Dom(B)=\Dom(B),\qquad B\vartheta_j=\vartheta_jB\quad\text{on }\Dom(B).
\end{equation}
Let $\vartheta$ be the resulting action of $\mathbb Z^d$, let $\sigma$ be a normalized scalar $2$-cocycle, and equip $\mathcal M=\mathcal N\rtimes_{\vartheta,\sigma}\mathbb Z^d$ with its canonical trace. We use the word length associated with the standard coordinate generators, $|k|_1=\sum_{j=1}^d|k_j|$, and define
\begin{equation}\label{Eqn: Twisted Crossed generator}
\widehat B(u_ka)=u_kBa,\qquad \mathsf N(u_ka)=|k|_1u_ka,\qquad A=\widehat B+\mathsf N.
\end{equation}
Iteration of Theorem~\ref{Thm: Sharp crossed product LSI} preserves the two coefficients separately, even in the presence of a scalar cocycle.

\begin{corollary}\label{Cor: Zd crossed product LSI}
Let $(\mathcal N,\tau_{\mathcal N})$ be a finite von Neumann algebra with a faithful normal tracial state, and let $B$ be a symmetric completely Markov generator with optimal LSI constant $C_B<\infty$. Let $\vartheta$ be an action of $\mathbb Z^d$, $d\ge1$, by normal trace-preserving automorphisms satisfying~\eqref{Eqn: Pairwise Covariance}, and let $\sigma$ be a normalized scalar $2$-cocycle. The generator $A$ on $\mathcal M=\mathcal N\rtimes_{\vartheta,\sigma}\mathbb Z^d$, determined by~\eqref{Eqn: Twisted Crossed generator}, satisfies
\begin{equation}\label{Ineq: Zd crossed LSI}
\Ent_\tau(x^2)\leq C_B\mathcal E_{\widehat B}(x)+2\mathcal E_{\mathsf N}(x),\qquad x\in\Dom(A^{1/2})_+.
\end{equation}
In particular, the optimal LSI constant of $A$ is $\max\{C_B,2\}$.
\end{corollary}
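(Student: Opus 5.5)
We argue by induction on $d$, applying Theorem~\ref{Thm: Sharp crossed product LSI} once per coordinate. The plan is to write the twisted crossed product as an iterated crossed product by $\mathbb Z$. For $0\le j\le d$ let $\mathcal M_j$ be the von Neumann subalgebra of $\mathcal M$ generated by $\mathcal N$ and $u_{e_1},\dots,u_{e_j}$, with $\mathcal M_0=\mathcal N$ and $\mathcal M_d=\mathcal M$. Since $\sigma$ is a normalized scalar $2$-cocycle, $u_{e_{j+1}}$ normalizes $\mathcal M_j$. Conjugation by $u_{e_{j+1}}$ therefore defines a trace-preserving automorphism $\theta_{j+1}$ of $\mathcal M_j$ with
\[
\theta_{j+1}(u_ka)=\lambda_j(k)\,u_k\vartheta_{j+1}(a),\qquad k\in\mathbb Z^j\times\{0\}^{d-j},
\]
for a unimodular scalar $\lambda_j(k)$ built from values of $\sigma$. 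We also need the identification $\mathcal M_{j+1}\cong\mathcal M_j\rtimes_{\theta_{j+1}}\mathbb Z$, trace-preservingly. This holds because the canonical trace kills every $u_k$ with $k_{j+1}\neq0$, so the Fourier coefficients along the $(j+1)$-th coordinate are orthogonal and the canonical conditional expectation matches. Each $u_k$ equals an ordered product $u_{e_1}^{k_1}\cdots u_{e_d}^{k_d}$ up to a scalar phase, so Fourier expansions in $\mathcal M_{j+1}$ are expansions in powers of $U=u_{e_{j+1}}$ with coefficients in $\mathcal M_j$.

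On $\mathcal M_j$ define $B_j=\widehat B+\mathsf N_j$, where
\[
\widehat B(u_ka)=u_kBa,\qquad \mathsf N_j(u_ka)=(|k_1|+\dots+|k_j|)\,u_ka.
\]
The inductive hypothesis is
\[
\Ent(x^2)\le C_B\mathcal E_{\widehat B}(x)+2\mathcal E_{\mathsf N_j}(x),\qquad x\in\Dom(B_j^{1/2})_+ ,
\]
and the case $j=0$ is exactly \eqref{Ineq: Base LSI}. For the inductive step we apply Theorem~\ref{Thm: Sharp crossed product LSI} with base algebra $\mathcal M_j$, generator $B_j$ and automorphism $\theta_{j+1}$, which requires the following checks.
\begin{enumerate}
\item $B_j$ is a symmetric completely Markov generator. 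It is the sum of two commuting generators: $\widehat B$ is $B$ tensored over the group coordinates, and $\mathsf N_j$ is a sum of commuting word-length Fourier multipliers. Complete positivity of $e^{-t\mathsf N_j}$ holds because $e^{-t|k|}$ is positive definite on $\mathbb Z$, and this transfers to twisted crossed products.
\item $\theta_{j+1}$ satisfies the covariance condition \eqref{Eqn: Covariance}. This follows from the displayed formula: the phase $\lambda_j(k)$ does not change $|k|$, and $\vartheta_{j+1}$ commutes with $B$ by \eqref{Eqn: Pairwise Covariance}.
\item The generator produced by the theorem is $B_{j+1}$. One checks $\widehat{B_j}+\mathsf N=B_{j+1}$ on Fourier polynomials, and both operators are diagonal in the same decomposition, so they are equal as self-adjoint operators.
\end{enumerate}

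The step needing the most care is the constant bookkeeping. Theorem~\ref{Thm: Sharp crossed product LSI} only supplies $C_{B_j}\mathcal E_{\widehat{B_j}}+2\mathcal E_{\mathsf N}$ with $C_{B_j}=\max\{C_B,2\}$, and this would collapse the two coefficients. To avoid that, we use the intermediate inequality from the proof of the theorem,
\[
\Ent_\tau(x^2)-\sum_m\Ent(|a_m|^2)\le2\mathcal E_{\mathsf N}(x),
\]
where $x=\sum_mU^ma_m$ with $a_m\in\mathcal M_j$. Applying the inductive hypothesis to $|a_m|$ bounds each $\Ent(|a_m|^2)$ by $C_B\mathcal E_{\widehat B}(a_m)+2\mathcal E_{\mathsf N_j}(a_m)$. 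The Section~\ref{Sec: Compatible Markov semigroup} argument passes from $|a_m|$ back to $a_m$, which needs both Dirichlet forms to contract under $a\mapsto|a|$. Summing over $m$, orthogonality of the Fourier pieces together with invariance of $\mathcal E_{\widehat B}$ and $\mathcal E_{\mathsf N_j}$ under left multiplication by $U^m$ (covariance again) gives
\[
\Ent_\tau(x^2)\le C_B\mathcal E_{\widehat B}(x)+2\mathcal E_{\mathsf N_j}(x)+2\mathcal E_{\mathsf N}(x),
\]
and $\mathcal E_{\mathsf N_j}+\mathcal E_{\mathsf N}=\mathcal E_{\mathsf N_{j+1}}$. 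If that intermediate inequality is not available as a standalone statement, an alternative is to run the induction with the modified base generator $(C_B/2)B$ and reinterpret the coefficients.

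The inequality then extends from bounded positive Fourier polynomials to all of $\Dom(A^{1/2})_+$ by lower semicontinuity of $\Ent_\tau$ and a core argument.

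For optimality, restricting \eqref{Ineq: Zd crossed LSI} to $\mathcal N$ (where $\mathsf N=0$) shows the constant must be at least $C_B$. Restricting to the subalgebra generated by $u_{e_1}$, which is a copy of $L_\infty(\mathbb S^1)$, gives the optimal constant $2$ for the Poisson semigroup there. Hence the optimal LSI constant of $A$ is $\max\{C_B,2\}$.
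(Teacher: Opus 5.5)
Your proposal is correct and uses the same strategy as the paper: realize $\mathcal M$ as an iterated crossed product $\mathcal M_{j}\cong\mathcal M_{j-1}\rtimes\mathbb Z$ and apply the one-dimensional result once per coordinate. It differs from the paper in two places.

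First, you identify $\mathcal M_{j+1}$ with an untwisted crossed product directly, using the powers of the single unitary $u_{e_{j+1}}$ and the vanishing of $\tau$ on nonzero Fourier modes. The paper instead renormalizes the cocycle to $e^{\pi i k^{\mathrm T}\Theta\ell}$, so that it is trivial on each $\mathbb Z e_j$, and then invokes B\'edos's decomposition (Theorem~\ref{Thm: Bedos decomposition}). Your version needs no cocycle normalization, because powers of one unitary automatically give an honest $\mathbb Z$-action.

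Second, the constant bookkeeping differs. Your main route reopens the proof of Theorem~\ref{Thm: Sharp crossed product LSI} and uses the coefficient-entropy inequality \eqref{Ineq: Crossed coefficient entropy}, which does not involve $B$. You combine it with the inductive hypothesis applied coefficientwise and the $|a|$/$|a^*|$ averaging for $\widehat B$ and $\mathsf N_j$ separately. The paper does exactly your fallback. It rescales to $B'=\frac{C_B}{2}B$, so the base constant is $2$ and $\max\{2,2\}=2$ is preserved, and then uses the theorem as a black box. This forces a separate case $C_B=0$, which the paper handles by showing $\mathcal N=\mathbb C\mathbf 1$. Your route treats $C_B=0$ uniformly and keeps the two coefficients separate by construction. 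The cost is that you repeat the averaging and density arguments at each step.

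On that last point, the inductive hypothesis is applied to the coefficients $|a_m|$. These are arbitrary bounded positive elements of $\Dom(B_j^{1/2})$, not Fourier polynomials. So the extension from polynomials to the full form domain must be carried out inside each inductive step, not only once at the end. The density argument at the end of the proof of Theorem~\ref{Thm: Sharp crossed product LSI} does this verbatim.
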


For a real skew-symmetric $d\times d$ matrix $\Theta$, let $\mathbb T_\Theta^d$ denote the associated quantum torus, with canonical Fourier unitaries $W_m$ and trace $\tau_\Theta$. Taking $\mathcal N=\mathbb C$, $B=0$, and $\sigma_\Theta(k,\ell)=e^{\pi i k^{\mathrm T}\Theta\ell}$ in Corollary~\ref{Cor: Zd crossed product LSI} yields the following result.

\begin{corollary}[Sharp LSI on quantum tori]\label{Thm: Sharp LSI on quantum tori}
For every integer $d\geq1$ and every real skew-symmetric $d\times d$ matrix $\Theta$, the word-length generator $AW_m=|m|_1W_m$ on the quantum torus $\mathbb T_\Theta^d$ satisfies
\[
\Ent_{\tau_\Theta}(x^2)\leq2\norm{A^{1/2}x}_2^2, \qquad x\in\Dom(A^{1/2})_+.
\]
The constant $2$ is optimal.
\end{corollary}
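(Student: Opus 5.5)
The plan is to specialize Corollary~\ref{Cor: Zd crossed product LSI} to the trivial base algebra. We take $\mathcal N=\mathbb C$ with $\tau_{\mathcal N}=\id$ and $B=0$; this is trivially symmetric and completely Markov. Every positive $a\in\mathbb C$ gives $\Ent_{\tau_{\mathcal N}}(a^2)=a^2\log a^2-a^2\log a^2=0$, so $C_B=0<\infty$. The action $\vartheta$ of $\mathbb Z^d$ is trivial, so the covariance condition~\eqref{Eqn: Pairwise Covariance} holds vacuously. For the cocycle we choose $\sigma_\Theta(k,\ell)=e^{\pi i k^{\mathrm T}\Theta\ell}$.

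First we check that $\sigma_\Theta$ is a normalized scalar $2$-cocycle. Bilinearity of $(k,\ell)\mapsto k^{\mathrm T}\Theta\ell$ gives $\sigma(k,\ell)\sigma(k+\ell,m)=\sigma(k,\ell+m)\sigma(\ell,m)$, and clearly $\sigma(0,k)=\sigma(k,0)=1$.

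Next we identify $\mathbb C\rtimes_{\mathrm{triv},\sigma_\Theta}\mathbb Z^d$ with the quantum torus $\mathbb T^d_\Theta$. The twisted unitaries satisfy $u_ku_\ell=\sigma_\Theta(k,\ell)u_{k+\ell}$. Therefore
\[
u_ku_\ell=e^{2\pi i k^{\mathrm T}\Theta\ell}u_\ell u_k,
\]
which uses the skew-symmetry of $\Theta$. With $W_m=u_m$, this is the defining relation of $\mathbb T^d_\Theta$, up to the paper's normalization of the Fourier unitaries. The canonical trace $\tau(u_k)=\delta_{k,0}$ matches $\tau_\Theta$. Since $B=0$, we have $\widehat B=0$, so the generator in~\eqref{Eqn: Twisted Crossed generator} reduces to $A=\mathsf N$, i.e.\ $AW_m=|m|_1W_m$, with the same domain. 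Inequality~\eqref{Ineq: Zd crossed LSI} then reads $\Ent_{\tau_\Theta}(x^2)\le 2\norm{A^{1/2}x}_2^2$.

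For optimality, the corollary already gives the optimal constant $\max\{C_B,2\}=\max\{0,2\}=2$. To make this self-contained, we restrict to the commutative subalgebra generated by $W_{e_1}$, which is $L_\infty(\mathbb T)$ carrying the Poisson generator $|k|$. There we use the test functions $x=1+\varepsilon(W_{e_1}+W_{e_1}^*)$, which are positive for small $\varepsilon$. A second-order expansion gives
\[
\Ent(x^2)=4\varepsilon^2\cdot\tau\bigl((W_{e_1}+W_{e_1}^*)^2\bigr)/2+O(\varepsilon^3)=4\varepsilon^2+O(\varepsilon^3),
\]
while $\norm{A^{1/2}x}_2^2=2\varepsilon^2$. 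Hence no constant below $2$ can work.

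The main obstacle is bookkeeping rather than analysis. We must make sure the paper's convention for $W_m$ and its cocycle matches $\sigma_\Theta$, possibly after replacing $W_m$ by $e^{i\phi(m)}u_m$; such a rephasing does not change $|m|_1$ or the trace. We must also confirm that the corollary's domain $\Dom(A^{1/2})_+$ coincides with the one for the quantum torus.
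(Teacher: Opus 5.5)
Your proposal is correct and follows the paper's own proof. The paper also specializes Corollary~\ref{Cor: Zd crossed product LSI} to $\mathcal N=\mathbb C$, $B=0$, the trivial action and the cocycle $\sigma_\Theta(m,\ell)=e^{\pi i m^{\mathrm T}\Theta\ell}$, and reads off the optimal constant $\max\{C_B,2\}=\max\{0,2\}=2$; your added checks (the cocycle identity, the commutation relation, and the test element $\mathbf 1+\varepsilon(W_{e_1}+W_{e_1}^*)$ with $\Ent_{\tau_\Theta}(x^2)\big/\mathcal E_A(x)\to 2$) are accurate and only make explicit what that corollary already supplies.
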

Khalkhali and Sadeghi~\cite[Theorem~3.2]{MR3684526} proved the corresponding ordinary LSI for positive smooth elements of a two-dimensional quantum torus whose Fourier support lies on a line. Junge, Palazuelos, Parcet and Perrin~\cite[Section~3.3]{MR3709719} obtained sharp $L_2\to L_q$ estimates for even integers $q$ by comparison with the classical torus. For the same word-length generator, Brannan, Gao and Junge~\cite[Corollary~6.4(iii)]{MR4649063} proved a complete modified LSI, which is distinct from the ordinary LSI considered here.\par

For matrix algebra $M_n(\mathbb{C})$ with $n=2$ or $n\geq 4$, take $\mathcal N=\ell_\infty(\mathbb Z_n)\cong\mathcal D_n$, the diagonal algebra in $M_n(\mathbb C)$, and let $\vartheta$ be the cyclic shift. On the standard basis $(e_j)_{j\in\mathbb Z_n}$, put
\[
Ue_j=e_{j+1},\qquad Ve_j=\omega^je_j,\qquad \omega=e^{2\pi i/n}.
\]
Then $UVU^*=\omega^{-1}V$, $\mathcal D_n=W^*(V)$, and the generator on $\mathcal N$ is $B_n(V^\ell)=\ell_n(\ell)V^\ell$. The realization $\mathcal D_n\rtimes_\vartheta\mathbb Z_n\cong M_n(\mathbb C)$ has canonical trace $\operatorname{tr}_n=n^{-1}\operatorname{Tr}$ and orthonormal Weyl basis $(U^kV^\ell)_{k,\ell\in\mathbb Z_n}$. Applying Theorem~\ref{Thm: Sharp crossed product LSI} with this choice of $\mathcal N$ and $B_n$ yields the sharp inequality for the resulting semigroup.
\begin{corollary}[Sharp LSI on matrix algebra]\label{Thm: Sharp LSI on matrix algebra}
For $n=2$ or $n\ge4$, define the Weyl word-length generator $A_n$ on $M_n(\mathbb C)$ by
\[
A_n(U^kV^\ell)=\bigl(\ell_n(k)+\ell_n(\ell)\bigr)U^kV^\ell.
\]
Then
\[
\Ent_{\operatorname{tr}_n}(x^2)\leq2\norm{A_n^{1/2}x}_2^2, \qquad x\in M_n(\mathbb C)_+,
\]
and the constant $2$ is optimal.
\end{corollary}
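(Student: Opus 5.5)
The plan is to apply Theorem~\ref{Thm: Sharp crossed product LSI} with $\mathcal N=\ell_\infty(\mathbb Z_n)\cong\mathcal D_n=W^*(V)$, $G=\mathbb Z_n$ and $\vartheta=\operatorname{Ad}U$ the cyclic shift. For this one has to confirm three things: the crossed-product identification, the hypotheses of the theorem for $B_n$, and the value of $C_{B_n}$. Since the theorem returns the constant $\max\{C_{B_n},2\}$, the corollary follows once $C_{B_n}=2$.

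\emph{Step 1: identification.} The matrices $U^kV^\ell$ satisfy $\operatorname{tr}_n(U^kV^\ell)=\delta_{k,0}\delta_{\ell,0}$ for $k,\ell\in\mathbb Z_n$. Hence they form an orthonormal basis of $(M_n(\mathbb C),\operatorname{tr}_n)$, which has dimension $n^2$. The covariance relation $UVU^*=\omega^{-1}V$ shows that $\vartheta(a)=UaU^*$ preserves $\mathcal D_n$, is trace preserving, and satisfies $\vartheta^n=\id$. The universal property of the crossed product then gives a trace-preserving $*$-isomorphism $\mathcal D_n\rtimes_\vartheta\mathbb Z_n\to M_n(\mathbb C)$ sending $U^ka\mapsto U^ka$. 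It is injective because it is isometric on $L_2$, and surjective by counting dimensions. Under this map, $\widehat{B_n}+\mathsf N$ acts on $U^kV^\ell$ by multiplication by $\ell_n(\ell)+\ell_n(k)$, so it equals $A_n$.

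\emph{Step 2: hypotheses on $B_n$.} The diagonal algebra is $\mathcal D_n\cong\ell_\infty(\mathbb Z_n)$ via Fourier transform: $V^\ell$ corresponds to the character $j\mapsto\omega^{j\ell}$. Under this correspondence $e^{-tB_n}$ becomes convolution on the dual group $\mathbb Z_n$ with the positive-definite function $\ell\mapsto e^{-t\ell_n(\ell)}$. Its convolution kernel is the product over the two directions of Poisson-type kernels on the cycle. Concretely, $e^{-t\ell_n}$ is the Fourier transform of a probability measure on $\mathbb Z_n$, because $\ell_n$ is conditionally negative definite as the word length on a cyclic group. So the semigroup is a symmetric Markov semigroup on the commutative algebra, and it is completely Markov automatically. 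Covariance means $B_n\vartheta=\vartheta B_n$. This holds because $\vartheta(V^\ell)=\omega^{-\ell}V^\ell$ is diagonal in the same basis as $B_n$. The domain condition is trivial in finite dimensions.

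\emph{Step 3: $C_{B_n}=2$.} This is the main obstacle, although the needed result is available. Under the identification above, $(\mathcal D_n,B_n)$ is exactly the word-length generator on $\mathcal L(\mathbb Z_n)\cong\ell_\infty(\widehat{\mathbb Z_n})$. This is the classical finite-cyclic-group setting, where the LSI with constant $2$ holds for $n=2$ and $n\ge4$ by the cited results (the two-point space for $n=2$, and \cite{yao2026completesolutionoptimalhypercontractivity,xie2026sharplogsobolevinequalitiesfinite} for $n\ge4$). Alternatively, one can extract it from Theorem~\ref{Thm: Sharp crossed product LSI} itself. Take $\mathcal N=\mathbb C$, $B=0$, $C_B=0$. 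Then $\mathcal M=\mathcal L(\mathbb Z_n)$ with $A=\mathsf N$, and the theorem gives $\Ent\le2\mathcal E_{\mathsf N}$ with optimal constant $\max\{0,2\}=2$. In the theorem's formulation $C_B<\infty$, and the degenerate value $C_B=0$ is allowed for $\mathcal N=\mathbb C$, since entropy vanishes there. Either way $C_{B_n}=2$, and Theorem~\ref{Thm: Sharp crossed product LSI} gives
\[
\Ent_{\operatorname{tr}_n}(x^2)\le2\mathcal E_{\widehat{B_n}}(x)+2\mathcal E_{\mathsf N}(x)=2\norm{A_n^{1/2}x}_2^2 .
\]

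\emph{Optimality.} The optimal constant is $\max\{2,2\}=2$ by the theorem. More directly, restricting to $x\in\mathcal D_n$ reduces the inequality to the cyclic group LSI, where $2$ is sharp. The standard perturbation $x=1+\varepsilon(V+V^*)$ shows this: $\Ent(x^2)\sim2\varepsilon^2\norm{V+V^*}_2^2$ while $\mathcal E(x)=\varepsilon^2\norm{V+V^*}_2^2$, so the ratio tends to $2$.
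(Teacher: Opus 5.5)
Your proposal is correct and follows the paper's proof. The paper likewise first applies Theorem~\ref{Thm: Sharp crossed product LSI} with $\mathcal N=\mathbb C$, $B=0$, $G=\mathbb Z_n$ to get $C_{B_n}=2$, then applies it again with $\mathcal N=\mathcal D_n$, $B=B_n$, and proves sharpness by restricting to $\mathcal D_n$. Your Steps~1--2 make explicit the identification $\mathcal D_n\rtimes_\vartheta\mathbb Z_n\cong M_n(\mathbb C)$ and the Markov/covariance hypotheses, which the paper only states. The Markov property of $B_n$ also follows from Lemma~\ref{Lem: Compatible generator} with $\mathcal N=\mathbb C$, so your muddled aside about ``convolution with the positive-definite function'' and ``products of Poisson-type kernels'' can simply be dropped.
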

The case $n=2$ is already covered by King's theorem for unital qubit semigroups~\cite[Theorem~1]{MR3196986}: the nonzero eigenvalues of $A_2$ in the Pauli basis are $1,1,2$. For this same matrix generator with $n\ge4$, Junge, Palazuelos, Parcet and Perrin~\cite[Theorem~B]{MR3390901} proved sharp $L_2\to L_q$ estimates for even integers $q$ when $n$ is even or $n=5$, and when $n\ge7$ is odd and $q\le n$.

The present semigroup interpolation method grows out of the Lagrange-multiplier analysis for cyclic groups in~\cite[Sections~2--3]{yao2026completesolutionoptimalhypercontractivity} and its extension to free groups in~\cite{yao2026sharplogarithmicsobolevinequalities}. Studying the stationarity equations leads to the pairing $\operatorname{Re}\inner{Lx,Ax-x\log x}_2$, which is implicit in the cyclic-group polynomial reduction and appears explicitly in~\cite[Equation~(1.3)]{yao2026sharplogarithmicsobolevinequalities}. The estimates of this pairing used in the variational arguments admit an interpretation through differentiation along $e^{-tL}$, as explained in Section~\ref{Sec: Semigroup interpolation} and Remark~\ref{Rmk: Choice of interpolation}. This interpretation leads to the semigroup formulation developed here and its extension to crossed products.

For classical semigroup proofs of logarithmic Sobolev inequalities, see~\cite[Proposition 5.5.1]{MR3155209}. Related analyses of entropy and Fisher information along the Ornstein--Uhlenbeck density flow appear in Ledoux, Nourdin and Peccati~\cite{MR3320893}, while Fathi, Indrei and Ledoux~\cite{MR3567822} study the logarithmic Sobolev deficit.

For free groups, $A\lambda_g=|g|\lambda_g$ is the word-length generator on $\mathcal L(\mathbb F_n)$, and we take $L=A$. We rewrite the LSI as the nonnegativity of the logarithmic Sobolev deficit $\mathcal{F}_A(x)$
\[
\mathcal F_A(x)=2\mathcal E_A(x)-\Ent_\tau(x^2)\geq 0.
\]
For uniformly positive Fourier polynomials, we prove
\[
\mathcal Q_{A;A}(x)=\operatorname{Re}\inner{Ax,Ax-x\log x}_2+\mathcal E_A(x)\log\norm x_2\geq0
\]
and hence
\[
\frac{\dd}{\dd t}\mathcal F_A(e^{-tA}x)=-4\mathcal Q_{A;A}(e^{-tA}x)\leq0.
\]
The main estimate concerns $\operatorname{Re}\inner{Ax,x\log x}_2$: its spectral representation and concavity of the logarithm reduce the argument to a bound for
\[
\operatorname{Re}\inner{Au,u^2}_2,\qquad u=x-\tau(x)\mathbf1.
\]
This gives an independent proof of the sharp free-group LSI also established in~\cite{xiezhang2026hypercontractivityfreegroup}. The cubic-majorant method of Frank and Ivanisvili~\cite{FRANK2026111159}, used by Xie and Zhang~\cite{xie2026sharplogsobolevinequalitiesfinite,xiezhang2026hypercontractivityfreegroup} and Zhang~\cite{zhang2026sharphypercontractivityfreeorthogonal}, estimates $\tau(u^3)$ for $\tau(u)=0$. Our estimates retain the factors contributed by the interpolation generator and are proved independently of those third-moment bounds.

For crossed products, we interpolate with $L=\mathsf N$ to obtain the LSI for $A=\widehat B+\mathsf N$. We adapt the operator-valued transference of Chen, Xu and Yin~\cite[Section~2.3]{MR3079331} to represent the weighted sum of entropies $\frac12\sum_k\ell(k)\Ent_{\tau_{\mathcal N}}(|a_k|^2)$ and estimate $\operatorname{Re}\inner{\mathsf Nx,x\log x}_2$ on a common Hilbert space. A resolvent comparison reduces the difference of the resulting logarithmic forms to a scalar cubic estimate on $\widehat G$.

The choice of interpolation is not unique. Remark~\ref{Rmk: Choice of interpolation} discusses the nearest-neighbor generator $L_{\mathrm{nn}}=2I-S_n-S_n^*$ on scalar cyclic groups used in~\cite{yao2026completesolutionoptimalhypercontractivity}, and the averaging generator $L_0=I-E_\tau$, where $E_\tau(x)=\tau(x)\mathbf1$. In both comparisons, the operator $A$ remains the generator of the LSI under consideration, while $L_{\mathrm{nn}}$ or $L_0$ specifies the interpolation. The comparison using $L_0$ explains a consequence of the third-moment estimates of Xie and Zhang and is not used in our main proofs.

The article is organized as follows. Section~\ref{Sec: Crossed preliminaries} fixes the analytic conventions, develops trace differentiation and the spectral representation for derivations, and states the semigroup interpolation scheme. Section~\ref{Sec: Free group case} applies this scheme to prove the sharp LSI on free groups and discusses the choice of interpolation in Remark~\ref{Rmk: Choice of interpolation}. Section~\ref{Sec: Integer crossed products} treats integer and finite-cyclic crossed products in a common framework, then derives the sharp LSI for scalar twisted crossed products by $\mathbb Z^d$, quantum tori, and matrix algebras. Section~\ref{Sec: Cubic pairing estimates} proves the cubic estimates involving the generators used in the preceding arguments.

\section{Preliminaries}\label{Sec: Crossed preliminaries}

\subsection{Tracial von Neumann algebras, \texorpdfstring{$L_p$}{Lp} spaces, and crossed products}

Let $(\mathcal M,\tau)$ be a finite von Neumann algebra with a faithful normal tracial state. For $1\le p<\infty$, define $L_p(\mathcal M,\tau)$ as the completion of $\mathcal M$ with respect to
\[
\norm x_p=\tau(|x|^p)^{1/p},\qquad x\in\mathcal M,
\]
where $|x|=(x^*x)^{1/2}$. Its elements can be identified with possibly unbounded $\tau$-measurable operators affiliated with $\mathcal M$. We put $L_\infty(\mathcal M,\tau)=\mathcal M$ with the operator norm and abbreviate these spaces to $L_p$ when the algebra and trace are clear. We use inner products that are linear in the second variable; in particular,
\[
\inner{x,y}_2=\tau(x^*y),\qquad x,y\in L_2.
\]
We refer to~\cite{MR1999201} for these facts and further background on noncommutative $L_p$ spaces.\par

For a discrete group $G$ with identity $e$, the left regular representation acts on $\ell_2(G)$ by $\lambda_g\delta_h=\delta_{gh}$, where $(\delta_h)_{h\in G}$ is the canonical orthonormal basis. The group von Neumann algebra is
\[
\mathcal L(G)=\overline{\operatorname{span}}^{\,w^*}\{\lambda_g:g\in G\} \subset B(\ell_2(G)).
\]
Its canonical faithful normal tracial state is given by
\[
\tau(x)=\inner{\delta_e,x\delta_e}_{\ell_2(G)},\qquad \tau(\lambda_g)=\begin{cases}1,&g=e,\\0,&g\ne e.\end{cases}
\]
For $f\in\ell_1(G)$, the series $\lambda(f)=\sum_{g\in G}f(g)\lambda_g$ converges in operator norm and satisfies $\tau(\lambda(f))=f(e)$. If $G$ is abelian, the Fourier transform identifies $\mathcal L(G)$ with $L_\infty(\widehat G)$ and its noncommutative $L_p$ spaces with the classical $L_p$ spaces on the compact Pontryagin dual $\widehat G$, equipped with normalized Haar measure.

Let $G=\mathbb Z$ or $G=\mathbb Z_n$, $n\ge2$, and let $\vartheta$ be a normal trace-preserving automorphism of $(\mathcal N,\tau_{\mathcal N})$, with $\vartheta^n=\id$ in the finite case. We recall the regular representation of $\mathcal M=\mathcal N\rtimes_\vartheta G$; see~\cite[Section~X.2]{MR1943006}. For a faithful normal representation $\mathcal N\subset B(H)$, set
\[
(\pi(a)\xi)_j=\vartheta^{-j}(a)\xi_j,\qquad (U\xi)_j=\xi_{j-1},\qquad \xi\in\ell_2(G;H),\quad j\in G.
\]
Then $\mathcal M=W^*(\pi(\mathcal N),U)$. Identifying $\pi(\mathcal N)$ with $\mathcal N$, we have $U^kaU^{-k}=\vartheta^k(a)$. For $G=\mathbb Z_n$, all indices and sums are taken modulo $n$, and $U^n=\mathbf1$. The canonical trace on $\mathcal{M}$ is
\[
\tau(U^ka)=\mathbf1_{\{k=0\}}\tau_{\mathcal N}(a),
\]
and $L_2(\mathcal{M})$ can be decomposed as
\[
L_2(\mathcal M)=\bigoplus_{k\in G}U^kL_2(\mathcal N).
\]
Thus every $x\in L_2(\mathcal M)$ has a unique expansion $x=\sum_{k\in G}U^ka_k$ converging in $L_2$, with coefficients written on the right. The Fourier orthogonality gives
\[
\norm x_2^2=\sum_{k\in G}\norm{a_k}_2^2.
\]

We identify $\widehat G$ with $\mathbb{T}$ for $G=\mathbb Z$, and with $\{2\pi j/n:0\le j<n\}$ for $G=\mathbb Z_n$. Its normalized Haar measure is respectively $\dd m(\theta)=\frac{\dd\theta}{2\pi}$ or $m=\frac{1}{n}\sum_{j=0}^{n-1}\delta_{2\pi j/n}$. With $\chi_k(\theta)=e^{ik\theta}$, the dual action and Fourier projections are
\[
\alpha_\theta(U^ka)=\chi_k(\theta)U^ka,\qquad P_kx=\int_{\widehat G}\overline{\chi_k(\theta)}\alpha_\theta(x)\,\dd m(\theta)=U^ka_k.
\]
Here $P_k:L_2(\mathcal M)\to U^kL_2(\mathcal N)$ is the orthogonal projection onto $U^kL_2(\mathcal N)$. Thus $P_k^*=P_k$, $P_kP_j=0$ for $k\ne j$, and $\sum_kP_k=I$ strongly on $L_2(\mathcal M)$. Averaging the dual action gives the trace-preserving conditional expectation onto $\mathcal N$:
\begin{equation}\label{Eqn: Base expectation}
P_0=\mathbb E_{\mathcal N},\qquad \mathbb E_{\mathcal N}x=\int_{\widehat G}\alpha_\theta(x)\,\dd m(\theta)=a_0, \qquad a_k=\mathbb E_{\mathcal N}(U^{-k}x).
\end{equation}

\subsection{Markov semigroups, quadratic forms, and word-length generators}\label{Sec: Compatible Markov semigroup}

A symmetric completely Markov generator $T$ is a nonnegative self-adjoint operator on $L_2(\mathcal M)$ whose semigroup $P_t=e^{-tT}$ restricts to normal, unital, completely positive and trace-preserving maps on $\mathcal M$. Symmetry means $\inner{P_tu,v}_2=\inner{u,P_tv}_2$, and $P_t$ is strongly continuous on $L_2$. In particular, $T\mathbf1=0$. We use the closed quadratic form
\[
\mathcal E_T(x)=\norm{T^{1/2}x}_2^2,\qquad x\in\Dom(T^{1/2}).
\]
When $T=\delta^*\delta$ for a derivation $\delta$, the semigroup gives the interpolation, while the Leibniz rule gives the spectral representation used below.

The following inequality will be used with Fourier coefficients. Extending $\mathcal E_T$ to $+\infty$ outside its domain, spectral calculus gives
\[
\mathcal E_T(z)=\lim_{t\downarrow0}\frac1t \inner{z,(I-P_t)z}_2,\qquad z\in L_2.
\]
For self-adjoint $z\in\Dom(T^{1/2})$, write $z=z_+-z_-$. Positivity and symmetry of $P_t$ imply
\[
\inner{|z|,P_t|z|}_2-\inner{z,P_tz}_2 =4\inner{z_+,P_tz_-}_2\ge0.
\]
Since $\norm{|z|}_2=\norm z_2$, the limit formula yields
\begin{equation}\label{eq:modulus-form}
|z|\in\Dom(T^{1/2}),\qquad \mathcal E_T(|z|)\le\mathcal E_T(z).
\end{equation}
For arbitrary $z\in\Dom(T^{1/2})$, complete positivity of $P_t$ allows us to apply the same argument to the self-adjoint matrix $\left(\begin{smallmatrix}0&z\\z^*&0\end{smallmatrix}\right)$. Since $\mathcal E_T(z^*)=\mathcal E_T(z)$, we obtain
\[
|z|,|z^*|\in\Dom(T^{1/2}),\qquad \mathcal E_T(|z|)+\mathcal E_T(|z^*|)\le 2\mathcal E_T(z).
\]
For $a\in\Dom(B^{1/2})$, polar decomposition and traciality give $\Ent_{\tau_{\mathcal N}}(|a|^2)=\Ent_{\tau_{\mathcal N}}(|a^*|^2)$. Averaging the LSI on $\mathcal N$ in~\eqref{Ineq: Base LSI} for $|a|$ and $|a^*|$, and applying the preceding estimate with $T=B$, gives
\begin{equation}\label{Ineq: Base LSI for general element}
\Ent_{\tau_{\mathcal N}}(|a|^2)\le C_B\mathcal E_B(a),\qquad a\in\Dom(B^{1/2}).
\end{equation}

For the standard word length
\[
\ell(k)=
\begin{cases}
|k|,&G=\mathbb Z,\\
\min\{k,n-k\},\quad 0\le k<n,&G=\mathbb Z_n,
\end{cases}
\]
the word-length generator and its semigroup on $L_2(\mathcal M)$ are
\begin{equation}\label{Eqn: Poisson semigroup}
\mathsf NP_k=\ell(k)P_k,\qquad x_t=e^{-t\mathsf N}x=\sum_{k\in G}e^{-t\ell(k)}P_kx.
\end{equation}
For $t>0$, define the kernel of $e^{-t\mathsf{N}}$ on $\widehat G$ by
\[
p_t(\theta)=\sum_{k\in G}e^{-t\ell(k)}\overline{\chi_k(\theta)},\qquad \int_{\widehat G}\chi_k(\theta)p_t(\theta)\,dm(\theta)=e^{-t\ell(k)}.
\]
The series is absolutely convergent when $G=\mathbb Z$ and finite when $G=\mathbb Z_n$, and $m$ is normalized Haar measure on $\widehat G$. Since $\alpha_\theta(U^ka)=\chi_k(\theta)U^ka$, averaging each Fourier component gives
\begin{equation}\label{Eqn: Crossed word length jumps}
e^{-t\mathsf N}x=\int_{\widehat G}\alpha_\theta(x)p_t(\theta)\,\dd m(\theta),\qquad x\in L_2(\mathcal M),
\end{equation}
where the integral is understood in $L_2$. Let $\nu$ be the positive symmetric measure on $\widehat G\setminus\{0\}$ characterized by
\begin{equation}\label{Eqn: Crossed jump measure}
\int_{\widehat G}f(\theta)\,\dd\nu(\theta)=\lim_{t\downarrow0}\frac1{2t}\int_{\widehat G}f(\theta)p_t(\theta)\,\dd m(\theta)
\end{equation}
for every nonnegative trigonometric polynomial $f$ with $f(0)=0$, and set $\nu(\{0\})=0$. Existence follows by summing the geometric series in the integer case and by a finite Fourier calculation in the cyclic case. For a finite Fourier sum $x$, the generator $\mathsf{N}$ has the representation
\[
\mathsf Nx=\lim_{t\downarrow0}\frac{x-e^{-t\mathsf N}x}{t}=\int_{\widehat G}(2x-\alpha_\theta(x)-\alpha_{-\theta}(x))\,\dd\nu(\theta).
\]
The limit is in $L_2$, and the integral is justified coefficientwise by the identities
\begin{equation}\label{Eqn: Crossed jump identities}
\begin{aligned}
\int_{\widehat G}|1-\chi_k(\theta)|^2\,d\nu(\theta)&=\ell(k),\\
\int_{\widehat G}(1-\chi_r(\theta))(1-\chi_s(\theta))\,d\nu(\theta) &=\frac{\ell(r)+\ell(s)-\ell(r+s)}2\ge0.
\end{aligned}
\end{equation}
By Fourier orthogonality, for a finite Fourier sum $x$, we have
\begin{equation}\label{Eqn: Crossed jump form}
\mathcal E_{\mathsf N}(x) =\int_{\widehat G}\norm{x-\alpha_\theta(x)}_2^2\,d\nu(\theta).
\end{equation}
These identities supply the derivation used in Section~\ref{Sec: Main proof}. \par 

The following lemma shows that $A=\widehat B+\mathsf N$ is a symmetric completely Markov generator under the covariance condition~\eqref{Eqn: Covariance}.
\begin{lemma}\label{Lem: Compatible generator}
Let $B$ be a symmetric completely Markov generator on $L_2(\mathcal N)$ satisfying~\eqref{Eqn: Covariance}. For $G=\mathbb Z$ or $G=\mathbb Z_n$, $n\ge2$, the direct-sum operator $A(U^ka)=U^k(B+\ell(k))a$ is symmetric completely Markov, with
\begin{equation}\label{Eqn: Compatible Markov semigroup}
e^{-tA}(U^ka)=e^{-t\ell(k)}U^ke^{-tB}(a),\qquad t\ge0.
\end{equation}
Its operator domain is
\[
\Dom(A)=\left\{x=\sum_{k\in G}U^ka_k\in L_2(\mathcal M): a_k\in\Dom(B),\ \sum_{k\in G}\norm{(B+\ell(k))a_k}_2^2<\infty\right\}.
\]
For every $x=\sum_{k\in G}U^ka_k\in\Dom(A^{1/2})$, we have
\begin{equation}\label{Eqn: Crossed form}
\begin{aligned}
\norm x_2^2&=\sum_{k\in G}\norm{a_k}_2^2,\\
\mathcal E_A(x)&=\sum_{k\in G}\bigl(\mathcal E_B(a_k)+\ell(k)\norm{a_k}_2^2\bigr) =\mathcal E_{\widehat B}(x)+\mathcal E_{\mathsf N}(x).
\end{aligned}
\end{equation}
\end{lemma}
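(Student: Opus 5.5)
The plan is to define $A$ through its semigroup, check the Markov properties there, and read off domain and form from the orthogonal decomposition $L_2(\mathcal M)=\bigoplus_k U^kL_2(\mathcal N)$.

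\emph{Step 1: diagonalization.} Define $A$ as the direct sum of the operators $B+\ell(k)$ acting on the $k$-th summand. Via the unitary $U^ka\mapsto a$, each summand $U^kL_2(\mathcal N)$ is identified with $L_2(\mathcal N)$, and under this identification the $k$-th piece of $A$ is $B+\ell(k)$. Each piece is nonnegative and self-adjoint, so $A$ is nonnegative self-adjoint. The stated description of $\Dom(A)$ is the standard domain of an orthogonal direct sum. Functional calculus applies summand-wise, which gives $e^{-tA}(U^ka)=e^{-t\ell(k)}U^ke^{-tB}a$. It also gives $\Dom(A^{1/2})$ together with the formula $\mathcal E_A(x)=\sum_k\norm{(B+\ell(k))^{1/2}a_k}_2^2$. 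This sum equals $\sum_k\bigl(\mathcal E_B(a_k)+\ell(k)\norm{a_k}_2^2\bigr)$, because $\ell(k)$ is a scalar commuting with $B$. The same computation applied to $\widehat B$ and to $\mathsf N$ separately yields~\eqref{Eqn: Crossed form}. Symmetry and strong continuity of $e^{-tA}$ on $L_2$ are immediate from self-adjointness.

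\emph{Step 2: factorization and complete positivity.} Write $e^{-tA}=e^{-t\mathsf N}\circ\widehat{P}_t$, where $\widehat P_t(U^ka)=U^ke^{-tB}a$. The two factors commute since both act diagonally. For $e^{-t\mathsf N}$, formula~\eqref{Eqn: Crossed word length jumps} expresses it as an average of the dual action $\alpha_\theta$ against $p_t\,dm$. The kernel $p_t$ is nonnegative: it is the Poisson kernel for $\mathbb Z$, and for $\mathbb Z_n$ it is a product-type positive-definite kernel. Since $\int p_t\,dm=1$, the map $e^{-t\mathsf N}$ is a convex average of trace-preserving $*$-automorphisms, hence normal, unital, completely positive and trace-preserving. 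If positivity of $p_t$ for $\mathbb Z_n$ needs justification, it follows because $e^{-t\ell_n}$ is positive definite on $\mathbb Z_n$, $\ell_n$ being conditionally negative definite.

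\emph{Step 3: the map $\widehat P_t$; this is the main obstacle.} We must show that $\widehat P_t$ extends to a normal ucp trace-preserving map on $\mathcal M$. I would use the covariance~\eqref{Eqn: Covariance}, which gives $\vartheta e^{-tB}=e^{-tB}\vartheta$. Consider the map $\Phi_t=\id_{B(\ell_2(G))}\bar\otimes e^{-tB}$ on $B(\ell_2(G))\bar\otimes\mathcal N$; it is normal and ucp. In the regular representation, $\pi(a)$ is the diagonal operator $\sum_j e_{jj}\otimes\vartheta^{-j}(a)$, and $U$ is the shift tensored with $\mathbf1$. Then
\[
\Phi_t(\pi(a))=\sum_j e_{jj}\otimes e^{-tB}\vartheta^{-j}(a)=\pi(e^{-tB}a)
\]
by covariance. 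More generally $\Phi_t(U^k\pi(a))=U^k\pi(e^{-tB}a)$, since $U^k$ has scalar $\mathcal N$-component and $\Phi_t$ is $B(\ell_2(G))$-bimodular. Hence $\Phi_t$ leaves $\mathcal M$ invariant and restricts there to $\widehat P_t$, which is therefore normal and ucp. Trace preservation follows from $\tau(U^ka)=\mathbf 1_{\{k=0\}}\tau_{\mathcal N}(a)$ and $\tau_{\mathcal N}\circ e^{-tB}=\tau_{\mathcal N}$.

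To make this rigorous, a weak$^*$-density argument from Fourier polynomials is needed. One also has to check that the $L_2$ operator defined in Step 1 agrees with $\Phi_t|_{\mathcal M}$. This follows from agreement on $U^k\mathcal N$, which is dense in $L_2$, together with $L_2$-contractivity.

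Composing the two factors gives the complete Markov property of $e^{-tA}$.
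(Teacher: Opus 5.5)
Your proof is correct and follows the paper's route. You factor $e^{-tA}$ into the commuting semigroups $e^{-t\mathsf N}$ and $e^{-t\widehat B}$, obtain the Markov property of the first by averaging the dual action against a positive kernel and of the second from the covariance, and read off the domain and form identities from the direct-sum functional calculus. The differences are minor: you prove the coefficientwise extension of $e^{-tB}$ explicitly, by restricting $\mathrm{id}\bar\otimes e^{-tB}$ from $B(\ell_2(G))\bar\otimes\mathcal N$, where the paper cites the literature; and for $\mathbb Z_n$ you use Schoenberg positivity of $p_t$ instead of exponentiating the bounded jump generator. Both rest on the same fact, that $\ell_n$ is conditionally negative definite, so replace the vague phrase ``product-type kernel'' with that justification.
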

\begin{proof}
The semigroup $e^{-t\mathsf N}$ is symmetric completely Markov by averaging against the Poisson kernel for $G=\mathbb Z$, and by exponentiating the bounded symmetric jump generator for $G=\mathbb Z_n$. Commutation with $\vartheta$ allows $e^{-tB}$ to extend coefficientwise to a symmetric completely Markov semigroup on $\mathcal M$; see~\cite[Section~2.1]{MR3283931}. In our right-coefficient convention, its action is
\[
e^{-t\widehat B}(U^ka)=e^{-tB}(\vartheta^k(a))U^k=U^ke^{-tB}(a).
\]
This extension commutes with $e^{-t\mathsf N}$. Their composition has the Fourier action~\eqref{Eqn: Compatible Markov semigroup} and generator $A=\widehat B+\mathsf N$, with the stated direct-sum domain. Parseval's identity gives the norm formula in~\eqref{Eqn: Crossed form}, and the direct-sum functional calculus gives the quadratic-form formula on $\Dom(A^{1/2})$.
\end{proof}

\subsection{Spectral calculus and spectral representation for derivations}
Let $S,T$ be self-adjoint operators on a Hilbert space $\mathcal K$, with spectral resolutions $E_S,E_T$. They are said to \emph{strongly commute} if every spectral projection of $S$ commutes with every spectral projection of $T$. In this case, there is a unique joint projection-valued measure $P$ on $\mathbb R^2$, characterized by $P(B\times C)=E_S(B)E_T(C)$ for Borel sets $B,C\subset\mathbb R$. The joint functional calculus is defined, for a Borel function $F:\mathbb R^2\to\mathbb C$, by
\[
F(S,T)=\int_{\mathbb R^2}F(s,t)\,\mathrm dP(s,t).
\]
For $\xi\in\mathcal K$, the scalar measure $\mu_\xi(\Omega)=\inner{\xi,P(\Omega)\xi}_{\mathcal K}$ is positive and has mass $\norm\xi_{\mathcal K}^2$. For bounded Borel $F$,
\begin{equation}\label{eqn: joint spectral probability measure}
\inner{\xi,F(S,T)\xi}_{\mathcal K}=\int F\,\mathrm d\mu_\xi,\qquad \norm{F(S,T)\xi}_{\mathcal K}^2=\int|F|^2\,\mathrm d\mu_\xi.
\end{equation}
For unbounded Borel $F$, finiteness of the second integral is equivalent to $\xi\in\Dom(F(S,T))$, and both identities remain valid on this domain. If $F$ is real and $\int|F|\,\mathrm d\mu_\xi<\infty$, the first identity also holds in the quadratic-form sense: its left-hand side is interpreted as
\[
\lim_{m\to\infty}\inner{\xi,F_m(S,T)\xi}_{\mathcal K},\qquad F_m=\max\{-m,\min\{F,m\}\}.
\]
Dominated convergence identifies this limit with $\int F\,\mathrm d\mu_\xi$; square integrability is needed only to interpret $F(S,T)\xi$ as a vector. We refer to \cite{MR2953553} for further background on the spectral theory of commuting normal operators.

For a real $C^1$ function $f$, write
\[
f^{[1]}(s,t)=
\begin{cases}
\dfrac{f(s)-f(t)}{s-t},&s\ne t,\\[4pt]
f'(s),&s=t.
\end{cases}
\]
We use derivations with values in Hilbert bimodules, as in~\cite[Section~2]{MR2470111}: the left and right actions commute, and the derivation satisfies the Leibniz rule with respect to these actions. We need the following spectral representation only for bounded elements in a unital $*$-algebra contained in the operator domain.

\begin{lemma}\label{Lem: Endpoint representation}
Let $\mathcal H$ be a Hilbert $\mathcal M$-bimodule with commuting normal unital $*$-representations $\pi_L$ of $\mathcal M$ and $\pi_R$ of $\mathcal M^{\mathrm{op}}$. Let $\delta$ be a closed densely defined operator from $L_2(\mathcal M)$ to $\mathcal H$, and put $L=\delta^*\delta$. Suppose that a unital $*$-subalgebra $\mathcal A_0\subset\mathcal M\cap\Dom(L)$ satisfies
\[
\delta(ab)=\pi_L(a)\delta(b)+\pi_R(b)\delta(a), \qquad a,b\in\mathcal A_0.
\]
For every $x=x^*\in\mathcal A_0$, there is a finite positive measure $\mu_x$ on $\sigma(x)^2$ such that
\begin{equation}\label{Eqn: Endpoint representation}
\operatorname{Re}\inner{Lx,f(x)}_2 =\int_{\sigma(x)^2}f^{[1]}(s,t)\,\mathrm d\mu_x(s,t)
\end{equation}
for every real $C^1$ function $f$ on an interval containing $\sigma(x)$. In particular,
\begin{equation}\label{Eqn: Endpoint mass and cubic moment}
\mu_x(\sigma(x)^2)=\inner{Lx,x}_2,\qquad \int_{\sigma(x)^2}(s+t)\,\mathrm d\mu_x(s,t) =\operatorname{Re}\inner{Lx,x^2}_2.
\end{equation}
\end{lemma}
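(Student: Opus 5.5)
The plan is to construct $\mu_x$ as the joint spectral measure of the vector $\delta(x)\in\mathcal H$ relative to the strongly commuting pair $S=\pi_L(x)$, $T=\pi_R(x)$. Both are bounded self-adjoint operators with spectrum contained in $\sigma(x)$; for $T$ this uses $\sigma_{\mathcal M^{\mathrm{op}}}(x)=\sigma(x)$. They commute because the two representations commute. Let $P$ be their joint projection-valued measure on $\sigma(x)^2$, and set $\mu_x(\Omega)=\inner{\delta x,P(\Omega)\delta x}_{\mathcal H}$. This is a finite positive measure of total mass $\norm{\delta x}^2=\inner{Lx,x}_2$, since $x\in\Dom(L)$. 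That already gives the first identity in \eqref{Eqn: Endpoint mass and cubic moment}, once the general formula is in hand (take $f(s)=s$, so $f^{[1]}\equiv1$).

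The key step is a monomial identity obtained by iterating the Leibniz rule. For $x^m\in\mathcal A_0$ we have
\[
\delta(x^m)=\sum_{j=0}^{m-1}\pi_L(x)^j\pi_R(x)^{m-1-j}\delta(x)=p_m^{[1]}(S,T)\,\delta(x),
\]
where $p_m(s)=s^m$ and $p_m^{[1]}(s,t)=\sum_j s^jt^{m-1-j}$. This follows by induction, using that $\pi_L$ and $\pi_R$ commute. By linearity, $\delta(p(x))=p^{[1]}(S,T)\delta(x)$ for every real polynomial $p$. Since $x\in\Dom(L)$ and $p(x)\in\Dom(\delta)$, we get $\inner{Lx,p(x)}_2=\inner{\delta x,\delta p(x)}_{\mathcal H}=\int p^{[1]}\,\mathrm d\mu_x$ by \eqref{eqn: joint spectral probability measure}. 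The right-hand side is real, because $p^{[1]}$ is real and $\mu_x$ is positive. This proves \eqref{Eqn: Endpoint representation} for polynomials.

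To extend to a real $C^1$ function $f$ on an interval $I\supset\sigma(x)$, I would choose polynomials $p_n$ with $p_n\to f$ and $p_n'\to f'$ uniformly on a compact interval $J$ with $\sigma(x)\subset J\subset I$. For example, apply Weierstrass to $f'$ and then integrate. By the mean value theorem, $\sup_{J^2}|p_n^{[1]}-f^{[1]}|\le\sup_J|p_n'-f'|$, since $(p_n-f)^{[1]}(s,t)$ equals a derivative value of $p_n-f$ at an intermediate point. Hence the right-hand sides converge, because $\mu_x$ is finite. On the left, $p_n(x)\to f(x)$ in operator norm by continuous functional calculus, hence in $L_2$, and $Lx\in L_2$ is fixed, so $\operatorname{Re}\inner{Lx,p_n(x)}_2\to\operatorname{Re}\inner{Lx,f(x)}_2$. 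The cubic-moment identity is the case $f(s)=s^2$, where $f^{[1]}=s+t$.

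The only delicate point is making sure the spectra of $S$ and $T$ lie in $\sigma(x)$, so that $\mu_x$ is supported where $f$ is defined. This holds because unital $*$-homomorphisms do not enlarge spectra, and the spectrum of $x$ in $\mathcal M^{\mathrm{op}}$ equals its spectrum in $\mathcal M$.
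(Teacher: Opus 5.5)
Your proof is correct, and up to the last step it is the paper's proof. You use the same measure $\mu_x(\Omega)=\inner{\delta x,P(\Omega)\delta x}_{\mathcal H}$ built from the joint spectral measure of $\pi_L(x),\pi_R(x)$, and the same iterated Leibniz identity $\delta p(x)=p^{[1]}(\pi_L(x),\pi_R(x))\delta x$. For the constant term of $p$ you implicitly use $\delta(\mathbf 1)=0$, which follows from the Leibniz rule with $a=b=\mathbf 1$.

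The difference lies in passing from polynomials to $C^1$ functions. The paper works at the level of vectors. Since $p_n(x)\to f(x)$ in $L_2$, and $\delta p_n(x)=p_n^{[1]}(\pi_L(x),\pi_R(x))\delta x$ converges in $\mathcal H$ by uniform convergence of the divided differences, closedness of $\delta$ yields $f(x)\in\Dom(\delta)$ together with the operator chain rule \eqref{Eqn: Endpoint chain rule}. The paper then pairs this with $\delta x$.

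You pass to the limit only in the scalar identity, using that $Lx\in L_2$ is a fixed vector. This needs neither closedness of $\delta$ nor $f(x)\in\Dom(\delta)$, so it is slightly more economical for the lemma as stated.

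What the paper's route buys is the chain rule itself, which is reused later. In Lemma~\ref{Lem: Crossed spectral endpoint}, the formula \eqref{Eqn: Crossed logarithmic jump} for $\delta_\theta(x\log x)$ and the rewriting $\operatorname{Re}\inner{\mathsf Nx,x\log x}_2=\operatorname{Re}\inner{v_x,\delta(x\log x)}_{\nu,2}$ both require $x\log x\in\Dom(\delta)$ and the vector identity. Your scalar limit provides neither of these.
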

\begin{proof}
Let $P_x$ be the joint spectral measure of the commuting bounded self-adjoint operators $\pi_L(x),\pi_R(x)$, and set
\[
\mu_x(\Omega)=\inner{\delta x,P_x(\Omega)\delta x}_{\mathcal H}.
\]
The Leibniz rule gives $\delta(x^m)=\sum_{k=0}^{m-1}\pi_L(x)^k\pi_R(x)^{m-1-k}\delta x$. Thus, for every polynomial $p$,
\begin{equation}\label{Eqn: Derivation polynomial chain rule}
\delta p(x)=p^{[1]}(\pi_L(x),\pi_R(x))\delta x.
\end{equation}
On a compact interval containing $\sigma(x)$, approximate $f$ and $f'$ uniformly by polynomials and their derivatives. The corresponding divided differences converge uniformly as well, so closedness of $\delta$ gives $f(x)\in\Dom(\delta)$ and
\begin{equation}\label{Eqn: Endpoint chain rule}
\delta f(x)=f^{[1]}(\pi_L(x),\pi_R(x))\delta x.
\end{equation}
Pairing with $\delta x$ and using $L=\delta^*\delta$ proves \eqref{Eqn: Endpoint representation}. Taking $f(s)=s$ and $f(s)=s^2$ gives~\eqref{Eqn: Endpoint mass and cubic moment}.
\end{proof}

\subsection{Semigroup interpolation and spectral calculus framework for logarithmic Sobolev inequalities}\label{Sec: Semigroup interpolation}
We relate the pairing $\operatorname{Re}\inner{Lx,Ax-x\log x}_2$ from the Lagrange-multiplier arguments in~\cite{yao2026completesolutionoptimalhypercontractivity,yao2026sharplogarithmicsobolevinequalities} to the derivative of the logarithmic Sobolev deficit along $e^{-tL}x$. Spectral calculus then reduces the required logarithmic estimate to a bound for $\operatorname{Re}\inner{Lx,x^2}_2$. In the crossed-product setting, we modify the logarithmic Sobolev deficit by adding the sum of entropies $\sum_k\Ent_{\tau_{\mathcal N}}(|a_k|^2)$.

Let $A$ and $L$ be strongly commuting symmetric completely Markov generators on $L_2(\mathcal M,\tau)$. The generator $A$ determines the desired LSI and the generator $L$ determines the interpolation semigroup:
\[
\Ent_\tau(x^2)\le2\mathcal E_A(x),\qquad x_t=e^{-tL}x.
\]
Define the logarithmic Sobolev deficit by
\begin{equation}\label{Eqn: General deficit}
\mathcal F_A(x)=2\mathcal E_A(x)-\Ent_\tau(x^2),\qquad x\in\mathcal M_+\cap\Dom(A^{1/2}),
\end{equation}
and, for nonzero bounded positive $x\in\Dom(A)\cap\Dom(L)$, set
\begin{equation}\label{Eqn: General comparison}
\mathcal Q_{A;L}(x)=\operatorname{Re}\inner{Lx,Ax-x\log x}_2+\mathcal E_L(x)\log\norm x_2.
\end{equation}
Both functionals are homogeneous of degree two and related by the derivative of $\mathcal F_A(x)$ along $x_t=e^{-tL}x$.

\begin{proposition}\label{Prop: General interpolation}
Let $A,L$ be as above. Suppose that $\mathcal A_0\subset\mathcal M\cap\Dom(A)\cap\Dom(L)$ is an $L_2$-dense unital $*$-subalgebra. For $x\in\mathcal A_0$ with $x\ge\varepsilon\mathbf1>0$, write $x_t=e^{-tL}x$, and let $\mathbb{E}$ be the trace-preserving conditional expectation onto the fixed-point algebra of $e^{-tL}$. Then
\begin{equation}\label{Eqn: Deficit derivative comparison}
\frac{\dd}{\dd t}\mathcal F_A(x_t)=-4\mathcal Q_{A;L}(x_t),
\end{equation}
and $\mathcal F_A(x_t)\to\mathcal F_A(\mathbb{E}x)$ as $t\to\infty$. Consequently,
\begin{equation}\label{Eqn: General integrated deficit}
\mathcal F_A(x)=\mathcal F_A(\mathbb{E}x)+4\int_0^\infty\mathcal Q_{A;L}(x_t)\,\dd t,
\end{equation}
where the integral is understood as an improper integral. In particular, if $\mathcal Q_{A;L}(x_t)\ge0$ for all $t\ge0$ and $\mathcal F_A(\mathbb{E}x)\ge0$, then $\Ent_\tau(x^2)\le2\mathcal E_A(x)$.
\end{proposition}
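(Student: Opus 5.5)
The plan is to differentiate each piece of $\mathcal F_A(x_t)=2\mathcal E_A(x_t)-\Ent_\tau(x_t^2)$ separately, then integrate and pass to the limit $t\to\infty$.

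\emph{Regularity along the flow.} Since $e^{-tL}$ is unital, positive and trace-preserving, $x\ge\varepsilon\mathbf1$ gives $x_t\ge\varepsilon\mathbf1$ and $\norm{x_t}_\infty\le\norm x_\infty$. Hence the spectra of all $x_t$ lie in the fixed compact interval $[\varepsilon,\norm x_\infty]$, on which $s\mapsto s^2\log s^2$ is smooth. Strong commutation gives $e^{-tL}\Dom(A)\subset\Dom(A)$, with $Ax_t=e^{-tL}Ax$. Since $x\in\Dom(L)$, the map $t\mapsto x_t$ is $C^1$ in $L_2$ with derivative $-Lx_t$.

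\emph{Energy term.} By joint spectral calculus for the strongly commuting pair $(A,L)$,
\[
\frac{\dd}{\dd t}\mathcal E_A(x_t)=\frac{\dd}{\dd t}\inner{x_t,Ax_t}_2=-2\operatorname{Re}\inner{Lx_t,Ax_t}_2 .
\]

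\emph{Entropy term.} Let $\phi(s)=s^2\log s^2$ on $[\varepsilon,\norm x_\infty]$. For bounded self-adjoint paths with spectra in this interval, trace differentiation gives $\frac{\dd}{\dd t}\tau(\phi(x_t))=\tau(\phi'(x_t)\dot x_t)$, with $\phi'(s)=2s\log s^2+2s$. I would justify this by polynomial approximation of $\phi$ on the interval, using cyclicity of the trace, together with the fact that $\dot x_t=-Lx_t$ exists in $L_2$. Since $x_t$ is not assumed to be differentiable in operator norm, the difference quotients must be controlled in $L_2$, using uniform operator bounds on $x_t$ and a Lipschitz estimate for polynomial functional calculus. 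This yields
\[
\frac{\dd}{\dd t}\tau(x_t^2\log x_t^2)=-4\operatorname{Re}\inner{Lx_t,x_t\log x_t}_2-2\inner{Lx_t,x_t}_2 .
\]
Also $\tau(x_t^2)=\norm{x_t}_2^2$ has derivative $-2\mathcal E_L(x_t)$. The derivative of $\norm{x_t}_2^2\log\norm{x_t}_2^2$ is therefore $-2\mathcal E_L(x_t)\bigl(\log\norm{x_t}_2^2+1\bigr)$.

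\emph{Assembling the derivative.} Combining the pieces,
\[
\frac{\dd}{\dd t}\mathcal F_A(x_t)=-4\operatorname{Re}\inner{Lx_t,Ax_t}_2+4\operatorname{Re}\inner{Lx_t,x_t\log x_t}_2+2\mathcal E_L(x_t)-2\mathcal E_L(x_t)-4\mathcal E_L(x_t)\log\norm{x_t}_2 .
\]
The two $2\mathcal E_L(x_t)$ terms cancel, and what remains is $-4\mathcal Q_{A;L}(x_t)$. This proves \eqref{Eqn: Deficit derivative comparison}.

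\emph{Limit $t\to\infty$.} By the spectral theorem, $x_t\to\mathbb Ex$ in $L_2$, where $\mathbb E$ is the spectral projection of $L$ onto $\ker L$. Since $\mathbb Ex$ is the trace-preserving conditional expectation onto the fixed-point algebra, it lies in $\mathcal M$, and $\mathbb Ex\ge\varepsilon\mathbf1$. Then:
\begin{itemize}
\item \emph{Energy.} $\mathcal E_A(x_t)=\norm{A^{1/2}e^{-tL}x}_2^2\to\norm{A^{1/2}\mathbb Ex}_2^2$ by commutation and dominated convergence in the joint spectral measure of $x$.
\item \emph{Entropy.} All $x_t$ have spectra in the fixed interval and converge in $L_2$. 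Functional calculus by a Lipschitz function on that interval is $L_2$-continuous, via polynomial approximation together with the uniform operator bound. Hence $\tau(x_t^2\log x_t^2)\to\tau((\mathbb Ex)^2\log(\mathbb Ex)^2)$, and the normalization term converges likewise.
\end{itemize}

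\emph{Conclusion.} Integrating the derivative on $[0,T]$ and letting $T\to\infty$ gives \eqref{Eqn: General integrated deficit} as an improper integral. The final claim follows immediately, since both terms on the right of \eqref{Eqn: General integrated deficit} are then nonnegative.

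The main obstacle is the trace-differentiation step. One must justify differentiating $\tau(\phi(x_t))$ when $x_t$ is only known to be $C^1$ in $L_2$, not in operator norm. This is handled as described above, via polynomial approximation of $\phi$ with uniform operator bounds on $x_t$.
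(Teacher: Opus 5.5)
Your proposal is correct and takes the same route as the paper: you differentiate $\mathcal F_A(x_t)$ termwise, using joint spectral calculus for the energy and the trace chain rule for the entropy, justified by the uniform bounds $\varepsilon\mathbf1\le x_t\le\norm{x}_\infty\mathbf1$; you then pass to the limit via the spectral convergence $x_t\to\mathbb Ex$ and integrate on $[0,T]$. You simply spell out the polynomial-approximation justification and the cancellation of the two $2\mathcal E_L(x_t)$ terms, which the paper leaves implicit; note also that the derivative at $t=0$ is one-sided, as the paper remarks.
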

\begin{proof}
Strong commutation gives $x_t\in\Dom(A)\cap\Dom(L)$, and $x_t'=-Lx_t$ in $L_2$. Joint spectral calculus and the trace chain rule, justified by the Markov bounds $\varepsilon\mathbf1\le x_t\le\norm{x}_\infty\mathbf1$, give
\[
\frac{\dd}{\dd t}\mathcal F_A(x_t)=-4\operatorname{Re}\inner{Lx_t,Ax_t-x_t\log x_t}_2-4\mathcal E_L(x_t)\log\norm{x_t}_2=-4\mathcal Q_{A;L}(x_t),
\]
with the right derivative at $t=0$.

Joint spectral calculus also gives $x_t\to\mathbb{E}x$ in the form norm of $A$. The common spectral bounds imply $\Ent_\tau(x_t^2)\to\Ent_\tau((\mathbb{E}x)^2)$, so $\mathcal F_A(x_t)\to\mathcal F_A(\mathbb{E}x)$. Integrating~\eqref{Eqn: Deficit derivative comparison} on $[0,T]$ and letting $T\to\infty$ proves~\eqref{Eqn: General integrated deficit}.
\end{proof}

To estimate the logarithmic pairing $\operatorname{Re}\inner{Lx,x\log x}_2$ in $\mathcal Q_{A;L}(x)$, suppose that $L=\delta^*\delta$ satisfies the hypotheses of Lemma~\ref{Lem: Endpoint representation}. Take a uniformly positive $x\in\mathcal A_0\cap\Dom(A)$ with $\mathcal E_L(x)>0$ and apply Lemma~\ref{Lem: Endpoint representation} to $f(s)=s\log s$. Since
\[
f^{[1]}(s,t)=1+\int_0^1\log((1-v)s+vt)\,\dd v,
\]
concavity of the logarithm, integrated against the probability measure $\mathcal E_L(x)^{-1}\mu_x\otimes \dd v$, gives
\begin{equation}\label{Ineq: General spectral logarithmic bound}
\operatorname{Re}\inner{Lx,x\log x}_2\le\mathcal E_L(x)\left(1+\log\frac{\operatorname{Re}\inner{Lx,x^2}_2}{2\mathcal E_L(x)}\right).
\end{equation}
The ratio in the logarithm is positive because $x\ge\varepsilon\mathbf1$. It is therefore enough to prove the cubic estimate
\begin{equation}\label{Ineq: General sufficient cubic bound}
\frac{\operatorname{Re}\inner{Lx,x^2}_2}{2\mathcal E_L(x)}\le\norm x_2\exp\left(\frac{\operatorname{Re}\inner{Lx,Ax}_2}{\mathcal E_L(x)}-1\right),
\end{equation}
which implies $\mathcal Q_{A;L}(x)\ge0$. If $\mathcal E_L(x)=0$, then $Lx=0$ and $\mathcal Q_{A;L}(x)=0$. When the fixed-point algebra of $e^{-tL}$ consists of scalars, as in the free-group case with $L=A$, we have $\mathcal F_A(\mathbb{E}x)=0$. Section~\ref{Sec: Free group case} proves the required cubic estimate for the free-group word-length generator with $L=A$ and hence proves LSI for free groups.

For crossed products by $G=\mathbb Z$ or $G=\mathbb Z_n$, with $n=2$ or $n\ge4$, we take $L=\mathsf N$ and, for a bounded positive Fourier polynomial $x=\sum_kU^ka_k$, consider the deficit
\[
\mathcal F_G(x)=2\mathcal E_{\mathsf N}(x)+\sum_k\Ent_{\tau_{\mathcal N}}(|a_k|^2)-\Ent_\tau(x^2).
\]
For bounded Fourier polynomials with $a_k\in\Dom(B^{1/2})$, nonnegativity of $\mathcal F_G$ and \eqref{Ineq: Base LSI for general element} give
\[
\Ent_\tau(x^2)\le\sum_k\Ent_{\tau_{\mathcal N}}(|a_k|^2)+2\mathcal E_{\mathsf N}(x)\le C_B\mathcal E_{\widehat B}(x)+2\mathcal E_{\mathsf N}(x),
\]
which is the desired LSI for these crossed-product polynomials.

For a uniformly positive Fourier polynomial, write $x_t=e^{-t\mathsf N}x=\sum_ke^{-t\ell(k)}U^ka_k$. Homogeneity of entropy gives
\[
\Ent_{\tau_{\mathcal N}}(|e^{-t\ell(k)}a_k|^2)=e^{-2t\ell(k)}\Ent_{\tau_{\mathcal N}}(|a_k|^2).
\]
Thus Proposition~\ref{Prop: General interpolation}, applied with $A=L=\mathsf N$, yields
\[
\mathcal Q_G(x)=\mathcal Q_{\mathsf N;\mathsf N}(x)+\frac12\sum_k\ell(k)\Ent_{\tau_{\mathcal N}}(|a_k|^2),\qquad \frac{\dd}{\dd t}\mathcal F_G(x_t)=-4\mathcal Q_G(x_t).
\]
Since $x_t\to a_0$ in operator norm, the surviving coefficient entropy cancels the entropy of $a_0^2$, and $\mathcal F_G(x_t)\to0$. It therefore remains to prove $\mathcal Q_G(x)\ge0$ for uniformly positive Fourier polynomials.

To compare the weighted sum of entropies $\frac12\sum_k\ell(k)\Ent_{\tau_{\mathcal N}}(|a_k|^2)$ with $\operatorname{Re}\inner{\mathsf Nx,x\log x}_2$, we work directly on the weighted Hilbert bimodule $L_2(\widehat G,\nu;L_2(\mathcal M))$, using a representation motivated by the operator-valued transference of Chen, Xu and Yin~\cite[Section~2.3]{MR3079331}. For the measure $\nu$ in~\eqref{Eqn: Crossed jump measure}, define the closed operator
\[
\delta:\Dom(\mathsf N^{1/2})\subset L_2(\mathcal M)\longrightarrow L_2(\widehat G,\nu;L_2(\mathcal M)),\qquad (\delta x)(\theta)=x-\alpha_\theta(x).
\]
Equip the target space with the bimodule actions
\[
(a\cdot v\cdot b)(\theta)=a\,v(\theta)\alpha_\theta(b),\qquad a,b\in\mathcal M.
\]
Then $\delta$ satisfies the Leibniz rule on $\mathcal M\cap\Dom(\mathsf N^{1/2})$, and~\eqref{Eqn: Crossed jump form} gives $\delta^*\delta=\mathsf N$. This gives a concrete Hilbert bimodule realization of the Dirichlet form $\mathcal E_{\mathsf N}$, as in~\cite[Section~4.3]{MR2463708}.

For $x\ne P_0x$, we have $\mathcal E_{\mathsf N}(x)>0$. Put $u=\sum_{k\ne0}\norm{a_k}_2\chi_k$ on $\widehat G$, and let $\mathsf N_G\chi_k=\ell(k)\chi_k$. The symmetry $\norm{a_{-k}}_2=\norm{a_k}_2$ makes $u$ real, with zero mean. In Section~\ref{Sec: Main proof}, we represent the weighted sum of entropies and estimate the logarithmic pairing $\operatorname{Re}\inner{\mathsf Nx,x\log x}_2$ by quadratic forms evaluated at $\delta x$ on this bimodule. Comparing these forms by resolvents gives the analogue of~\eqref{Ineq: General spectral logarithmic bound}:
\[
\operatorname{Re}\inner{\mathsf Nx,x\log x}_2-\frac12\sum_k\ell(k)\Ent_{\tau_{\mathcal N}}(|a_k|^2)\le\mathcal E_{\mathsf N}(x)\left[1+\log\left(\norm{a_0}_2+\frac{\inner{\mathsf N_Gu,u^2}_2}{2\mathcal E_{\mathsf N}(x)}\right)\right].
\]
The scalar estimate
\[
\bigl|\inner{\mathsf N_Gu,u^2}_2\bigr|^2\le4\norm u_2^2\left(\norm{\mathsf N_Gu}_2^4-\inner{u,\mathsf N_Gu}_2^2\right)
\]
gives the analogue of~\eqref{Ineq: General sufficient cubic bound},
\[
\norm{a_0}_2+\frac{\inner{\mathsf N_Gu,u^2}_2}{2\mathcal E_{\mathsf N}(x)}\le\norm x_2\exp\left(\frac{\norm{\mathsf Nx}_2^2}{\mathcal E_{\mathsf N}(x)}-1\right),
\]
which implies $\mathcal Q_G(x)\ge0$. If $x=P_0x$, then $\mathcal Q_G(x)=0$ directly. Integrating its nonnegativity gives $\mathcal F_G(x)\ge0$, and the LSI on $\mathcal N$ yields the asserted LSI for $A=\widehat B+\mathsf N$.

\section{Free group case}\label{Sec: Free group case}
Let $\mathbb F_n$ be the free group on $n$ generators, where $n\in\mathbb N=\{1,2,\ldots\}$, and put $\mathcal M=\mathcal L(\mathbb F_n)$, equipped with its canonical tracial state $\tau$. For the reduced word length $|g|$, set
\[
A\lambda_g=|g|\lambda_g,\qquad P_t\lambda_g=e^{-t|g|}\lambda_g.
\]
These multipliers form a symmetric, unital, trace-preserving completely Markov semigroup; see~\cite[Lemmas~1.1--1.2]{MR520930}. We take $L=A$ as the interpolation generator and prove the sharp LSI below by spectral calculus and a direct estimate of $\operatorname{Re}\inner{Ax,x^2}_2$. Xie and Zhang~\cite[Theorem~1.2]{xiezhang2026hypercontractivityfreegroup} established the same LSI using scalar third-moment estimates. Our proof uses the length-weighted pairing and is independent of those moment bounds.

\begin{theorem}\label{Thm: Free group LSI}
For every $x\in\mathbb C[\mathbb F_n]_+$, we have
\begin{equation}\label{Ineq: LSI for free group}
\Ent_\tau(x^2)\le 2\mathcal E_A(x).
\end{equation}
The constant $2$ is optimal.
\end{theorem}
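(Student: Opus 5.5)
The plan is to apply Proposition~\ref{Prop: General interpolation} with $L=A$ and $\mathcal A_0=\mathbb C[\mathbb F_n]$. By a perturbation $x\mapsto x+\varepsilon\mathbf1$ and letting $\varepsilon\downarrow0$, we may assume $x\ge\varepsilon\mathbf1$. The perturbation is harmless: $\mathcal E_A$ is unchanged because $A\mathbf1=0$, and $\Ent_\tau$ is continuous on the finite-dimensional span of the support. The fixed-point algebra of $e^{-tA}$ is $\mathbb C\mathbf1$, so $\mathcal F_A(\mathbb Ex)=0$, and $x_t=e^{-tA}x$ stays in $\mathbb C[\mathbb F_n]$ and satisfies $x_t\ge\varepsilon\mathbf1$. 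It therefore suffices to prove $\mathcal Q_{A;A}(y)\ge0$ for every uniformly positive $y\in\mathbb C[\mathbb F_n]$.

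For the derivation we use the standard tree cocycle. Let $\mathcal H=\ell_2(E)$ over the edges, with the bimodule actions built from $\lambda$ and the geometry of the Cayley tree, or equivalently the Haagerup conditionally negative length. With this choice $\delta^*\delta=A$, the Leibniz rule holds on $\mathbb C[\mathbb F_n]$, and Lemma~\ref{Lem: Endpoint representation} applies. Then \eqref{Ineq: General spectral logarithmic bound} holds, and the task reduces to the cubic estimate \eqref{Ineq: General sufficient cubic bound} with $L=A$:
\[
\frac{\operatorname{Re}\inner{Ax,x^2}_2}{2\mathcal E_A(x)}\le\norm x_2\exp\Bigl(\frac{\norm{Ax}_2^2}{\mathcal E_A(x)}-1\Bigr).
\]
Since $A\mathbf1=0$, split $x=c\mathbf1+u$ with $c=\tau(x)>0$ and $\tau(u)=0$. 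Then $\operatorname{Re}\inner{Ax,x^2}_2=2c\,\mathcal E_A(u)+\operatorname{Re}\inner{Au,u^2}_2$, and the inequality becomes
\[
c+\frac{\operatorname{Re}\inner{Au,u^2}_2}{2\mathcal E_A(u)}\le\sqrt{c^2+\norm u_2^2}\,e^{r-1},\qquad r=\frac{\norm{Au}_2^2}{\mathcal E_A(u)}\ge1.
\]

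The main obstacle is the cubic bound for $\operatorname{Re}\inner{Au,u^2}_2$ on free groups. I would aim for the analogue of the scalar estimate stated for crossed products,
\[
\bigl|\operatorname{Re}\inner{Au,u^2}_2\bigr|^2\le4\norm u_2^2\bigl(\norm{Au}_2^4-\mathcal E_A(u)^2\bigr).
\]
To prove it, expand $\inner{Au,u^2}_2=\sum_{gh k=e}|g|\,\overline{\hat u(g)}\ldots$ over triples $g,h,k$ with $ghk=e$. In a tree each such triangle is a tripod, so $|g|=a+b$, $|h|=b+c$, $|k|=c+a$ for arm lengths $a,b,c\ge0$. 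Symmetrizing over the three positions gives weights $|g|+|h|+|k|$ and reduces the sum to terms with kernel $\tfrac12(|g|+|h|-|k|)=b$, the overlap length, in analogy with \eqref{Eqn: Crossed jump identities}. Grouping by the common arm length and applying Cauchy--Schwarz (Haagerup-type convolution bounds on words of fixed length) should give a bound of the form $2\norm u_2\bigl(\sum_g|g|^2|\hat u(g)|^2\bigr)^{1/2}(\cdots)$. Refining this to the variance form $\norm{Au}_2^4-\mathcal E_A(u)^2$ is where I expect the real difficulty, and I would first test it on $u=\lambda_s+\lambda_{s^{-1}}$ and radial functions.

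Granting the cubic bound, the inequality reduces to a scalar one. With $s=\norm u_2$ and $\mathcal E_A(u)\ge s^2$ (lengths are at least $1$ off $e$), the left side is at most $c+s\sqrt{r^2-1}$, and it suffices that $c+s\sqrt{r^2-1}\le\sqrt{c^2+s^2}\,e^{r-1}$. By Cauchy--Schwarz, $c+s\sqrt{r^2-1}\le\sqrt{c^2+s^2}\,r$, and $r\le e^{r-1}$, which closes the argument. Optimality of the constant $2$ follows by taking $x=\mathbf1+\varepsilon(\lambda_s+\lambda_{s^{-1}})$ and expanding both sides to second order in $\varepsilon$: $\Ent_\tau(x^2)\approx2\norm{x-\tau(x)}_2^2\cdot 2=8\varepsilon^2$ against $\mathcal E_A(x)=2\varepsilon^2$, which forces ratio $2$ in the limit; equivalently, restrict to the copy of $\mathcal L(\mathbb Z)$ generated by $\lambda_s$ and use the optimal constant there.
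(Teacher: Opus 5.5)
Your reduction is the paper's: interpolation with $L=A$, Haagerup's tree derivation, the spectral bound \eqref{Ineq: General spectral logarithmic bound}, and the splitting $x=\tau(x)\mathbf1+u$. The gap is the cubic estimate itself. You leave it unproved, and the bound you aim for,
\[
\bigl|\operatorname{Re}\inner{Au,u^2}_2\bigr|^2\le4\norm u_2^2\bigl(\norm{Au}_2^4-\mathcal E_A(u)^2\bigr),
\]
is false on $\mathbb F_n$ for every $n\ge2$. This is the form of Lemma~\ref{Lem: Cyclic weighted cubic bound}. It works for $\mathbb Z$ because every zero-sum triple there has the degenerate form $i+j=k$. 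The tree, however, also has genuine tripods $i=a_1+a_2$, $j=a_2+a_3$, $k=a_3+a_1$ with $a_1,a_2,a_3\ge1$.

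Take the radial element $u=\sum_{k=1}^N|S_k|^{-1/2}\Lambda_k$ with $\Lambda_k=\sum_{g\in S_k}\lambda_g$, so that $b_k=1$ for $1\le k\le N$. For fixed $g\in S_i$ there are $(2n-2)(2n-1)^{a_3-1}$ words $h\in S_j$ with $|gh|=k$. Hence $\tau(u_iu_ju_k)=\gamma_n:=(2n-2)/\sqrt{2n(2n-1)}$ for every interior triple; note $\gamma_1=0$, consistent with $\mathbb Z$. All triple traces are nonnegative, so
\[
\operatorname{Re}\inner{Au,u^2}_2\ge\gamma_n\sum_{\substack{a_1,a_2,a_3\ge1\\ a_1+a_2,\ a_2+a_3,\ a_3+a_1\le N}}\frac{2(a_1+a_2+a_3)}{3}\sim\frac{7\gamma_n}{48}N^4.
\]
On the other side, $2\norm u_2\bigl(\norm{Au}_2^4-\mathcal E_A(u)^2\bigr)^{1/2}=\frac23N^{3/2}(N+1)\sqrt{(N-1)(N+2)}\sim\frac23N^{7/2}$. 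So the target fails for large $N$, and your closing step, which is the crossed-product argument verbatim, has nothing to rest on.

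What is missing is Lemma~\ref{Lem: Free cubic bound} together with a sharper scalar comparison.
\begin{itemize}
\item The boundary tripods (one arm zero) are handled by Cauchy--Schwarz, giving $2\norm u_2\sqrt{2\inner{Au,u}_2\inner{Au,(A-I)u}_2}$.
\item The interior tripods are written as $2\operatorname{Tr}(DM^3)$ with $M=(b_{p+q})_{1\le p,q\le N}$ and $D=\operatorname{diag}(1,\dots,N)$. The operator inequality $M^3\le\norm{M}_{\mathrm{op}}M^2$ bounds them by $\inner{Au,(A-I)u}_2\sqrt{\inner{Au,u}_2-\norm u_2^2}$, which is of order $N^4$ in the example above.
\end{itemize}
With $\rho=r-1$, this only shows that the left side of your reduced inequality is at most $\norm x_2\sqrt{1+2\rho+\sqrt2\rho^2+\frac14\rho^3}$. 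For $\rho>0$ that exceeds $\norm x_2\,r$, so $r\le e^{r-1}$ is not enough. One must use the cubic Taylor term of the exponential:
\[
e^{2\rho}\ge1+2\rho+2\rho^2+\tfrac43\rho^3>1+2\rho+\sqrt2\rho^2+\tfrac14\rho^3.
\]

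There is also a smaller slip in your optimality check. For $h=\lambda_{s_1}+\lambda_{s_1}^*$ one has $\Ent_\tau((\mathbf1+\varepsilon h)^2)=2\varepsilon^2\norm h_2^2+O(\varepsilon^3)=4\varepsilon^2+O(\varepsilon^3)$, not $8\varepsilon^2$. Your value, against $\mathcal E_A(\mathbf1+\varepsilon h)=2\varepsilon^2$, would give ratio $4$ and contradict the inequality being proved.
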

We apply Section~\ref{Sec: Semigroup interpolation} with $L=A$, where $A\lambda_g=|g|\lambda_g$. Thus
\[
\begin{aligned}
\mathcal F_A(x)&=2\mathcal E_A(x)-\Ent_\tau(x^2),\\
\mathcal Q_{A;A}(x)&=\operatorname{Re}\inner{Ax,Ax-x\log x}_2+\mathcal E_A(x)\log\norm x_2.
\end{aligned}
\]
Proposition~\ref{Prop: General interpolation} gives $\frac{\dd}{\dd t}\mathcal F_A(e^{-tA}x)=-4\mathcal Q_{A;A}(e^{-tA}x)$. The fixed-point algebra of $e^{-tA}$ consists of scalars, so the limit of the deficit is zero. Theorem~\ref{Thm: Free group LSI} is proved by showing $\mathcal{Q}_{A;A}(x)\geq 0$.

\subsection{The word-length derivation and its spectral representation}
We use Haagerup's tree construction in \cite[Proof of Lemma~1.2]{MR520930} to construct the derivation. Orient the edges of the Cayley tree of $\mathbb{F}_n$ from $h$ to $hs_j$, where $s_1,\ldots,s_n$ are the generators. Let $\mathsf E_{\mathbb R}$ have orthonormal basis $\varepsilon_{h,j}$ indexed by these edges; reversing an edge changes its sign. Left translation defines $\pi(g)\varepsilon_{h,j}=\varepsilon_{gh,j}$. If $c(g)$ is the signed sum of the edges on the path from $e$ to $g$, cancellation of backtracking edges gives
\begin{equation}\label{Eqn: Free word cocycle}
c(gh)=c(g)+\pi(g)c(h),\qquad \norm{c(g)}^2=|g|.
\end{equation}
Following \cite[Example~2.2, pp.~421--422]{MR2470111}, let $\mathsf E_{\mathbb C}$ be the complexification of $\mathsf E_{\mathbb R}$ and equip $\mathcal H=\mathsf E_{\mathbb C}\otimes L_2(\mathcal M,\tau)$ with the actions
\begin{equation}\label{Eqn: Free bimodule actions}
\lambda_g\cdot(\xi\otimes\lambda_h)=\pi(g)\xi\otimes\lambda_{gh},\qquad (\xi\otimes\lambda_h)\cdot\lambda_k=\xi\otimes\lambda_{hk}.
\end{equation}
These actions commute. The right action is amplified right multiplication, while the unitary $W(\xi\otimes\lambda_h)=\pi(h^{-1})\xi\otimes\lambda_h$ conjugates the left action to amplified left multiplication. Thus both actions extend normally to $\mathcal M$, making $\mathcal H$ a Hilbert $\mathcal M$-bimodule.

Define $\delta$ on $\mathbb C[\mathbb F_n]$ by extending $\delta(\lambda_g)=c(g)\otimes\lambda_g$ linearly. The cocycle identity gives
\[
\delta(vw)=v\cdot\delta w+\delta v\cdot w, \qquad v,w\in\mathbb C[\mathbb F_n],
\]
and
\begin{equation}\label{Eqn: Free derivation form}
\inner{\delta v,\delta w}_{\mathcal H} =\sum_g|g|\,\overline{\widehat v(g)}\widehat w(g) =\inner{Av,w}_2.
\end{equation}
Orthogonality of the Fourier basis shows that $\delta$ is closable. Its closure, denoted by the same symbol, satisfies $\Dom(\delta)=\Dom(A^{1/2})$, $\delta^*\delta=A$, and $\ker\delta=\mathbb C\mathbf1$. In particular, $\mathcal E_A(x)=\norm{\delta x}_{\mathcal H}^2$, in agreement with the derivation framework of~\cite{MR1986156}. Lemma~\ref{Lem: Endpoint representation} applies with $\mathcal A_0=\mathbb C[\mathbb F_n]$ and $L=A$.

For $x\in\mathbb C[\mathbb F_n]$ with $x\ge\varepsilon\mathbf1>0$, apply the lemma to $f(s)=s$, $f(s)=s^2$, and $f(s)=s\log s$ on the compact spectral interval of $x$. This gives
\begin{equation}\label{Eqn: Free spectral comparison}
\begin{split}
\inner{Ax,x}_2&=\int_{\sigma(x)^2}\mathrm d\mu_x(s,t),\qquad \operatorname{Re}\inner{Ax,x^2}_2=\int_{\sigma(x)^2}(s+t)\,\mathrm d\mu_x(s,t),\\
\operatorname{Re}\inner{Ax,x\log x}_2 &=\int_{\sigma(x)^2}\int_0^1[1+\log((1-v)s+vt)]\,\mathrm dv\,\mathrm d\mu_x(s,t).
\end{split}
\end{equation}
The last identity follows by integrating the derivative of $s\log s$ along the segment from $s$ to $t$. We next bound the logarithmic pairing in terms of $\operatorname{Re}\inner{Ax,x^2}_2$.

\subsection{Reduction to a polynomial inequality}\label{Sec: Free polynomial reduction}

We estimate $\mathcal Q_{A;A}$ from~\eqref{Eqn: General comparison} on uniformly positive Fourier polynomials. All pairings in this subsection are in $L_2(\mathcal L(\mathbb F_n))$, and all functions of $x$ are defined by bounded spectral calculus.

\begin{lemma}\label{Lem: Free endpoint transform}
For nonconstant $x\in\mathbb C[\mathbb F_n]$ with $x\ge\varepsilon\mathbf1>0$, put
\[
z=\frac{\operatorname{Re}\inner{Ax,x^2}_2}{2\inner{Ax,x}_2}.
\]
Then $z>0$ and
\begin{equation}\label{Ineq: Free endpoint gap}
\operatorname{Re}\inner{Ax,x^2-2x\log x}_2 \ge2\inner{Ax,x}_2(z-1-\log z).
\end{equation}
Equivalently,
\begin{equation}\label{Eqn: Free endpoint bound}
\operatorname{Re}\inner{Ax,x\log x}_2 \le\inner{Ax,x}_2(1+\log z).
\end{equation}
Consequently,
\begin{equation}\label{Ineq: Free transformed comparison}
\mathcal Q_{A;A}(x) \ge\inner{Ax,x}_2\left( \frac{\norm{Ax}_2^2}{\inner{Ax,x}_2}-1-\log z+\log\norm x_2 \right).
\end{equation}
\end{lemma}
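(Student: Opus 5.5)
We work directly with the spectral representation \eqref{Eqn: Free spectral comparison}, that is, with Lemma~\ref{Lem: Endpoint representation} applied to $\delta$ with $\mathcal A_0=\mathbb C[\mathbb F_n]$ and $L=A$. Since $x$ is nonconstant and $\ker\delta=\mathbb C\mathbf1$, we have $\inner{Ax,x}_2=\mu_x(\sigma(x)^2)>0$. We therefore normalize and consider the probability measure
\[
\rho=\inner{Ax,x}_2^{-1}\,\mu_x\otimes\dd v \quad\text{on } \sigma(x)^2\times[0,1].
\]
Put $w(s,t,v)=(1-v)s+vt$. Because $\sigma(x)\subset[\varepsilon,\norm x_\infty]$, we have $w\ge\varepsilon>0$ on the support of $\rho$. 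Since $\int_0^1 w\,\dd v=(s+t)/2$, the cubic identity in \eqref{Eqn: Free spectral comparison} gives
\[
\int w\,\dd\rho=\frac{\operatorname{Re}\inner{Ax,x^2}_2}{2\inner{Ax,x}_2}=z.
\]
Positivity of $w$ and of $\rho$ then yields $z\ge\varepsilon>0$.

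For \eqref{Eqn: Free endpoint bound}, the last line of \eqref{Eqn: Free spectral comparison} reads
\[
\operatorname{Re}\inner{Ax,x\log x}_2=\inner{Ax,x}_2\int(1+\log w)\,\dd\rho .
\]
Jensen's inequality for the concave logarithm gives $\int\log w\,\dd\rho\le\log\int w\,\dd\rho=\log z$, which is exactly \eqref{Eqn: Free endpoint bound}. Inequality \eqref{Ineq: Free endpoint gap} is a rearrangement of this. Writing $\operatorname{Re}\inner{Ax,x^2}_2=2z\inner{Ax,x}_2$, we get
\[
\operatorname{Re}\inner{Ax,x^2-2x\log x}_2\ge 2z\inner{Ax,x}_2-2\inner{Ax,x}_2(1+\log z),
\]
and conversely \eqref{Ineq: Free endpoint gap} implies \eqref{Eqn: Free endpoint bound} by the same identity. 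Hence the two statements are equivalent. (Alternatively, one could integrate the pointwise bound $w-1-\log w\ge0$, but that only gives the weaker gap $\int(w-1-\log w)\,\dd\rho$. Jensen gives the sharp form with $z$ directly.)

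For \eqref{Ineq: Free transformed comparison}, insert \eqref{Eqn: Free endpoint bound} into the definition \eqref{Eqn: General comparison} with $L=A$. Here $\operatorname{Re}\inner{Ax,Ax}_2=\norm{Ax}_2^2$ and $\mathcal E_A(x)=\inner{Ax,x}_2$, so
\[
\mathcal Q_{A;A}(x)\ge \norm{Ax}_2^2-\inner{Ax,x}_2(1+\log z)+\inner{Ax,x}_2\log\norm x_2 .
\]
Factoring out $\inner{Ax,x}_2>0$ gives the claim.

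The only point requiring care is the justification of the spectral identity for $f(s)=s\log s$. This function is $C^1$ only on an interval avoiding $0$, which is why uniform positivity $x\ge\varepsilon\mathbf1$ is assumed. Lemma~\ref{Lem: Endpoint representation} already covers this case on the interval $[\varepsilon,\norm x_\infty]$. Everything else is a direct application of Jensen's inequality.
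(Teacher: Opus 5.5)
Your proof is correct and is essentially the paper's: both normalize $\mu_x\otimes\mathrm{d}v$ by $\inner{Ax,x}_2$, identify $z$ as the mean of $(1-v)s+vt$ (which also gives $z\ge\varepsilon>0$), and apply Jensen. The only difference is cosmetic: the paper applies Jensen to the convex function $s-1-\log s$ to get the gap inequality first, while you apply it to the concave logarithm to get the bound on $\operatorname{Re}\inner{Ax,x\log x}_2$ first, which is the same inequality since the linear part integrates exactly to $z-1$.
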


\begin{proof}
Since $x$ is nonconstant, $\inner{Ax,x}_2>0$. The measure $\mu_x$ is supported in $[\varepsilon,\norm x_\infty]^2$, and therefore
\begin{equation}\label{Eqn: Free cubic positivity}
\operatorname{Re}\inner{Ax,x^2}_2 =\int_{\sigma(x)^2}(s+t)\,\mathrm d\mu_x(s,t) \ge2\varepsilon\inner{Ax,x}_2>0.
\end{equation}
In particular, $z\ge\varepsilon>0$. Put $\varphi(s)=s-1-\log s$ and $f(s)=s^2-2s\log s$. Since $f'=2\varphi$, Lemma~\ref{Lem: Endpoint representation} gives
\[
\operatorname{Re}\inner{Ax,f(x)}_2 =2\int_{\sigma(x)^2}\int_0^1\varphi((1-v)s+vt)\,\mathrm dv\,\mathrm d\mu_x(s,t).
\]
The measure $\mu_x\otimes\mathrm dv/\inner{Ax,x}_2$ is a probability measure. Jensen's inequality for the convex function $\varphi$ gives
\[
\frac{\operatorname{Re}\inner{Ax,f(x)}_2}{2\inner{Ax,x}_2}\ge\varphi\left( \int_{\sigma(x)^2}\int_0^1((1-v)s+vt)\frac{\mathrm dv\,\mathrm d\mu_x(s,t)}{\inner{Ax,x}_2}\right)=\varphi\left(\frac{1}{2\inner{Ax,x}_2} \int_{\sigma(x)^2}(s+t)\,\mathrm d\mu_x(s,t)\right) =\varphi(z).
\]
The last equality follows from~\eqref{Eqn: Free spectral comparison}. This proves~\eqref{Ineq: Free endpoint gap}. Using $\operatorname{Re}\inner{Ax,x^2}_2=2z\inner{Ax,x}_2$ gives \eqref{Eqn: Free endpoint bound}, and substitution into $\mathcal Q_{A;A}(x)$ yields~\eqref{Ineq: Free transformed comparison}.
\end{proof}

Write $S_k=\{g\in\mathbb F_n:|g|=k\}$ and let $\Pi_kv=\sum_{g\in S_k}\widehat v(g)\lambda_g$ be the projection onto the Fourier terms of length $k$. For self-adjoint $x\in\mathbb C[\mathbb F_n]$, write
\begin{equation}\label{Eqn: Free centering}
u=x-\tau(x)\mathbf1,\qquad u_k=\Pi_ku,\qquad b_k=\norm{u_k}_2.
\end{equation}
Since $\tau(u)=0$, the spectral gap of $A$ gives
\begin{equation}\label{Eqn: Free higher energy}
\inner{Au,(A-I)u}_2=\norm{Au}_2^2-\inner{Au,u}_2=\sum_{k\ge1}k(k-1)b_k^2\ge0.
\end{equation}
Since $A\mathbf1=0$ and $\tau(Au)=0$, expanding $x^2=\tau(x)^2\mathbf1+2\tau(x)u+u^2$ gives
\begin{equation}\label{Eqn: Free centered cubic pairing}
\operatorname{Re}\inner{Ax,x^2}_2=2\tau(x)\inner{Au,u}_2+\operatorname{Re}\inner{Au,u^2}_2.
\end{equation}
Moreover, $\inner{Au,u}_2\ge\norm u_2^2>0$ when $x$ is nonconstant.

\begin{lemma}\label{Lem: Free cubic bound}
For every self-adjoint $u\in\mathbb C[\mathbb F_n]$ with $\tau(u)=0$,
\begin{equation}\label{Ineq: Free cubic bound}
\operatorname{Re}\inner{Au,u^2}_2\le2\norm u_2\sqrt{2\inner{Au,u}_2\inner{Au,(A-I)u}_2}+\inner{Au,(A-I)u}_2 \sqrt{\inner{Au,u}_2-\norm u_2^2}.
\end{equation}
\end{lemma}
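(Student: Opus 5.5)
The plan is to pass from operators to scalar sequences through the tree geometry of $\mathbb F_n$, and then prove a scalar inequality for the coefficients $b_k=\norm{u_k}_2$ from~\eqref{Eqn: Free centering}. Since $u$ is self-adjoint, $\operatorname{Re}\inner{Au,u^2}_2=\tau((Au)u^2)=\sum_{i,j,k\ge1} i\,\tau(u_iu_ju_k)$.

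\emph{Step 1: which triples survive.} A term $\tau(\lambda_g\lambda_h\lambda_l)$ is nonzero only when $ghl=e$. In the Cayley tree the three geodesics then form a tripod with legs $p,q,r\ge0$, so that $i=p+q$, $j=q+r$, $k=r+p$. Hence only triples with $i+j+k$ even and satisfying all three triangle inequalities contribute.

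\emph{Step 2: Haagerup-type bound.} For each admissible triple I would prove $|\tau(u_iu_ju_k)|\le b_ib_jb_k$. The tripod legs are uniquely determined by $(i,j,k)$. Writing each $g\in S_i$ as a reduced product of its two tripod pieces, the trace becomes a matrix product $\sum_{\alpha,\beta,\gamma}X_{\alpha\beta}Y_{\beta\gamma}Z_{\gamma\alpha}$ over middle words. Two applications of Cauchy--Schwarz bound this by the product of the Hilbert--Schmidt norms, which is the standard argument behind Haagerup's inequality. This reduces the lemma to
\[
\sum_{(i,j,k)\ \mathrm{adm}} i\,b_ib_jb_k\le 2\Bigl(\sum b_k^2\Bigr)^{1/2}\Bigl(2\sum kb_k^2\sum k(k-1)b_k^2\Bigr)^{1/2}+\sum k(k-1)b_k^2\Bigl(\sum(k-1)b_k^2\Bigr)^{1/2},
\]
using $\inner{Au,u}_2-\norm u_2^2=\sum(k-1)b_k^2$ and~\eqref{Eqn: Free higher energy}.

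\emph{Step 3: the scalar inequality.} I would symmetrize, replacing the weight $i$ by $\tfrac13(i+j+k)$, and order the indices so that $m\le j\le k$. The triangle inequality then gives $k-j\le m$, with $k-j\equiv m \pmod 2$. I would split according to the size of the smallest index $m$.

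For $m=1$ we are forced to have $k=j+1$. I would regroup the weight as $(2j+2)$, splitting it as $\sqrt{j}\cdot\sqrt{j(j+1)}$ up to constants. This pairs $b_1 b_j b_{j+1}$ so that Cauchy--Schwarz produces $\norm u_2\sqrt{\sum kb_k^2\sum k(k-1)b_k^2}$. This should be the source of the first term, including its constant $2\sqrt2$.

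For $m\ge2$, the factor $b_m$ comes with $\sqrt{m-1}$ available. Summing over $m$ with weights $\sqrt{m-1}\,b_m$ should produce $\sqrt{\sum(k-1)b_k^2}$, while the remaining sum over $(j,k)$ with $|j-k|\le m$ is controlled by $\sum k(k-1)b_k^2$. That gives the second term. Where these two groupings overlap, I would move the surplus into the first term using $\norm u_2\le\bigl(\sum kb_k^2\bigr)^{1/2}$.

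\emph{Main obstacle.} The hardest step is Step 3 with the exact constants. The number of pairs $(j,k)$ with $|j-k|\le m$ grows with $m$, so naive Cauchy--Schwarz loses a factor of order $m$. To absorb this I would try to use the extra weight $k(k-1)$ in the higher energy, together with $j\ge m$. If that is not sharp enough, I would instead first fix $k$, the largest index, and sum over the tripod legs $p,q,r$ directly, with Cauchy--Schwarz taken in $(p,r)$.
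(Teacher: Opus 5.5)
Your Steps 1 and 2 are correct and match the paper. The tripod parametrization and the Haagerup-type bound $|\tau(u_iu_ju_k)|\le b_ib_jb_k$ (Lemma~\ref{lem:projection}) reduce the statement to exactly the scalar inequality you display. The gap is Step 3. It is the real content of the lemma, and you leave it as a heuristic with unverified constants; your fallback of fixing the largest index is equally unspecified.

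That scalar inequality has no slack. For $b_k=\lambda^k$ with $0<\lambda<1$, truncated at large length, the ratio of its two sides tends to $1$. So every step must be asymptotically lossless on geometric sequences, and the mechanisms you propose are not.

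\begin{itemize}
\item Your split by smallest index does not match the two terms. On geometric sequences, the first term is used up exactly by all degenerate triples $\{r,s,r+s\}$ (one vanishing tripod leg), whatever $\min(r,s)$ is. The second term is used up exactly by the nondegenerate triples with legs $r,s,t\ge1$. Your $m\ge2$ group therefore contains degenerate triples of the same order as the first term, and you have no lossless way to send them back.
\item The transfer via $\|u\|_2\le(\sum_k kb_k^2)^{1/2}$ cannot do this job. The first term contains $\|u\|_2$ itself, so you would need the reverse inequality. In any case it is strict by a fixed factor on geometric sequences.
\item The factor of order $m$ you anticipate from counting pairs $(j,k)$ with $|j-k|\le m$ would have to be absorbed with equality, and you give no weights that achieve this.
\end{itemize}

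The missing idea is the paper's organization of the sum. Sum over legs $r,s,t\ge0$ with $b_0=0$, and separate the triples with exactly one vanishing leg from those with all legs positive.

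\begin{itemize}
\item \textbf{Degenerate triples.} They contribute exactly $4\sum_{r,s\ge1}rb_rb_sb_{r+s}$. Two identities hold: $\sum_{r,s}rb_r^2b_s^2=\|u\|_2^2\langle Au,u\rangle_2$ and $\sum_{r,s}rb_{r+s}^2=\frac12\langle Au,(A-I)u\rangle_2$. With these, one Cauchy--Schwarz gives the first term.
\item \textbf{Nondegenerate triples.} They contribute exactly $2\operatorname{Tr}(DM^3)$, with the Hankel matrix $M=(b_{r+s})_{1\le r,s\le N}$ and $D=\operatorname{diag}(1,\dots,N)$. Since $M^3=|M|M|M|\le\|M\|_{\mathrm{op}}M^2$ and $\|M\|_{\mathrm{op}}\le\|M\|_{\mathrm{HS}}$, one gets $\operatorname{Tr}(DM^3)\le\|M\|_{\mathrm{HS}}\operatorname{Tr}(DM^2)$. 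Here $\|M\|_{\mathrm{HS}}^2=\sum_k(k-1)b_k^2=\langle Au,u\rangle_2-\|u\|_2^2$ and $\operatorname{Tr}(DM^2)=\frac12\langle Au,(A-I)u\rangle_2$, which gives exactly the second term.
\end{itemize}

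This operator-norm bound is what replaces your pair counting. It handles all overlaps at once instead of counting pairs. Both it and the Cauchy--Schwarz step are equalities when $M$ is positive and rank one, that is, for geometric $b$.
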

The proof is given in Section~\ref{sec: weighted cubic estimation for free group}. We first use the estimate to complete the scalar comparison.

\begin{lemma}\label{Lem: Free scalar comparison}
Let $x\in\mathbb C[\mathbb F_n]$ be self-adjoint and nonconstant, and put $u=x-\tau(x)\mathbf1$. Then
\begin{equation}\label{Ineq: Free strict scalar comparison}
z=\frac{\operatorname{Re}\inner{Ax,x^2}_2}{2\inner{Ax,x}_2} <\norm x_2\exp\left(\frac{\inner{Au,(A-I)u}_2}{\inner{Au,u}_2}\right).
\end{equation}
\end{lemma}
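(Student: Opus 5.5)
The plan is to combine the centered expansion \eqref{Eqn: Free centered cubic pairing} with the cubic bound of Lemma~\ref{Lem: Free cubic bound}. After normalizing $\norm u_2=1$, this leaves a one-variable inequality in the ratio $r=\inner{Au,(A-I)u}_2/\inner{Au,u}_2$, which we then check by a Taylor expansion. Write
\[
c=\tau(x)\in\mathbb R,\quad b=\norm u_2>0,\quad P=\inner{Au,u}_2,\quad Q=\inner{Au,(A-I)u}_2,\quad s=P-b^2,\quad r=Q/P.
\]
Since $\tau(u)=0$, we have $\norm x_2=\sqrt{c^2+b^2}$. By \eqref{Eqn: Free centered cubic pairing},
\[
z=c+\frac{\operatorname{Re}\inner{Au,u^2}_2}{2P}.
\]
Lemma~\ref{Lem: Free cubic bound} then gives
\[
z\le c+b\sqrt{2r}+\frac{r\sqrt s}{2}.
\]
Both sides of \eqref{Ineq: Free strict scalar comparison} are homogeneous of degree one in $x$ and $r$ is scale invariant, so we may assume $b=1$.

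Next we bound $s$ in terms of $r$. In the notation \eqref{Eqn: Free centering},
\[
s=\sum_{k\ge1}(k-1)b_k^2,\qquad Q=\sum_{k\ge1}k(k-1)b_k^2=s+\sum_{k\ge1}(k-1)^2b_k^2.
\]
Cauchy--Schwarz gives $s^2\le b^2\sum_k(k-1)^2b_k^2=Q-s$, that is, $Q\ge s(s+1)$. Hence
\[
r=\frac{Q}{1+s}\ge s.
\]
The last term is therefore at most $r^{3/2}/2$. So it suffices to prove
\[
c+\sqrt{2r}+\tfrac12r^{3/2}<\sqrt{c^2+1}\,e^r\qquad\text{for all } c\in\mathbb R,\ r\ge0.
\]

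We first remove $c$. An elementary calculus step (the tangency point is $c=(e^{2r}-1)^{-1/2}$) gives
\[
\sup_{c\in\mathbb R}\bigl(c-\sqrt{c^2+1}\,e^r\bigr)=-\sqrt{e^{2r}-1}\qquad\text{for } r>0.
\]
It then remains to show $\sqrt{2r}+\tfrac12r^{3/2}<\sqrt{e^{2r}-1}$ for $r>0$. Squaring, the left side becomes $2r+\sqrt2\,r^2+\tfrac14r^3$. The exponential series gives
\[
e^{2r}-1\ge2r+2r^2+\tfrac43r^3,
\]
and this dominates termwise, strictly because $2>\sqrt2$.

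The degenerate case $r=0$ is handled separately. Then $Q=0$, hence $s=0$, and Lemma~\ref{Lem: Free cubic bound} gives $\operatorname{Re}\inner{Au,u^2}_2\le0$. Therefore $z\le c<\sqrt{c^2+b^2}=\norm x_2$, where strictness uses $b>0$, i.e.\ that $x$ is nonconstant.

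The step most likely to fail is controlling the term with $\sqrt{\inner{Au,u}_2-\norm u_2^2}$ by $r$ alone. Lemma~\ref{Lem: Free cubic bound} mixes the scales $P$ and $b^2$, and the argument depends on the Cauchy--Schwarz bound $s^2\le b^2(Q-s)$ giving exactly $s\le r$. If that bound fell short, the fallback would be the cruder estimate $s\le Q/2$ (since $k-1\le k(k-1)/2$ for $k\ge2$), followed by a case split on $r<2$ versus $r\ge2$.
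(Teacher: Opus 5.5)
Your proof is correct and follows the paper's argument: the centered expansion, Lemma~\ref{Lem: Free cubic bound}, the bound $\inner{Au,u}_2-\norm u_2^2\le\rho\norm u_2^2$, and the comparison $e^{2\rho}\ge1+2\rho+2\rho^2+\tfrac43\rho^3$. Your Fourier-weight Cauchy--Schwarz $s^2\le b^2(Q-s)$ is algebraically the paper's $\inner{Au,u}_2^2\le\norm u_2^2\norm{Au}_2^2$, and your explicit supremum over $c$ is equivalent to the paper's two-dimensional Cauchy--Schwarz step $c+b\alpha\le\sqrt{c^2+b^2}\sqrt{1+\alpha^2}$ (with the case $\rho=0$ handled identically).
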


\begin{proof}
Put $\rho=\inner{Au,(A-I)u}_2/\inner{Au,u}_2\ge0$. Cauchy--Schwarz gives
\[
\inner{Au,u}_2^2\le\norm u_2^2\norm{Au}_2^2=(1+\rho)\norm u_2^2\inner{Au,u}_2,
\]
so $\inner{Au,u}_2-\norm u_2^2\le\rho\norm u_2^2$. Combining \eqref{Eqn: Free centered cubic pairing} and \eqref{Ineq: Free cubic bound} therefore yields
\begin{equation}\label{Ineq: Free centered scalar bound}
\frac{\operatorname{Re}\inner{Ax,x^2}_2}{2\inner{Ax,x}_2}=\tau(x)+\frac{\operatorname{Re}\inner{Au,u^2}_2}{2\inner{Au,u}_2}\le\tau(x)+\norm u_2\left(\sqrt{2\rho}+\frac{\rho^{3/2}}2\right).
\end{equation}
Since $\tau(x)^2+\norm u_2^2=\norm x_2^2$, another Cauchy--Schwarz inequality gives
\begin{equation}\label{Ineq: Free scalar polynomial}
\tau(x)+\norm u_2\left(\sqrt{2\rho}+\frac{\rho^{3/2}}2\right) \le\norm x_2\sqrt{1+2\rho+\sqrt2\,\rho^2+\tfrac14\rho^3}.
\end{equation}
For $\rho>0$,
\begin{equation}\label{Ineq: Free strict exponential comparison}
e^{2\rho}\ge1+2\rho+2\rho^2+\tfrac43\rho^3 >1+2\rho+\sqrt2\,\rho^2+\tfrac14\rho^3,
\end{equation}
which proves \eqref{Ineq: Free strict scalar comparison}. If $\rho=0$, then \eqref{Ineq: Free centered scalar bound} bounds the ratio by $\tau(x)<\norm x_2$, since $u\ne0$. This proves the remaining case.
\end{proof}
\begin{theorem}\label{Thm: Free polynomial comparison}
For every nonconstant $x\in\mathbb C[\mathbb F_n]$ with $x\ge\varepsilon\mathbf1>0$,
\begin{equation}\label{Ineq: Free homogeneous comparison}
\mathcal Q_{A;A}(x)= \operatorname{Re}\inner{Ax,Ax-x\log x}_2 +\mathcal E_A(x)\log\norm{x}_2>0.
\end{equation}
For a constant $x=c\mathbf1$ with $c>0$, $\mathcal Q_{A;A}(x)=0$.
\end{theorem}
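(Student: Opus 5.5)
The strategy is to combine Lemma~\ref{Lem: Free endpoint transform} with Lemma~\ref{Lem: Free scalar comparison}; the main remaining work is to match the exponent appearing in \eqref{Ineq: Free transformed comparison} with the one in \eqref{Ineq: Free strict scalar comparison}.

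Start with the constant case $x=c\mathbf1$. Here $Ax=0$, so every pairing in $\mathcal Q_{A;A}(x)$ vanishes and $\mathcal E_A(x)=0$. Hence $\mathcal Q_{A;A}(x)=0$ directly.

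Now let $x$ be nonconstant with $x\ge\varepsilon\mathbf1$. Then $\inner{Ax,x}_2>0$. By \eqref{Ineq: Free transformed comparison} it suffices to prove
\[
\log z<\frac{\norm{Ax}_2^2}{\inner{Ax,x}_2}-1+\log\norm x_2 ,
\]
where $z>0$ is supplied by Lemma~\ref{Lem: Free endpoint transform}.

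Next express the exponent through the centered element $u=x-\tau(x)\mathbf1$. Since $A\mathbf1=0$, we have $Ax=Au$, $\norm{Ax}_2^2=\norm{Au}_2^2$ and $\inner{Ax,x}_2=\inner{Au,u}_2$. Therefore
\[
\frac{\norm{Ax}_2^2}{\inner{Ax,x}_2}-1=\frac{\norm{Au}_2^2-\inner{Au,u}_2}{\inner{Au,u}_2}=\frac{\inner{Au,(A-I)u}_2}{\inner{Au,u}_2}=\rho,
\]
using \eqref{Eqn: Free higher energy}. The required inequality thus becomes $z<\norm x_2e^{\rho}$, which is exactly \eqref{Ineq: Free strict scalar comparison} in Lemma~\ref{Lem: Free scalar comparison}. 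Since $x$ is self-adjoint and nonconstant, that lemma applies.

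Taking logarithms gives $\log z<\log\norm x_2+\rho$. Substituting into \eqref{Ineq: Free transformed comparison} and multiplying by $\inner{Ax,x}_2>0$ yields $\mathcal Q_{A;A}(x)>0$.

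The real obstacle is not in this theorem but in Lemma~\ref{Lem: Free cubic bound}, on which Lemma~\ref{Lem: Free scalar comparison} depends. Here it is simply assumed. The only point to check in the present argument is that the identity $\norm{Ax}_2^2/\inner{Ax,x}_2-1=\rho$ holds exactly, with no loss, so that the strict inequality carries through.
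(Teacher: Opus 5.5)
Your proposal is correct and follows the paper's own proof. The paper likewise combines Lemma~\ref{Lem: Free endpoint transform} with Lemma~\ref{Lem: Free scalar comparison}, using $Ax=Au$ and $\inner{Ax,x}_2=\inner{Au,u}_2$ to identify $\norm{Ax}_2^2/\inner{Ax,x}_2-1$ with $\rho$, so that the strict bound $z<\norm x_2e^{\rho}$ yields $\mathcal Q_{A;A}(x)>0$. The constant case is immediate from $Ax=0$, exactly as you note.
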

\begin{proof}
Write $u=x-\tau(x)\mathbf1$, $\rho=\inner{Au,(A-I)u}_2/\inner{Au,u}_2$, and $z=\operatorname{Re}\inner{Ax,x^2}_2/(2\inner{Ax,x}_2)$. Then $Ax=Au$ and $\inner{Ax,x}_2=\inner{Au,u}_2>0$. Combining Lemmas~\ref{Lem: Free endpoint transform} and~\ref{Lem: Free scalar comparison} gives
\[
\mathcal Q_{A;A}(x)\ge\inner{Ax,x}_2\left( \frac{\norm{Ax}_2^2}{\inner{Ax,x}_2}-1-\log z+\log\norm x_2 \right)>\inner{Ax,x}_2\left( \frac{\norm{Ax}_2^2}{\inner{Ax,x}_2}-1-\rho\right)=0.
\]
This proves~\eqref{Ineq: Free homogeneous comparison}.
\end{proof}

\begin{proof}[Proof of Theorem~\ref{Thm: Free group LSI}]
Take $x\in\mathbb C[\mathbb F_n]$ with $x\ge\varepsilon\mathbf1>0$ and set $x_t=e^{-tA}x$. The semigroup preserves the positive lower bound and finite Fourier support, so Proposition~\ref{Prop: General interpolation} applies along this curve of Fourier polynomials. Theorem~\ref{Thm: Free polynomial comparison} gives
\begin{equation}\label{Eqn: Free deficit derivative}
\frac{\dd}{\dd t}\mathcal F_A(x_t) =-4\mathcal Q_{A;A}(x_t) \le0.
\end{equation}
For nonconstant $x$, every $x_t$ is nonconstant, so the inequality is strict. Finite Fourier support gives $x_t\to\tau(x)\mathbf1$ in operator norm and $\mathcal E_A(x_t)\to0$. Thus $\Ent_\tau(x_t^2)\to0$ and $\mathcal F_A(x_t)\to0$. By Proposition~\ref{Prop: General interpolation} we have
\[
\mathcal F_A(x)=4\int_0^\infty\mathcal Q_{A;A}(x_t)\,\mathrm dt\ge0.
\]
For a positive polynomial, apply the result to $x+\varepsilon\mathbf1$ and let $\varepsilon\downarrow0$. Continuity of $s^2\log s^2$ at zero gives~\eqref{Ineq: LSI for free group}.

For optimality, take $h=\lambda_{s_1}+\lambda_{s_1}^*$. Then $h$ is bounded and self-adjoint, with $\tau(h)=0$ and $Ah=h$. For sufficiently small real $s$, $\mathbf1+sh$ is positive and
\[
\Ent_\tau((\mathbf1+sh)^2)=2s^2\norm h_2^2+O(s^3), \qquad \mathcal E_A(\mathbf1+sh)=s^2\norm h_2^2.
\]
The ratio tends to $2$ as $s\to0$, proving sharpness.
\end{proof}

\begin{remark}[Choice of interpolation]\label{Rmk: Choice of interpolation}
The auxiliary semigroup is not unique. We first make explicit the logarithmic comparison underlying the cyclic-group Lagrange-multiplier argument in~\cite{yao2026completesolutionoptimalhypercontractivity}, which uses the generator
\[
L_{\mathrm{nn}}=2I-S_n-S_n^*,\qquad (S_nf)(j)=f(j+1), \qquad A\chi_k=\ell_n(k)\chi_k.
\]
For positive $x$ with $\norm x_2=1$ and $n\ge4$, that argument proves
\[
\inner{L_{\mathrm{nn}}x,x^2-2x\log x}_2\ge0,\qquad 2\inner{L_{\mathrm{nn}}x,Ax}_2-\inner{L_{\mathrm{nn}}x,x^2}_2\ge0.
\]
The first inequality follows from the monotonicity of $s^2-2s\log s$, whose derivative is $2(s-1-\log s)\ge0$. For the second, Yao's functional $K_n$ in~\cite[Eq.~(3.1), Theorem~3.4 and Lemma~3.5]{yao2026completesolutionoptimalhypercontractivity} satisfies, in our normalized inner product,
\[
K_n(x)=2\left(2\inner{L_{\mathrm{nn}}x,Ax}_2-\inner{L_{\mathrm{nn}}x,x^2}_2\right).
\]
The cited estimates give $K_n(x)\ge0$ for $\norm x_2<1$; applying them to $rx$ and letting $r\uparrow1$ gives the boundary case. Adding the two inequalities yields
\[
2\mathcal Q_{A;L_{\mathrm{nn}}}(x)=2\operatorname{Re}\inner{L_{\mathrm{nn}}x,Ax-x\log x}_2\ge0.
\]
For $x_t=e^{-tL_{\mathrm{nn}}}x$, homogeneity and Proposition~\ref{Prop: General interpolation} therefore give $\frac{\dd}{\dd t}\mathcal F_A(x_t)=-4\mathcal Q_{A;L_{\mathrm{nn}}}(x_t)\le0$.

For comparison with scalar third-moment methods, consider the averaging semigroup
\begin{equation}\label{Eqn: Depolarizing interpolation}
L_0=I-E_\tau,\qquad E_\tau x=\tau(x)\mathbf1, \qquad e^{-tL_0}x=E_\tau x+e^{-t}(x-E_\tau x).
\end{equation}
For self-adjoint $u$ with $\tau(u)=0$, this particular generator satisfies
\[
\operatorname{Re}\inner{L_0u,u^2}_2=\tau(u^3).
\]
Thus scalar third-moment bounds can be used for this choice of interpolation. This comparison is separate from the estimates involving the generators proved above and below, and is not used in the proofs of the main theorems. Put $S=\norm u_2$ and $Q=\inner{u,(A-I)u}_2^{1/2}$, where $A$ is the generator of the example, normalized to have spectral gap one. The relevant third-moment estimates are
\begin{align}
|\tau(u^3)|&\le3S^2Q &&\text{on }\mathbb Z_n,\ n\ge4, \label{Ineq: Cubic estimation cyclic group}\\
\tau(u^3)&\le3S^2Q+Q^3 &&\text{in the free-group and related examples}. \label{Ineq: Cubic estimation free group}
\end{align}
For a uniformly positive nonconstant $x$ in the polynomial algebra of the example, with $\norm x_2=1$ and $u=x-\tau(x)\mathbf1$, these moment bounds imply the strict comparison
\begin{equation}\label{Eqn: Averaging normalized comparison}
\mathcal Q_{A;L_0}(x)=\inner{Au,u}_2-\tau(ux\log x)>0.
\end{equation}
By homogeneity and Proposition~\ref{Prop: General interpolation}, the deficit $\mathcal F_A(e^{-tL_0}x)$ therefore decreases strictly away from constants. Besides free groups and finite cyclic groups, the examples covered by these third-moment bounds include free products of copies of $\mathbb Z$ and $\mathbb Z_2$, free Gaussian algebras with the number operator, and free orthogonal quantum groups of Kac type with the normalized heat generator; see \cite{xie2026sharplogsobolevinequalitiesfinite,xiezhang2026hypercontractivityfreegroup,zhang2026sharphypercontractivityfreeorthogonal}.
\end{remark}
\section{Crossed products by integer and finite cyclic groups}\label{Sec: Integer crossed products}

Let $G=\mathbb Z$ or $G=\mathbb Z_n$, with $n=2$ or $n\ge4$, and let $\vartheta$ be a normal trace-preserving automorphism of $(\mathcal N,\tau_{\mathcal N})$, with $\vartheta^n=\id$ in the finite case. On $\mathcal M=\mathcal N\rtimes_\vartheta G$, recall the word-length generator and its semigroup:
\[
\mathsf N(U^ka)=\ell(k)U^ka,\qquad e^{-t\mathsf N}(U^ka)=e^{-t\ell(k)}U^ka,
\]
where $\ell(k)=|k|$ for $G=\mathbb Z$ and $\ell(k)=\min\{k,n-k\}$ for $G=\mathbb Z_n$, $0\le k<n$. We distinguish this generator on $L_2(\mathcal M)$ from the scalar word-length generator on $L_2(\widehat G,m)$:
\[
\mathsf N_G\chi_k=\ell(k)\chi_k,\qquad \Dom(\mathsf N_G)=\left\{f\in L_2(\widehat G,m):\sum_k\ell(k)^2|\widehat f(k)|^2<\infty\right\}.
\]
Here $m$ is normalized Haar measure; $L_2(\widehat G,m)$ is the Fourier realization of the group algebra $L_2(\mathcal L(G))$.

We extend the free-group argument to $\mathcal M$ by adapting the operator-valued transference of Chen, Xu and Yin~\cite{MR3079331} to the measure $\nu$. The resulting vector-valued $L_2$ space carries a Hilbert $\mathcal M$-bimodule structure associated with $\mathcal E_{\mathsf N}$, as in~\cite[Section~4.3]{MR2463708}. On this space, we represent the weighted sum of entropies $\frac12\sum_k\ell(k)\Ent_{\tau_{\mathcal N}}(|a_k|^2)$ and estimate the logarithmic pairing $\operatorname{Re}\inner{\mathsf Nx,x\log x}_2$. A resolvent comparison then reduces the crossed-product estimate to a scalar cubic pairing for $\mathsf N_G$. We conclude with scalar twisted crossed products, quantum tori, and matrix algebras.

\subsection{The sharp crossed-product LSI}\label{Sec: Main proof}
We apply the crossed product version of the interpolation scheme in Section~\ref{Sec: Semigroup interpolation}. For a bounded positive Fourier polynomial $x=\sum_kU^ka_k$, define
\begin{equation}\label{Eqn: Crossed coefficient deficit}
\mathcal F_G(x)=2\mathcal E_{\mathsf N}(x) +\sum_k\Ent_{\tau_{\mathcal N}}(|a_k|^2)-\Ent_\tau(x^2),
\end{equation}
and for $x\ne0$, define
\begin{equation}\label{Eqn: Crossed homogeneous comparison}
\mathcal Q_G(x)=\operatorname{Re}\inner{\mathsf Nx,\mathsf Nx-x\log x}_2+\mathcal E_{\mathsf N}(x)\log\norm x_2+\frac12\sum_k\ell(k)\Ent_{\tau_{\mathcal N}}(|a_k|^2).
\end{equation}

\begin{proposition}\label{Prop: Crossed first derivative}
Let $x=\sum_kU^ka_k\ge\varepsilon\mathbf1>0$ be a Fourier polynomial, and write $x_t=e^{-t\mathsf N}x$. Then
\begin{equation}\label{Eqn: Crossed word deficit derivative}
\frac{\dd}{\dd t}\mathcal F_G(x_t)=-4\mathcal Q_G(x_t).
\end{equation}
Moreover, $\mathcal F_G(x_t)\to\mathcal F_G(P_0x)=0$ as $t\to\infty$, and hence
\begin{equation}\label{Eqn: Crossed integrated deficit}
\mathcal F_G(x)=4\int_0^\infty\mathcal Q_G(x_t)\,\dd t.
\end{equation}
In particular, nonnegativity of $\mathcal Q_G$ along the curve implies $\mathcal F_G(x)\ge0$.
\end{proposition}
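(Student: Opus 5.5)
We prove Proposition~\ref{Prop: Crossed first derivative} by splitting $\mathcal F_G$ into the general deficit $\mathcal F_{\mathsf N}$ of~\eqref{Eqn: General deficit} (with $A=\mathsf N$) plus the coefficient entropy sum, and differentiating each piece separately. The derivative of the first piece is supplied by Proposition~\ref{Prop: General interpolation} with $A=L=\mathsf N$. The coefficient entropies are handled by homogeneity, and the limit as $t\to\infty$ uses finite Fourier support.

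\textbf{Setup.} Since $x$ is a Fourier polynomial, $x_t=\sum_k e^{-t\ell(k)}U^ka_k$ is again a Fourier polynomial with the same support. As $e^{-t\mathsf N}$ is unital and completely positive, $\varepsilon\mathbf1\le x_t\le\norm x_\infty\mathbf1$. The algebra of Fourier polynomials with coefficients in $\mathcal N$ is an $L_2$-dense unital $*$-subalgebra contained in $\mathcal M\cap\Dom(\mathsf N)$. Proposition~\ref{Prop: General interpolation}, applied with $A=L=\mathsf N$, therefore gives
\[
\frac{\dd}{\dd t}\bigl(2\mathcal E_{\mathsf N}(x_t)-\Ent_\tau(x_t^2)\bigr)=-4\mathcal Q_{\mathsf N;\mathsf N}(x_t).
\]
All quantities here are finite and smooth in $t$, because only finitely many coefficients occur.

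\textbf{Coefficient entropies.} The $k$-th coefficient of $x_t$ is $e^{-t\ell(k)}a_k$. By $2$-homogeneity of entropy,
\[
\Ent_{\tau_{\mathcal N}}\bigl(|e^{-t\ell(k)}a_k|^2\bigr)=e^{-2t\ell(k)}\Ent_{\tau_{\mathcal N}}(|a_k|^2),
\]
where each $\Ent_{\tau_{\mathcal N}}(|a_k|^2)$ is finite since $a_k$ is bounded. Differentiating the finite sum gives
\[
-2\sum_k\ell(k)e^{-2t\ell(k)}\Ent_{\tau_{\mathcal N}}(|a_k|^2)=-4\cdot\tfrac12\sum_k\ell(k)\Ent_{\tau_{\mathcal N}}\bigl(|e^{-t\ell(k)}a_k|^2\bigr).
\]
Adding this to the previous display yields exactly $-4\mathcal Q_G(x_t)$ by the definition~\eqref{Eqn: Crossed homogeneous comparison}, which proves~\eqref{Eqn: Crossed word deficit derivative}.

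\textbf{Limit as $t\to\infty$.} Since the support is finite, $x_t\to a_0=P_0x$ in operator norm, and $\mathcal E_{\mathsf N}(x_t)=\sum_k\ell(k)e^{-2t\ell(k)}\norm{a_k}_2^2\to0$. Each coefficient term with $k\neq0$ tends to $0$, because it carries the factor $e^{-2t\ell(k)}$; the $k=0$ term equals $\Ent_{\tau_{\mathcal N}}(|a_0|^2)$ for every $t$. The uniform spectral bounds $[\varepsilon,\norm x_\infty]$ make $s\mapsto s^2\log s^2$ uniformly continuous there, so $\Ent_\tau(x_t^2)\to\Ent_\tau(a_0^2)$.

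Since $a_0\ge\varepsilon\mathbf1$ is positive and lies in $\mathcal N$, we have $|a_0|^2=a_0^2$, and $\tau$ restricts to $\tau_{\mathcal N}$ on $\mathcal N$. Hence $\Ent_\tau(a_0^2)=\Ent_{\tau_{\mathcal N}}(|a_0|^2)$, and the limit is
\[
\mathcal F_G(P_0x)=0+\Ent_{\tau_{\mathcal N}}(|a_0|^2)-\Ent_\tau(a_0^2)=0.
\]

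\textbf{Conclusion.} Integrate~\eqref{Eqn: Crossed word deficit derivative} over $[0,T]$ and let $T\to\infty$. This gives~\eqref{Eqn: Crossed integrated deficit} as an improper integral, and nonnegativity of $\mathcal Q_G$ along the curve then implies $\mathcal F_G(x)\ge0$.

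The only delicate point is justifying the derivative of $\Ent_\tau(x_t^2)$, which is the trace chain rule. It is already covered by Proposition~\ref{Prop: General interpolation}, since $t\mapsto x_t$ is norm-smooth with values in a fixed spectral interval on which $s\mapsto s^2\log s^2$ is smooth.
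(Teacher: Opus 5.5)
Your proposal is correct and follows the paper's proof essentially step for step. You apply Proposition~\ref{Prop: General interpolation} with $A=L=\mathsf N$ for the $\mathcal F_{\mathsf N}$ part, differentiate the coefficient entropies by $2$-homogeneity, and use finite Fourier support for the operator-norm limit $x_t\to a_0$ before integrating on $[0,T]$; your explicit check that $\Ent_\tau(a_0^2)=\Ent_{\tau_{\mathcal N}}(|a_0|^2)$, hence $\mathcal F_G(P_0x)=0$, just spells out what the paper states in one line.
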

\begin{proof}
Apply Proposition~\ref{Prop: General interpolation} with $A=L=\mathsf N$ to differentiate $\mathcal F_{\mathsf N}(x_t)$. The coefficients of $x_t$ are $e^{-t\ell(k)}a_k$, and homogeneity gives
\[
\frac{\dd}{\dd t}\sum_k\Ent_{\tau_{\mathcal N}}(|e^{-t\ell(k)}a_k|^2)=-2\sum_k\ell(k)\Ent_{\tau_{\mathcal N}}(e^{-2t\ell(k)}|a_k|^2).
\]
Adding this derivative proves~\eqref{Eqn: Crossed word deficit derivative}. As $t\to\infty$, finite Fourier support gives $x_t\to a_0=P_0x$ in operator norm and $\mathcal E_{\mathsf N}(x_t)\to0$. The entropy $\Ent_\tau(x_t^2)$ converges to $\Ent_{\tau_{\mathcal N}}(a_0^2)$, while the entropy sum $\sum_k\Ent_{\tau_{\mathcal N}}(e^{-2t\ell(k)}|a_k|^2)$ also converges to $\Ent_{\tau_{\mathcal N}}(a_0^2)$. Thus $\mathcal F_G(x_t)\to0$. Integrating~\eqref{Eqn: Crossed word deficit derivative} on $[0,T]$ and letting $T\to\infty$ proves~\eqref{Eqn: Crossed integrated deficit}.
\end{proof}

Both $\mathcal F_G$ and $\mathcal Q_G$ are homogeneous of degree two. In the operator-valued argument below, fix a bounded Fourier polynomial $x=\sum_kU^ka_k\ge\varepsilon\mathbf1>0$ with $x\ne a_0=P_0x$. We first represent the weighted sum of entropies in $\mathcal Q_G$ on the weighted Hilbert bimodule $L_2(\widehat G,\nu;L_2(\mathcal M))$.

\subsubsection{Representation of the entropy terms}
Let
\[
\mathcal H=L_2(\widehat G,\nu;L_2(\mathcal M)),\qquad \inner{v,w}_{\nu,2}=\int_{\widehat G}\tau\bigl(v(\theta)^*w(\theta)\bigr)\,\dd\nu(\theta),
\]
where $\nu$ is the measure in~\eqref{Eqn: Crossed jump measure}. The closed derivation
\[
\delta:\Dom(\mathsf N^{1/2})\subset L_2(\mathcal M)\longrightarrow\mathcal H,\qquad (\delta y)(\theta)=\delta_\theta y=y-\alpha_\theta(y),
\]
satisfies $\delta^*\delta=\mathsf N$ by~\eqref{Eqn: Crossed jump form}. Write $v_x=\delta x$, so that $\norm{v_x}_{\nu,2}^2=\mathcal E_{\mathsf N}(x)$. Its Fourier components are
\[
P_kv_x(\theta)=(1-\chi_k(\theta))U^ka_k.
\]
We define a bounded operator on $\mathcal H$ by right multiplication on these Fourier subspaces:
\begin{equation}\label{Eqn: Crossed ambient coefficient operator}
(C_xv)(\theta)=v(\theta)+\sum_{\substack{k\ne0\\a_k\ne0}}P_k[v(\theta)]\left(\frac{|a_k|}{\norm{a_k}_2}-\mathbf1\right).
\end{equation}

\begin{lemma}\label{Lem: Crossed coefficient representation}
The operator $C_x$ is bounded, positive and self-adjoint on $\mathcal H$, and $v_x$ lies in its support. With the logarithm taken on the support, the following identity holds in the quadratic-form sense:
\begin{equation}\label{Eqn: compute of P_kdeltax for entropy sum}
\inner{v_x,\log C_x\,v_x}_{\nu,2}=\frac12\sum_k\ell(k)\Ent_{\tau_{\mathcal N}}(|a_k|^2).
\end{equation}
\end{lemma}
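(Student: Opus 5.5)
The plan is to diagonalize $C_x$ along the Fourier decomposition of $\mathcal H$ and then compute the quadratic form coefficientwise. First, note that pointwise in $\theta$ the space $L_2(\mathcal M)$ splits orthogonally as $\bigoplus_kU^kL_2(\mathcal N)$. Hence $\mathcal H=\bigoplus_k\mathcal H_k$, where $\mathcal H_k=\{v\in\mathcal H:v(\theta)\in U^kL_2(\mathcal N)\ \nu\text{-a.e.}\}$, and the projections $P_k$ act pointwise and are orthogonal on $\mathcal H$. Right multiplication by an element $c\in\mathcal N$ maps $U^kL_2(\mathcal N)$ into itself. So $C_x$ commutes with every $P_k$. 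It acts as the identity on $\mathcal H_0$ and on those $\mathcal H_k$ with $a_k=0$. On the remaining $\mathcal H_k$ it acts as right multiplication $R_{c_k}$ by $c_k=|a_k|/\norm{a_k}_2\in\mathcal N_+$.

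Next I will check boundedness, positivity and self-adjointness. For $w=U^kb$, traciality gives $\inner{w,wc_k}_2=\tau(b^*U^{-k}U^kbc_k)=\tau(c_k^{1/2}b^*bc_k^{1/2})\ge0$, and the same computation gives $\inner{w,w'c_k}_2=\inner{wc_k,w'}_2$. Thus $R_{c_k}$ is a positive self-adjoint contraction up to the factor $\norm{c_k}_\infty$. Since only finitely many $a_k$ are nonzero, $C_x$ is bounded with $\norm{C_x}\le\max\{1,\max_k\norm{a_k}_\infty/\norm{a_k}_2\}$. All of this holds pointwise in $\theta$, so it passes to $\mathcal H$.

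For the support claim, I use $P_kv_x(\theta)=(1-\chi_k(\theta))U^ka_k$. With the polar decomposition $a_k=w_k|a_k|$, the element $U^ka_k=(U^kw_k)|a_k|$ lies in the closure of the range of $R_{|a_k|}$, namely the $L_2$-closure of $\mathcal N|a_k|$. More precisely, $U^ka_k=\lim_{\eta\downarrow0}U^ka_k\,|a_k|(|a_k|+\eta)^{-1}$. Since the scalar factor $(1-\chi_k(\theta))$ commutes with $R_{c_k}$, $P_kv_x$ lies in the support of $C_x|_{\mathcal H_k}$. The component $k=0$ vanishes because $1-\chi_0=0$.

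The main step is the quadratic-form identity. On the support in $\mathcal H_k$, $\log C_x$ is right multiplication by $\log c_k$, understood through the spectral measure of $c_k$ restricted to $(0,\infty)$. The key delicate point is that $\log c_k$ may be unbounded below. I will handle it with the truncations $F_m$ of Section~2.3: the relevant spectral integral is $\tau(|a_k|^2\log c_k)=\int s^2\log(s/\norm{a_k}_2)\,\mathrm d\mu_{|a_k|}(s)$, which converges absolutely because $s^2\log s$ is bounded on $[0,\norm{a_k}_\infty]$. Dominated convergence then justifies the form identity. By orthogonality of the $\mathcal H_k$,
\[
\inner{v_x,\log C_x\,v_x}_{\nu,2}=\sum_{k\ne0}\int_{\widehat G}|1-\chi_k(\theta)|^2\,\dd\nu(\theta)\;\tau\bigl(a_k^*a_k\log c_k\bigr),
\]
using $\tau((U^ka_k)^*U^ka_k\log c_k)=\tau(|a_k|^2\log c_k)$. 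By \eqref{Eqn: Crossed jump identities} the $\nu$-integral equals $\ell(k)$. Moreover,
\[
\tau(|a_k|^2\log c_k)=\tfrac12\bigl(\tau(|a_k|^2\log|a_k|^2)-\norm{a_k}_2^2\log\norm{a_k}_2^2\bigr)=\tfrac12\Ent_{\tau_{\mathcal N}}(|a_k|^2).
\]
Terms with $k=0$ or $a_k=0$ contribute zero on both sides, which yields \eqref{Eqn: compute of P_kdeltax for entropy sum}.
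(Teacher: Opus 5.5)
Your proposal is correct and follows the paper's proof essentially step for step. Both arguments block-diagonalize $C_x$ along the Fourier projections as right multiplication by $|a_k|/\norm{a_k}_2$ and obtain positivity from traciality. Both get the support claim from $a_k=a_k\mathbf1_{(0,\infty)}(|a_k|)$, which your limit $a_k|a_k|(|a_k|+\eta)^{-1}\to a_k$ restates. Both get absolute finiteness of the logarithmic form from the boundedness of $s^2\log(s/\norm{a_k}_2)$ on the spectrum. Both finish with the coefficientwise computation via $\int|1-\chi_k|^2\,\dd\nu=\ell(k)$. The only difference is that you spell out the truncation and dominated-convergence details, which the paper leaves implicit.
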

\begin{proof}
Only finitely many Fourier blocks differ from the identity. Each block is right multiplication by a bounded positive element; under the identification $U^kL_2(\mathcal N)\cong L_2(\mathcal N)$, its positivity follows from $\inner{c,ch}_2=\norm{ch^{1/2}}_2^2\ge0$ for $h\ge0$.\par 

For $a_k\ne0$, the identity $a_k=a_k\mathbf1_{(0,\infty)}(|a_k|)$ places the corresponding component of $v_x$ in the support of $C_x$. The function $s^2|\log(s/\norm{a_k}_2)|$, extended by zero at zero, is bounded on the spectrum of $|a_k|$. Finite Fourier support and~\eqref{Eqn: Crossed jump identities} therefore imply that the logarithmic quadratic form at $v_x$ is absolutely finite. Finally, Fourier orthogonality and the same jump identity yield
\[
\begin{aligned}
\inner{v_x,\log C_x\,v_x}_{\nu,2}
&=\sum_{\substack{k\ne0\\a_k\ne0}}\int_{\widehat G}|1-\chi_k(\theta)|^2\,\dd\nu(\theta)\,\tau_{\mathcal N}\left(|a_k|^2\log\frac{|a_k|}{\norm{a_k}_2}\right)\\
&=\frac12\sum_k\ell(k)\Ent_{\tau_{\mathcal N}}(|a_k|^2).
\end{aligned}
\]
This proves the lemma.
\end{proof}

\subsubsection{The logarithmic pairing}
To estimate $\operatorname{Re}\inner{\mathsf Nx,x\log x}_2$, equip $\mathcal H$ with the bounded left and right actions
\[
(a\cdot v\cdot b)(\theta)=a\,v(\theta)\alpha_\theta(b),\qquad a,b\in\mathcal M,\quad v\in\mathcal H.
\]
For $a,b\in\mathcal M\cap\Dom(\mathsf N^{1/2})$, the identity $\delta(ab)=a\cdot\delta b+(\delta a)\cdot b$ shows that $ab$ belongs to the same bounded form domain. On each $L_2(\mathcal M)$ fiber, let $L_x$ and $R_{\alpha_\theta(x)}$ denote left and right multiplication. Define their average on $\mathcal H$ by
\begin{equation}\label{Eqn: Crossed midpoint operators}
(T_xv)(\theta)=\frac{L_x+R_{\alpha_\theta(x)}}2v(\theta)=\frac{xv(\theta)+v(\theta)\alpha_\theta(x)}2.
\end{equation}

\begin{lemma}\label{Lem: Crossed spectral endpoint}
The operator $T_x$ is bounded, positive and self-adjoint on $\mathcal H$, with $\varepsilon I\le T_x\le\norm{x}_\infty I$. Moreover,
\begin{equation}\label{Ineq: Crossed word midpoint}
\operatorname{Re}\inner{\mathsf Nx,x\log x}_2\le\mathcal E_{\mathsf N}(x)+\inner{v_x,\log T_x\,v_x}_{\nu,2}.
\end{equation}
\end{lemma}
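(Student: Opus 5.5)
The plan is to mimic the proof of Lemma~\ref{Lem: Endpoint representation} and the concavity step leading to~\eqref{Ineq: General spectral logarithmic bound}, but without passing to a scalar measure. Instead we keep the operator $T_x$ and use operator concavity of the logarithm.

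\emph{Properties of $T_x$.} On each fiber $L_2(\mathcal M)$, the left multiplication $L_x$ and the right multiplication $R_{\alpha_\theta(x)}$ are commuting bounded self-adjoint operators. Since $\alpha_\theta$ is a $*$-automorphism, $\alpha_\theta(x)\ge\varepsilon\mathbf1$, and $\norm{\alpha_\theta(x)}_\infty=\norm x_\infty$. Hence both operators have spectrum in $[\varepsilon,\norm x_\infty]$, and so does their average. Measurability in $\theta$ is clear for a Fourier polynomial $x$, since $\alpha_\theta(x)$ is a trigonometric polynomial in $\theta$. Therefore $T_x$ is a decomposable bounded self-adjoint operator with $\varepsilon I\le T_x\le\norm x_\infty I$.

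\emph{Chain rule.} Let $\mathcal L$ and $\mathcal R$ denote the decomposable operators $L_x$ and $R_{\alpha_\theta(x)}$ on $\mathcal H$. They commute, and the Leibniz rule $\delta(ab)=a\cdot\delta b+(\delta a)\cdot b$ gives $\delta(x^m)=\sum_{j}\mathcal L^j\mathcal R^{m-1-j}\delta x$. As in~\eqref{Eqn: Endpoint chain rule}, polynomial approximation and closedness of $\delta$ then yield
\[
\delta f(x)=f^{[1]}(\mathcal L,\mathcal R)v_x
\]
for $C^1$ functions $f$. Taking $f(s)=s\log s$ and writing $f^{[1]}(s,t)=1+\int_0^1\log((1-w)s+wt)\,\dd w$, we obtain
\[
\operatorname{Re}\inner{\mathsf Nx,x\log x}_2=\mathcal E_{\mathsf N}(x)+\int_0^1\inner{v_x,\log\bigl((1-w)\mathcal L+w\mathcal R\bigr)v_x}_{\nu,2}\,\dd w.
\]

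\emph{Concavity step.} Set $S_w=(1-w)\mathcal L+w\mathcal R$. Then $S_w+S_{1-w}=2T_x$. The operators $\mathcal L$ and $\mathcal R$ strongly commute, so we may pass to their joint spectral measure at $v_x$, say $\mu$ on $[\varepsilon,\norm x_\infty]^2$. In this picture the claim reduces to the scalar inequality
\[
\int_0^1\log((1-w)s+wt)\,\dd w\le\log\frac{s+t}2,
\]
which is Jensen's inequality for $\log$ applied with the uniform measure in $w$. Integrating this bound against $\mu$ gives $\inner{v_x,\log T_x\,v_x}_{\nu,2}$, and~\eqref{Ineq: Crossed word midpoint} follows.

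\emph{Main obstacle.} The delicate point is justifying the chain rule on the weighted bimodule. One must check that the decomposable operators $\mathcal L,\mathcal R$ are the correct left and right actions, so that the $\theta$-dependence of $\alpha_\theta(x)$ is handled consistently. One must also check that $f(x)\in\Dom(\mathsf N^{1/2})$ and that the pairing $\inner{\mathsf Nx,f(x)}_2=\inner{\delta x,\delta f(x)}_{\nu,2}$ holds. Closedness of $\delta$ together with uniform polynomial approximation of $f$ on $[\varepsilon,\norm x_\infty]$ should suffice, since all operators involved are uniformly bounded there.
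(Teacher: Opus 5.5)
Your proposal is correct and follows essentially the same route as the paper: the divided-difference chain rule for $s\log s$ with respect to the left action $L_x$ and the twisted right action $R_{\alpha_\theta(x)}$, followed by Jensen's inequality in the segment variable inside the joint spectral calculus of these two commuting operators. The aside $S_w+S_{1-w}=2T_x$ is not needed, but it does no harm.
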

\begin{proof}
The spectral bounds follow from $\varepsilon\mathbf1\le x\le\norm{x}_\infty\mathbf1$. The divided-difference formula~\eqref{Eqn: Endpoint chain rule}, applied to $s\mapsto s\log s$ and the two module actions, gives
\begin{equation}\label{Eqn: Crossed logarithmic jump}
\delta_\theta(x\log x)=\int_0^1\left[I+\log\bigl((1-s)L_x+sR_{\alpha_\theta(x)}\bigr)\right]\delta_\theta x\,\dd s.
\end{equation}
Concavity of the logarithm in the joint spectral calculus yields
\[
\int_0^1\log\bigl((1-s)L_x+sR_{\alpha_\theta(x)}\bigr)\,\dd s\le\log\left(\frac{L_x+R_{\alpha_\theta(x)}}2\right).
\]
Pairing~\eqref{Eqn: Crossed logarithmic jump} with $v_x$ and using $\delta^*\delta=\mathsf N$, we obtain
\[
\begin{aligned}
\operatorname{Re}\inner{\mathsf Nx,x\log x}_2
&=\operatorname{Re}\inner{v_x,\delta(x\log x)}_{\nu,2}\\
&\le\mathcal E_{\mathsf N}(x)+\int_{\widehat G}\inner{\delta_\theta x,\log\left(\frac{L_x+R_{\alpha_\theta(x)}}2\right)\delta_\theta x}_2\,\dd\nu(\theta)\\
&=\mathcal E_{\mathsf N}(x)+\inner{v_x,\log T_x\,v_x}_{\nu,2}.
\end{aligned}
\]
The spectral bounds on $x$ make all logarithms uniformly bounded. The integrands are consequently dominated by a constant times $\norm{\delta_\theta x}_2^2$, which is $\nu$-integrable.
\end{proof}

Combining Lemmas~\ref{Lem: Crossed coefficient representation} and~\ref{Lem: Crossed spectral endpoint} gives the following bound.
\begin{lemma}
With $v_x$, $C_x$ and $T_x$ as above,
\begin{equation}\label{Ineq: Crossed logarithmic reduction}
\mathcal Q_G(x)\ge\norm{\mathsf Nx}_2^2-\mathcal E_{\mathsf N}(x)+\mathcal E_{\mathsf N}(x)\log\norm x_2+\inner{v_x,(\log C_x-\log T_x)v_x}_{\nu,2}.
\end{equation}
\end{lemma}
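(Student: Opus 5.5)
This bound is a direct substitution of the two preceding lemmas into the definition~\eqref{Eqn: Crossed homogeneous comparison} of $\mathcal Q_G(x)$. The plan is to handle the three terms of $\mathcal Q_G(x)$ separately and then add.

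First, the quadratic term. Since $x$ is a finite Fourier sum, $\mathsf Nx$ is defined and $\operatorname{Re}\inner{\mathsf Nx,\mathsf Nx}_2=\norm{\mathsf Nx}_2^2$. The term $\mathcal E_{\mathsf N}(x)\log\norm x_2$ is carried over unchanged. For the logarithmic pairing, Lemma~\ref{Lem: Crossed spectral endpoint} gives
\[
-\operatorname{Re}\inner{\mathsf Nx,x\log x}_2\ge-\mathcal E_{\mathsf N}(x)-\inner{v_x,\log T_x\,v_x}_{\nu,2}.
\]
For the weighted entropy sum, Lemma~\ref{Lem: Crossed coefficient representation} replaces $\frac12\sum_k\ell(k)\Ent_{\tau_{\mathcal N}}(|a_k|^2)$ by $\inner{v_x,\log C_x\,v_x}_{\nu,2}$. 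Adding the three pieces yields the displayed right-hand side, provided the two quadratic forms may be combined into a single form $\inner{v_x,(\log C_x-\log T_x)v_x}_{\nu,2}$.

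The only point needing care is this last combination, since $\log C_x$ is defined only on the support of $C_x$ and need not be bounded there. Two facts make it harmless:
\begin{itemize}
\item The bounds $\varepsilon I\le T_x\le\norm x_\infty I$ make $\log T_x$ a bounded self-adjoint operator, so $\inner{v_x,\log T_x\,v_x}_{\nu,2}$ is finite.
\item Lemma~\ref{Lem: Crossed coefficient representation} shows that the form of $\log C_x$ at $v_x$ is absolutely finite in the truncation sense of the joint spectral calculus.
\end{itemize}
The difference form is therefore the difference of two finite numbers. I would interpret $\inner{v_x,(\log C_x-\log T_x)v_x}_{\nu,2}$ exactly as that difference, which is the meaning needed in the subsequent resolvent comparison.

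No genuine obstacle is expected here. The substantive work sits in the two cited lemmas and in the later comparison of $\log C_x$ with $\log T_x$.
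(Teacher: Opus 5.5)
Your proposal is correct and follows the paper's own argument. The paper obtains this bound by substituting Lemma~\ref{Lem: Crossed coefficient representation} for the weighted entropy term and Lemma~\ref{Lem: Crossed spectral endpoint} for the logarithmic pairing into the definition of $\mathcal Q_G(x)$. Your extra remark that $\langle v_x,(\log C_x-\log T_x)v_x\rangle_{\nu,2}$ should be read as the difference of two finite quadratic-form values is a useful clarification. It rests on $\log T_x$ being bounded and the form of $\log C_x$ on its support being absolutely finite at $v_x$. The paper leaves this implicit, and your reading is consistent with its later resolvent representation.
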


\subsubsection{Comparing the two logarithmic forms}
In this subsection we still assume that $x=\sum_kU^ka_k\ge\varepsilon\mathbf1>0$ is a Fourier polynomial. All operators and quadratic forms below act on $\mathcal H$. The coefficient formula gives, for $r>0$,
\[
\norm{(C_x+rI)^{-1}v_x}_{\nu,2}\le\norm{v_x}_{\nu,2}\min\{1,r^{-1}\}.
\]
Indeed, each coefficient satisfies $\norm{a_k(\frac{|a_k|}{\norm{a_k}_2}+r\mathbf1)^{-1}}_2\le\norm{a_k}_2\min\{1,r^{-1}\}$. Together with $T_x\ge\varepsilon I$, this bound justifies the absolutely convergent quadratic-form version of the logarithmic resolvent formula:
\begin{equation}\label{Eqn: Crossed log resolvent representation}
\inner{v_x,(\log C_x-\log T_x)v_x}_{\nu,2} =\int_0^\infty\inner{v_x,\bigl[(T_x+rI)^{-1}-(C_x+rI)^{-1}\bigr]v_x}_{\nu,2}\,\dd r.
\end{equation}
Set $S_r=C_x+rI$ and $R_r=T_x+rI$. For $0<t\le1$, operator convexity of $s\mapsto s^{-1}$ gives
\[
(tR_r+(1-t)S_r)^{-1}\le tR_r^{-1}+(1-t)S_r^{-1}, \qquad \frac{(tR_r+(1-t)S_r)^{-1}-S_r^{-1}}{t}\le R_r^{-1}-S_r^{-1}.
\]
The resolvent identity rewrites the quotient as
\[
\begin{aligned}
\frac{(tR_r+(1-t)S_r)^{-1}-S_r^{-1}}{t}
&=\frac{(tR_r+(1-t)S_r)^{-1}(S_r-(tR_r+(1-t)S_r))S_r^{-1}}{t}\\
&=-(tR_r+(1-t)S_r)^{-1}(R_r-S_r)S_r^{-1}.
\end{aligned}
\]
Letting $t\downarrow0$ in operator norm yields $R_r^{-1}-S_r^{-1}\ge S_r^{-1}(S_r-R_r)S_r^{-1}$. Since $S_r-R_r=C_x-T_x$, integrating the quadratic form at $v_x$ gives
\begin{equation}\label{Ineq: Decomposition of difference of log}
\begin{aligned}
\inner{v_x,(\log C_x-\log T_x)v_x}_{\nu,2} &\ge\int_0^\infty\inner{S_r^{-1}v_x,(C_x-T_x)S_r^{-1}v_x}_{\nu,2}\,\dd r\\
&=\int_0^\infty\inner{S_r^{-1}v_x,C_xS_r^{-1}v_x}_{\nu,2}\,\dd r -\int_0^\infty\inner{S_r^{-1}v_x,T_xS_r^{-1}v_x}_{\nu,2}\,\dd r.
\end{aligned}
\end{equation}
To obtain a bound involving the logarithm of the cubic integral, we also apply this comparison to $T_x/z$, where $z>0$ will be chosen below:
\begin{equation}\label{Ineq: Crossed scaled logarithmic decomposition}
\begin{aligned}
&\inner{v_x,(\log C_x-\log(T_x/z))v_x}_{\nu,2}\\
&\quad\ge\int_0^\infty\inner{S_r^{-1}v_x,C_xS_r^{-1}v_x}_{\nu,2}\,\dd r -\frac1z\int_0^\infty\inner{S_r^{-1}v_x,T_xS_r^{-1}v_x}_{\nu,2}\,\dd r.
\end{aligned}
\end{equation}
We estimate these two integrals separately. For the first integral, $\int_0^\infty c(c+r)^{-2}\,\dd r=1$ for $c>0$ gives
\[
\int_0^\infty C_x(C_x+rI)^{-2}\,\dd r=\mathbf1_{(0,\infty)}(C_x)
\]
in the strong operator topology. This is the support projection of $C_x$, and it fixes $v_x$. Therefore
\begin{equation}\label{Eqn: First part of difference of Log}
\int_0^\infty\inner{S_r^{-1}v_x,C_xS_r^{-1}v_x}_{\nu,2}\,\dd r =\norm{v_x}_{\nu,2}^2=\mathcal E_{\mathsf N}(x).
\end{equation}
For the second integral, we identify the resolvent vectors coefficientwise and use the following weighted pairing estimate.

\begin{lemma}\label{Lem: Crossed resolvent cubic representation}
Retain the fixed $x$, the vector $v_x$ and the operators $C_x,T_x$ defined above. Set
\[
u=\sum_{k\ne0}\norm{a_k}_2\chi_k.
\]
Then $u$ is a real scalar polynomial on $\widehat G$ with $\int_{\widehat G}u\,\dd m=0$. For $r>0$, define
\begin{equation}\label{Eqn: Crossed filtered element}
w_r=\sum_{\substack{k\ne0\\a_k\ne0}}U^ka_k\left(\frac{|a_k|}{\norm{a_k}_2}+r\mathbf1\right)^{-1}.
\end{equation}
Then $w_r=w_r^*$, $P_0w_r=0$ and $w_r\in\mathcal M\cap\Dom(\mathsf N)$. Moreover, with $S_r=C_x+rI$, the following integrals are finite, strictly positive and satisfy
\begin{align}
\int_0^\infty\inner{S_r^{-1}v_x,T_xS_r^{-1}v_x}_{\nu,2}\,\dd r&=\int_0^\infty\int_{\widehat G}\tau\bigl(x(\delta_\theta w_r)^2\bigr)\,\dd\nu(\theta)\,\dd r\label{Eqn: Crossed filtered cubic z}\\
&\le\norm{a_0}_2\mathcal E_{\mathsf N}(x)+\frac12\inner{\mathsf N_Gu,u^2}_2.\label{Ineq: Crossed coefficient cubic bound}
\end{align}
\end{lemma}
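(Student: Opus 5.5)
The plan is to identify the resolvent vector $S_r^{-1}v_x$ explicitly, turn the $T_x$-form into a trace, and then estimate it Fourier-coefficientwise, with a Cauchy--Schwarz argument in the joint variable $(r,\tau)$ as the key step.

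\emph{Step 1: algebraic properties of $u$ and $w_r$.} Since $x=x^*$, the Fourier coefficients satisfy $a_{-k}=\vartheta^{k}(a_k^*)$, from $x^*=\sum_k a_k^*U^{-k}=\sum_kU^{-k}\vartheta^k(a_k^*)$. Hence $|a_{-k}|=\vartheta^k(|a_k^*|)$ and $\norm{a_{-k}}_2=\norm{a_k}_2$, so $u$ is real with zero mean. Write $n_k=\norm{a_k}_2$, $g_r(s)=(s/n_k+r)^{-1}$ and $b_k(r)=a_kg_r(|a_k|)$. The relation $a_kg_r(|a_k|)=g_r(|a_k^*|)a_k$ then gives
\[
(U^kb_k(r))^*=U^{-k}b_{-k}(r),
\]
so $w_r=w_r^*$. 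The identity $P_0w_r=0$ holds by construction, and finite Fourier support gives $w_r\in\mathcal M\cap\Dom(\mathsf N)$.

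\emph{Step 2: identification of the resolvent vector.} Since $P_kv_x(\theta)=(1-\chi_k(\theta))U^ka_k$ and $C_x$ acts on the $k$-th block by right multiplication by $|a_k|/n_k$, we get
\[
P_kS_r^{-1}v_x(\theta)=(1-\chi_k(\theta))U^kb_k(r)=P_k\,\delta_\theta w_r .
\]
The $k=0$ block vanishes because $P_0v_x=0$. Thus $S_r^{-1}v_x=\delta w_r$. Put $y=\delta_\theta w_r$; it is self-adjoint, and
\[
\inner{y,T_xy}_2=\tfrac12\tau(xy^2)+\tfrac12\tau(y^2\alpha_\theta(x)).
\]
Applying $\alpha_{-\theta}$ and using $\alpha_{-\theta}(\delta_\theta w_r)=-\delta_{-\theta}w_r$, the second term at $\theta$ equals the first term at $-\theta$. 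Symmetry of $\nu$ then gives \eqref{Eqn: Crossed filtered cubic z}. Strict positivity follows from $T_x\ge\varepsilon I$ and $v_x\ne0$. Finiteness will follow from the bound below.

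\emph{Step 3: coefficientwise expansion.} Expand
\[
\tau\bigl(x(\delta_\theta w_r)^2\bigr)=\sum_{j+k+l=0}(1-\chi_k(\theta))(1-\chi_l(\theta))\,\tau\bigl(U^ja_jU^kb_k(r)U^lb_l(r)\bigr).
\]
Integrating in $\nu$ by \eqref{Eqn: Crossed jump identities} produces the nonnegative weights $c_{k,l}=\tfrac12(\ell(k)+\ell(l)-\ell(k+l))$. The main obstacle is the estimate
\[
\int_0^\infty\bigl|\tau\bigl(U^ja_jU^kb_k(r)U^lb_l(r)\bigr)\bigr|\,\dd r\le\norm{a_j}_2\,n_k\,n_l .
\]
I would prove it as follows. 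Moving the unitaries through, the trace has the form $\tau(cX_rY_r)$ with $c=\vartheta^{\cdot}(a_j)$, $X_r=\vartheta^{\cdot}(b_k(r))$ and $Y_r=b_l(r)$. Split $c=c'c''$ with $c'=v|c|^{1/2}$ and $c''=|c|^{1/2}$, and apply Cauchy--Schwarz in $L_2(\dd r\otimes\tau)$:
\[
\int_0^\infty|\tau(cX_rY_r)|\,\dd r\le\Bigl(\int_0^\infty\norm{c'X_r}_2^2\,\dd r\Bigr)^{1/2}\Bigl(\int_0^\infty\norm{Y_rc''}_2^2\,\dd r\Bigr)^{1/2}.
\]
The scalar identity $\int_0^\infty s^2(s/n+r)^{-2}\,\dd r=ns$ gives $\int_0^\infty b_k(r)b_k(r)^*\,\dd r=n_k|a_k^*|$, and similarly for $b_l$. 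Hence the first factor is $\bigl(n_k\tau(|c|\,\vartheta^{\cdot}(|a_k^*|))\bigr)^{1/2}\le(n_k^2\norm{a_j}_2)^{1/2}$ by H\"older and trace invariance, and the second is at most $(n_l^2\norm{a_j}_2)^{1/2}$. Their product is the required bound.

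\emph{Step 4: summation.} The terms with $j=0$ force $l=-k$ and weight $c_{k,-k}=\ell(k)$. They contribute at most $\norm{a_0}_2\sum_k\ell(k)n_k^2=\norm{a_0}_2\mathcal E_{\mathsf N}(x)$. The terms with $j,k,l\ne0$ contribute at most
\[
\sum_{j+k+l=0}c_{k,l}\,n_jn_kn_l .
\]
Since the coefficients of $u$ are the nonnegative numbers $n_k$, symmetrizing over the cyclic roles of $j,k,l$ should identify this sum with $\tfrac12\inner{\mathsf N_Gu,u^2}_2$. This yields \eqref{Ineq: Crossed coefficient cubic bound}. I expect this bookkeeping check, together with the Cauchy--Schwarz step in Step 3, to be the delicate points.
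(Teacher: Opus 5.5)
Your proposal is correct in outline and follows the paper's proof closely. The identification $S_r^{-1}v_x=\delta w_r$, the conversion of the $T_x$-form into $\tau(x(\delta_\theta w_r)^2)$ via $\alpha_{-\theta}$ and the symmetry of $\nu$, the Fourier expansion with the jump identities, and the split into the $j=0$ terms and the terms with all indices nonzero are exactly the paper's steps. Your Step~4 bookkeeping is also correct, so the ``should'' can be dropped. Permutation invariance of $\sum_{j+k+l=0}\ell(k)n_jn_kn_l$ turns the weights $\frac12(\ell(k)+\ell(l)-\ell(j))$ into $\frac12\ell(j)$. Then $n_{-k}=n_k$ and $\ell(-k)=\ell(k)$ identify the result with $\frac12\inner{\mathsf N_Gu,u^2}_2$.

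The one genuine difference is the trilinear bound. The paper first uses $\abs{\int_0^\infty\tau(U^ka_kP_jw_rP_mw_r)\,\dd r}\le\norm{a_k}_2\norm{\int_0^\infty P_jw_rP_mw_r\,\dd r}_2$ and then invokes the row--column form of noncommutative H\"older. You instead split the coefficient into two square-root factors and use Cauchy--Schwarz in $L_2(\dd r\otimes\tau)$, followed by the elementary bound $\tau(|c|h)\le\norm c_2\norm h_2$. Your route avoids the row--column inequality. Because it controls $\int_0^\infty|\tau(\cdots)|\,\dd r$, it also gives the finiteness and the exchange of integrals directly.

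As written, however, your Cauchy--Schwarz step pairs the wrong factors. With $c=c'c''$, $c'=v|c|^{1/2}$ and $c''=|c|^{1/2}$, cyclicity gives $\tau(cX_rY_r)=\tau\bigl((c''X_r)(Y_rc')\bigr)$. No identity pairs $c'X_r$ with $Y_rc''$. Your displayed inequality amounts to $|\tau(cXY)|\le\tau(|c|XX^*)^{1/2}\tau(|c|Y^*Y)^{1/2}$, which is false in general. For a counterexample, work in $(M_2(\mathbb C),\operatorname{tr}_2)$ with matrix units, and take $c=e_{12}$, $X=e_{22}$, $Y=e_{21}$. Then $\tau(cXY)=\tau(e_{11})=\frac12$, while $\tau(|c|Y^*Y)=\tau(e_{22}e_{11})=0$.

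The fix is immediate. The first factor, $\int_0^\infty\norm{c''X_r}_2^2\,\dd r=n_k\tau\bigl(|c|\,\vartheta^{-l}(|a_k^*|)\bigr)$, is what you already computed. The second factor becomes $\int_0^\infty\norm{Y_rc'}_2^2\,\dd r=\tau\bigl(|c^*|\int_0^\infty Y_r^*Y_r\,\dd r\bigr)=n_l\tau\bigl(|c^*|\,|a_l|\bigr)$. This is at most $n_l^2\norm{a_j}_2$, since $\norm{|c^*|}_2=\norm c_2=\norm{a_j}_2$. So the bound $\norm{a_j}_2n_kn_l$ and the rest of your argument survive unchanged.
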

\begin{proof}
Since $x=x^*$, one has $P_{-k}x=(P_kx)^*$ and $\norm{a_{-k}}_2=\norm{a_k}_2$. Polar decomposition in~\eqref{Eqn: Crossed filtered element} gives $P_{-k}w_r=(P_kw_r)^*$, so $w_r=w_r^*$. Its Fourier support is finite and excludes zero. The coefficient formulas for $C_x$ and $v_x$ give directly
\[
(S_r^{-1}v_x)(\theta)=\sum_{\substack{k\ne0\\a_k\ne0}}(1-\chi_k(\theta))U^ka_k\left(\frac{|a_k|}{\norm{a_k}_2}+r\mathbf1\right)^{-1}=\delta_\theta w_r.
\]
The resolvent bound above and $\varepsilon I\le T_x\le\norm{x}_\infty I$ show that the first integral is finite and strictly positive. The definition of $T_x$ and traciality yield
\[
\int_0^\infty\inner{S_r^{-1}v_x,T_xS_r^{-1}v_x}_{\nu,2}\,\dd r=\frac12\int_0^\infty\int_{\widehat G}\tau\bigl((x+\alpha_\theta(x))(\delta_\theta w_r)^2\bigr)\,\dd\nu(\theta)\,\dd r.
\]
Trace invariance gives $\tau(\alpha_\theta(x)(\delta_\theta w_r)^2)=\tau(x(\delta_{-\theta}w_r)^2)$. Symmetry of $\nu$ therefore proves~\eqref{Eqn: Crossed filtered cubic z}.

To estimate the right-hand side of \eqref{Eqn: Crossed filtered cubic z}, expand
\[
x(\delta_\theta w_r)^2 =\left(\sum_{k\in G}U^ka_k\right) \left(\sum_{j\ne0}(1-\chi_j(\theta))P_jw_r\right) \left(\sum_{m\ne0}(1-\chi_m(\theta))P_mw_r\right).
\]
The first sum includes $k=0$. A term has zero trace unless $k+j+m=0$. Taking real parts and applying~\eqref{Eqn: Crossed jump identities}, we obtain
\begin{equation}\label{Eqn: cubic with derivations}
    \int_0^\infty\int_{\widehat G}\tau\bigl(x(\delta_\theta w_r)^2\bigr)\,\dd\nu(\theta)\,\dd r=\sum_{\substack{k+j+m=0\\j,m\ne0}} \frac{\ell(j)+\ell(m)-\ell(k)}2\, \operatorname{Re}\int_0^\infty\tau(U^ka_kP_jw_rP_mw_r)\,\dd r.
\end{equation}
All sums are finite, and the indices are taken modulo $n$ when $G=\mathbb Z_n$. Cauchy--Schwarz in $L_2(\mathcal M)$, followed by the row--column form of noncommutative H\"older and~\eqref{Eqn: P_kw_r estimate}, gives
\begin{equation}\label{Ineq: Holder for cubic derivation}
    \begin{split}
        &\left|\int_0^\infty\tau(U^ka_kP_jw_rP_mw_r)\,\dd r\right| \le\norm{a_k}_2\left\|\int_0^\infty P_jw_rP_mw_r\,\dd r\right\|_2\\
&\qquad \le\norm{a_k}_2 \left\|\int_0^\infty P_jw_r(P_jw_r)^*\,\dd r\right\|_2^{1/2} \left\|\int_0^\infty(P_mw_r)^*P_mw_r\,\dd r\right\|_2^{1/2}\\
    \end{split}
\end{equation}
We now evaluate the two integrals on the right. For $k\ne0$ and $a_k\ne0$, the coefficient squares are
\[
\begin{aligned}
(P_kw_r)^*P_kw_r&=|a_k|^2\left(\frac{|a_k|}{\norm{a_k}_2}+r\mathbf1\right)^{-2},\\
P_kw_r(P_kw_r)^* &=(P_{-k}w_r)^*P_{-k}w_r =|a_{-k}|^2\left(\frac{|a_{-k}|}{\norm{a_k}_2}+r\mathbf1\right)^{-2}.
\end{aligned}
\]
The scalar identity $\int_0^\infty s^2(s/c+r)^{-2}\,\dd r=cs$ for $c>0$ and $s\ge0$ therefore gives
\begin{equation}\label{Eqn: P_kw_r estimate}
\int_0^\infty(P_kw_r)^*P_kw_r\,\dd r=\norm{a_k}_2|a_k|, \qquad \int_0^\infty P_kw_r(P_kw_r)^*\,\dd r=\norm{a_k}_2|a_{-k}|.
\end{equation}
Combining \eqref{Eqn: cubic with derivations}, \eqref{Ineq: Holder for cubic derivation}, and \eqref{Eqn: P_kw_r estimate} gives
\[
    \int_0^\infty\int_{\widehat G}\tau\bigl(x(\delta_\theta w_r)^2\bigr)\,\dd\nu(\theta)\,\dd r\leq \sum_{\substack{k+j+m=0\\j,m\ne0}} \frac{\ell(j)+\ell(m)-\ell(k)}{2}\norm{a_k}_2\norm{a_j}_2\norm{a_m}_2.
\]
For $k=0$, one has $m=-j$, so the corresponding part of the upper bound is
\[
\sum_{j\ne0}\ell(j)\norm{a_0}_2\norm{a_j}_2^2 =\norm{a_0}_2\mathcal E_{\mathsf N}(x).
\]
For $k,j,m\ne0$, invariance of the sum under permutations of the three indices gives
\[
\sum_{\substack{k+j+m=0\\k,j,m\ne0}} \frac{\ell(j)+\ell(m)-\ell(k)}2\norm{a_k}_2\norm{a_j}_2\norm{a_m}_2=\frac12\sum_{\substack{k+j+m=0\\k,j,m\ne0}} \ell(k)\norm{a_k}_2\norm{a_j}_2\norm{a_m}_2 =\frac12\inner{\mathsf N_Gu,u^2}_2.
\]
The last equality is Fourier orthogonality on $L_2(\widehat G,m)$; the symmetry $\norm{a_{-k}}_2=\norm{a_k}_2$ makes $u$ real. Combining the two contributions proves~\eqref{Ineq: Crossed coefficient cubic bound}.
\end{proof}

To bound the cubic integral, we use the following scalar Fourier estimate. Its proof in Section~\ref{Sec: Weighted cubic estimates} is a direct calculation that retains the word-length weights.

\begin{lemma}\label{Lem: Cyclic weighted cubic bound}
Let $G=\mathbb Z$ or $G=\mathbb Z_n$, where $n=2$ or $n\ge4$. For every real trigonometric polynomial $u$ on $\widehat G$ satisfying $\int_{\widehat G}u\,dm=0$, with $\mathsf N_G\chi_k=\ell(k)\chi_k$, one has
\begin{equation}\label{Ineq: Cyclic weighted cubic bound}
\bigl|\inner{\mathsf N_Gu,u^2}_2\bigr|^2 \le4\norm u_2^2\left(\norm{\mathsf N_Gu}_2^4 -\inner{u,\mathsf N_Gu}_2^2\right).
\end{equation}
\end{lemma}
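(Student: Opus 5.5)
The plan is to trade the cubic pairing for an $L_2$ bound on a carré du champ, and then prove that bound coefficientwise.

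\textbf{Step 1: rewrite the cubic pairing.} Write $u=\sum_{k\ne0}b_k\chi_k$ with $b_{-k}=\overline{b_k}$, and $\delta_\theta u=u-\alpha_\theta u$, so $(\delta_\theta u)(\varphi)=u(\varphi)-u(\varphi+\theta)$. Define the scalar carré du champ
\[
\Gamma(u)=\int_{\widehat G}(\delta_\theta u)^2\,\dd\nu(\theta).
\]
By~\eqref{Eqn: Crossed jump identities}, its Fourier coefficients are
\[
\widehat{\Gamma(u)}(s)=\sum_{j+m=s}\frac{\ell(j)+\ell(m)-\ell(s)}2\,b_jb_m .
\]
In particular $\int\Gamma(u)\,\dd m=\inner{u,\mathsf N_Gu}_2$. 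The same computation as in the proof of Lemma~\ref{Lem: Crossed resolvent cubic representation}, with all three slots equal to $u$, gives
\[
\int_{\widehat G}\inner{u,\Gamma(u)}_2\,\dd m=\frac12\sum_{k+j+m=0}\ell(k)b_kb_jb_m .
\]
Here the weight $\ell(j)+\ell(m)-\ell(k)$ is symmetrized, which gives $(2-1)/2$ times the fully symmetric sum. Hence
\[
\inner{\mathsf N_Gu,u^2}_2=2\inner{u,\Gamma(u)}_2=2\inner{u,\Gamma(u)-\inner{u,\mathsf N_Gu}_2}_2,
\]
where the last step uses $\int u\,\dd m=0$. Cauchy--Schwarz then gives
\[
\bigl|\inner{\mathsf N_Gu,u^2}_2\bigr|^2\le 4\norm u_2^2\,\bigl(\norm{\Gamma(u)}_2^2-\inner{u,\mathsf N_Gu}_2^2\bigr).
\]
So~\eqref{Ineq: Cyclic weighted cubic bound} reduces to the $\Gamma_2$-type estimate
\[
\norm{\Gamma(u)}_2\le\norm{\mathsf N_Gu}_2^2 .
\]

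\textbf{Step 2: the estimate on $\mathbb Z$.} Here $\ell(j)+\ell(m)-\ell(j+m)$ vanishes unless $j$ and $m$ have opposite signs, in which case it equals $2\min\{|j|,|m|\}$. Write $u=f+\bar f$ with $f=\sum_{j>0}b_j\chi_j$. Using
\[
\min\{j,k\}=\sum_{r\ge1}\mathbf 1_{r\le j}\mathbf 1_{r\le k},
\]
we get the layer-cake identity
\[
\Gamma(u)=2\sum_{r\ge1}|f_r|^2,\qquad f_r=\sum_{j\ge r}b_j\chi_j .
\]
Then I would estimate $\norm{\sum_r|f_r|^2}_2$ by expanding the square and bounding the overlap terms $\int|f_r|^2|f_{r'}|^2$. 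These integrals involve only coefficients with $j,j'\ge\max\{r,r'\}$. The goal is to compare with
\[
\norm{\mathsf N_Gu}_2^4=\Bigl(2\sum_j j^2|b_j|^2\Bigr)^2=4\Bigl(\sum_r\sum_{j\ge r}(2r-1)|b_j|^2\Bigr)^2 .
\]
The idea is to exploit that $|f_r|^2$ has Fourier support in $(-\infty,\infty)$ but with coefficient pairs restricted to $j,k\ge r$. A direct Cauchy--Schwarz over pairs $(j,k)$, weighted so that each $\min\{j,k\}^2\le jk\cdot\min\{j,k\}/\max\{j,k\}$, is what I would try first.

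\textbf{Step 3: the cyclic case and the main obstacle.} For $\mathbb Z_n$ the same scheme applies once I tabulate the nonnegative weights $\ell(j)+\ell(m)-\ell(j+m)$. For $n=2$ the case is trivial: $u=b\chi_1$, so $\Gamma(u)=b^2$ is constant and the right side of~\eqref{Ineq: Cyclic weighted cubic bound} is nonnegative while the left side vanishes. For $n\ge4$ there is wraparound: when $\ell(j)+\ell(m)>n/2$, the sum $j+m$ wraps and the weight becomes $\ell(j)+\ell(m)-(n-\ell(j)-\ell(m))$ rather than $2\min$. I would split the pairs according to whether the wraparound occurs, and bound the wrapped pairs using $\ell\le n/2$. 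The case $n=3$ must fail: there the weight for $j=m=1$ equals $\tfrac12\cdot(1+1-1)\cdot2$, and $\Gamma$ has a large nonconstant part. So the inequality $\norm{\Gamma(u)}_2\le\norm{\mathsf N_Gu}_2^2$ is the genuinely delicate step. I expect the coefficientwise verification of this bound, especially the wrapped terms for small $n\ge4$, to be the main obstacle. If the clean bound on $\norm{\Gamma(u)}_2$ fails, I would fall back to proving only the centered version needed above:
\[
\norm{\Gamma(u)-\inner{u,\mathsf N_Gu}_2}_2^2\le\norm{\mathsf N_Gu}_2^4-\inner{u,\mathsf N_Gu}_2^2 .
\]
This version has the extra slack coming from the spectral-gap identity $\norm{\mathsf N_Gu}_2^2-\inner{u,\mathsf N_Gu}_2=\sum\ell(\ell-1)|b|^2$.
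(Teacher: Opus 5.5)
Your Step~1 is correct. With $\Gamma(u)=\int_{\widehat G}(\delta_\theta u)^2\,d\nu(\theta)$ one has $\inner{\mathsf N_Gu,u^2}_2=2\inner{u,\Gamma(u)}_2$ and $\int_{\widehat G}\Gamma(u)\,dm=\inner{u,\mathsf N_Gu}_2$. Cauchy--Schwarz therefore reduces the lemma to $\norm{\Gamma(u)}_2\le\norm{\mathsf N_Gu}_2^2$.

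The gap is that nothing after this reduction is proved. Step~2 is a sketch, and Step~3 explicitly leaves the wrapped pairs on $\mathbb Z_n$ open. The inequality that now carries the whole lemma is never established.

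Your reduced bound is stronger than the lemma: you put all of $\norm u_2$ on the single factor and all the weights $\ell(\ell\mp1)$ on the pairs. It is also sharp. On $\mathbb Z_4$, take $u=a(\chi_1+\chi_{-1})+c\chi_2$ with $a,c$ real. Then $\Gamma(u)=2a^2+2c^2+2ac(\chi_1+\chi_{-1})$ and
\[
\norm{\mathsf N_Gu}_2^4-\norm{\Gamma(u)}_2^2=12c^4,
\]
so the two sides agree through order $c^2$ as $c\to0$. No constant can be lost in treating the wraparound. On $\mathbb Z$ the same-sign pairs contribute nothing to $\Gamma(u)$. On $\mathbb Z_n$, however, same-sign pairs with $\ell(j)+\ell(m)>n/2$ carry weight $\ell(j)+\ell(m)-n/2$, and these are exactly the terms your outline postpones.

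The paper never needs your stronger bound. It groups the zero-sum triples into the families $i+j=k$ and $i+j+k=n$, and applies Cauchy--Schwarz with weight $k(k-1)$ on the single factor. It then bounds the pair sums with $d_{ij}=2(i(i+1)+j(j+1))$, so that $\norm u_2^2$ comes out of $\sum_{i,j}d_{ij}b_i^2b_j^2=4\bigl(\sum_ib_i^2\bigr)\bigl(\sum_jj(j+1)b_j^2\bigr)$. The frequency $k=m$ in the odd case and the self-inverse frequency in the even case are bounded separately.

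Your fallback gives no extra room. Since $\int_{\widehat G}\Gamma(u)\,dm=\inner{u,\mathsf N_Gu}_2$,
\[
\norm{\Gamma(u)-\inner{u,\mathsf N_Gu}_2}_2^2=\norm{\Gamma(u)}_2^2-\inner{u,\mathsf N_Gu}_2^2,
\]
so the centered inequality is literally the same statement.

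On the positive side, your route does close on $\mathbb Z$, and quickly:
\begin{itemize}
\item bound $\min\{j,k\}\le\sqrt{jk}$ and set $g=\sum_{j\ge1}\sqrt j\,|b_j|\chi_j$;
\item this gives $\norm{\Gamma(u)}_2^2\le4\norm{|g|^2}_2^2=4\norm{g^2}_2^2$;
\item Cauchy--Schwarz with $\sum_{j+k=t}\frac1{jk}=\frac2t\sum_{j=1}^{t-1}\frac1j\le1$ then yields $\norm{\Gamma(u)}_2^2\le4\bigl(\sum_jj^2|b_j|^2\bigr)^2=\norm{\mathsf N_Gu}_2^4$.
\end{itemize}
For $\mathbb Z_n$ with $n\ge4$, you still owe a complete argument for the wrapped terms, and that is where the paper's case analysis does its work.
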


We can now prove the homogeneous comparison for crossed products.
\begin{proposition}\label{Thm: Crossed polynomial comparison}
Let $x=\sum_kU^ka_k\ge\varepsilon\mathbf1>0$ be a bounded Fourier polynomial in $\mathcal M$. If $x\ne P_0x$, then
\begin{equation}\label{Ineq: Crossed product homogeneous comparison}
\mathcal Q_G(x)>0.
\end{equation}
If $x=P_0x$, then $\mathcal Q_G(x)=0$.
\end{proposition}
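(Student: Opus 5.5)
The plan is to chain together the bounds already set up in this subsection. First, the degenerate case. If $x=P_0x=a_0$, then $\mathsf Nx=0$, $\mathcal E_{\mathsf N}(x)=0$, and every $a_k$ with $k\ne0$ vanishes. Each term of $\mathcal Q_G(x)$ is then zero; the entropy sum has no $k\ne0$ terms and $\ell(0)=0$.

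Now assume $x\ne P_0x$, so that $\mathcal E_{\mathsf N}(x)>0$. Fix $z>0$, to be chosen. Since $\log(T_x/z)=\log T_x-\log z$ on $\mathcal H$, we have
\[
\inner{v_x,(\log C_x-\log T_x)v_x}_{\nu,2}=\inner{v_x,(\log C_x-\log(T_x/z))v_x}_{\nu,2}-\mathcal E_{\mathsf N}(x)\log z .
\]
Applying \eqref{Ineq: Crossed scaled logarithmic decomposition}, \eqref{Eqn: First part of difference of Log}, and Lemma~\ref{Lem: Crossed resolvent cubic representation} gives
\[
\inner{v_x,(\log C_x-\log T_x)v_x}_{\nu,2}\ge\mathcal E_{\mathsf N}(x)-\frac{I}{z}-\mathcal E_{\mathsf N}(x)\log z,
\]
where $I=\int_0^\infty\inner{S_r^{-1}v_x,T_xS_r^{-1}v_x}_{\nu,2}\,\dd r$ is finite and positive. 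The natural choice is $z=I/\mathcal E_{\mathsf N}(x)$, since this maximizes the right side in $z$. The right side then equals $-\mathcal E_{\mathsf N}(x)\log z$. Inserting this into \eqref{Ineq: Crossed logarithmic reduction} yields
\[
\mathcal Q_G(x)\ge\mathcal E_{\mathsf N}(x)\Bigl(\frac{\norm{\mathsf Nx}_2^2}{\mathcal E_{\mathsf N}(x)}-1+\log\norm x_2-\log z\Bigr).
\]
By \eqref{Ineq: Crossed coefficient cubic bound}, $z\le\norm{a_0}_2+\inner{\mathsf N_Gu,u^2}_2/(2\mathcal E_{\mathsf N}(x))$. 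It therefore suffices to prove the strict scalar inequality
\[
\norm{a_0}_2+\frac{\inner{\mathsf N_Gu,u^2}_2}{2\mathcal E_{\mathsf N}(x)}<\norm x_2\exp\Bigl(\frac{\norm{\mathsf Nx}_2^2}{\mathcal E_{\mathsf N}(x)}-1\Bigr).
\]

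The left side of this inequality is at least $z>0$, so taking logarithms is legitimate. To prove it, I would translate everything to the scalar function $u$. Fourier orthogonality gives $\norm u_2^2=\sum_{k\ne0}\norm{a_k}_2^2$, $\inner{u,\mathsf N_Gu}_2=\mathcal E_{\mathsf N}(x)$, and $\norm{\mathsf N_Gu}_2^2=\norm{\mathsf Nx}_2^2$, and also $\norm x_2^2=\norm{a_0}_2^2+\norm u_2^2$. Set $E=\inner{u,\mathsf N_Gu}_2>0$ and $\rho=(\norm{\mathsf N_Gu}_2^2-E)/E\ge0$, which is nonnegative because $\ell(k)\ge1$ for $k\ne0$. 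Lemma~\ref{Lem: Cyclic weighted cubic bound} then gives
\[
\frac{|\inner{\mathsf N_Gu,u^2}_2|}{2E}\le\frac{\norm u_2}{E}\sqrt{\norm{\mathsf N_Gu}_2^4-E^2}=\norm u_2\sqrt{(1+\rho)^2-1}=\norm u_2\sqrt{\rho(2+\rho)} .
\]
By Cauchy--Schwarz, the left side of the target is at most
\[
\norm{a_0}_2+\norm u_2\sqrt{\rho(2+\rho)}\le\norm x_2\sqrt{1+2\rho+\rho^2}=\norm x_2(1+\rho).
\]
Finally $1+\rho\le e^{\rho}$, with equality only at $\rho=0$.

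The main obstacle is strictness, and it sits in this last step. When $\rho>0$, the inequality $1+\rho<e^\rho$ is strict. When $\rho=0$, the cubic bound forces $\inner{\mathsf N_Gu,u^2}_2=0$, so the left side is $\norm{a_0}_2$. This is strictly less than $\norm x_2$ because $u\ne0$, which holds since $x\ne P_0x$. In either case $\log z<\log\norm x_2+\rho$, and since $\norm{\mathsf Nx}_2^2/\mathcal E_{\mathsf N}(x)-1=\rho$, this gives $\mathcal Q_G(x)>0$. A secondary point to check is the choice of scaling $z$ in \eqref{Ineq: Crossed scaled logarithmic decomposition}: it requires $I>0$, which Lemma~\ref{Lem: Crossed resolvent cubic representation} supplies.
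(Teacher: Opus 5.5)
Your proposal is correct and follows the paper's proof essentially step for step. It uses the same choice $z=I/\mathcal E_{\mathsf N}(x)$ in the scaled resolvent comparison, and the same combination of Lemmas~\ref{Lem: Crossed resolvent cubic representation} and~\ref{Lem: Cyclic weighted cubic bound} with Cauchy--Schwarz to reach $z\le(1+\rho)\norm x_2$, where the paper writes $\kappa=1+\rho$. The two-case strictness argument is also the paper's: $1+\rho<e^\rho$ when $\rho>0$, and $z\le\norm{a_0}_2<\norm x_2$ when $\rho=0$. The only differences are cosmetic, such as motivating $z$ as the maximizer of $\mathcal E_{\mathsf N}(x)-I/z-\mathcal E_{\mathsf N}(x)\log z$.
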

\begin{proof}
Assume $x\ne P_0x$ and put
\[
 \kappa=\frac{\norm{\mathsf Nx}_2^2}{\mathcal E_{\mathsf N}(x)}\ge1.
\]
The last inequality follows from $\ell(k)\ge1$ for $k\ne0$. Fourier orthogonality gives
\[
\norm u_2=\norm{x-a_0}_2,\qquad \inner{\mathsf N_Gu,u}_2=\mathcal E_{\mathsf N}(x),\qquad \norm{\mathsf N_Gu}_2=\norm{\mathsf Nx}_2.
\]
Choose the scaling parameter in~\eqref{Ineq: Crossed scaled logarithmic decomposition} to be
\begin{equation}\label{Eqn: Crossed spectral z}
z=\frac{1}{\mathcal E_{\mathsf N}(x)}\int_0^\infty\int_{\widehat G}\tau\bigl(x(\delta_\theta w_r)^2\bigr)\,\dd\nu(\theta)\,\dd r>0.
\end{equation}
By~\eqref{Eqn: First part of difference of Log} and Lemma~\ref{Lem: Crossed resolvent cubic representation},
\[
\inner{v_x,(\log C_x-\log(T_x/z))v_x}_{\nu,2}\ge \mathcal E_{\mathsf N}(x)-\frac1z\int_0^\infty\inner{S_r^{-1}v_x,T_xS_r^{-1}v_x}_{\nu,2}\,\dd r=0.
\]
Since $\log(T_x/z)=\log T_x-(\log z)I$, this proves
\begin{equation}\label{Ineq: Crossed resolvent logarithmic comparison}
\inner{v_x,(\log T_x-\log C_x)v_x}_{\nu,2}\le \mathcal E_{\mathsf N}(x)\log z.
\end{equation}
Combining the two logarithmic representations gives
\begin{equation}\label{Eqn: Crossed endpoint bound}
\operatorname{Re}\inner{\mathsf Nx,x\log x}_2\le \mathcal E_{\mathsf N}(x)(1+\log z)+\frac12\sum_k\ell(k)\Ent_{\tau_{\mathcal N}}(|a_k|^2).
\end{equation}
The filtered cubic estimate and Lemma~\ref{Lem: Cyclic weighted cubic bound} yield
\begin{equation}\label{Ineq: Crossed cubic energy bound}
z\le\norm{a_0}_2+\frac{\inner{\mathsf N_Gu,u^2}_2}{2\mathcal E_{\mathsf N}(x)}\le\norm{a_0}_2+\norm u_2\sqrt{\kappa^2-1}.
\end{equation}
Using $\norm{a_0}_2^2+\norm u_2^2=\norm x_2^2$, Cauchy--Schwarz gives
\begin{equation}\label{Ineq: Crossed spectral z bound}
z\le\sqrt{\norm{a_0}_2^2+\norm u_2^2}\sqrt{1+\kappa^2-1}=\kappa\norm x_2,
\end{equation}
and hence
\begin{equation}\label{Ineq: Crossed logarithmic energy bound}
\mathcal E_{\mathsf N}(x)\log z\le \mathcal E_{\mathsf N}(x)\log\norm x_2+\mathcal E_{\mathsf N}(x)\log\kappa.
\end{equation}
Substituting in~\eqref{Ineq: Crossed logarithmic reduction}, we obtain
\begin{equation}\label{Ineq: Crossed operator reduction}
\mathcal Q_G(x)\ge \mathcal E_{\mathsf N}(x)\left(\kappa-1-\log\frac z{\norm x_2}\right)\ge \mathcal E_{\mathsf N}(x)(\kappa-1-\log\kappa)\ge0.
\end{equation}
If $\kappa>1$, the last expression is strictly positive. If $\kappa=1$, inequality~\eqref{Ineq: Crossed cubic energy bound} gives $z\le\norm{a_0}_2<\norm x_2$, so the first expression in~\eqref{Ineq: Crossed operator reduction} is strictly positive. For $x=P_0x$, every term in~\eqref{Eqn: Crossed homogeneous comparison} vanishes.
\end{proof}

\begin{proof}[Proof of Theorem~\ref{Thm: Sharp crossed product LSI}]
First let $x\ge\varepsilon\mathbf1>0$ be a bounded Fourier polynomial. Proposition~\ref{Thm: Crossed polynomial comparison} and Proposition~\ref{Prop: Crossed first derivative} give
\[
\frac{\dd}{\dd t}\mathcal F_G(x_t)=-4\mathcal Q_G(x_t)\le0.
\]
Integrating as in~\eqref{Eqn: Crossed integrated deficit} gives $\mathcal F_G(x)\ge0$, or equivalently
\begin{equation}\label{Ineq: Crossed coefficient entropy}
\Ent_\tau(x^2)\le\sum_k\Ent_{\tau_{\mathcal N}}(|a_k|^2)+2\mathcal E_{\mathsf N}(x).
\end{equation}
For a bounded positive Fourier polynomial, apply this inequality to $x+\varepsilon\mathbf1$ and let $\varepsilon\downarrow0$. Bounded functional calculus and continuity of $s^2\log s^2$ at zero give $\Ent_\tau((x+\varepsilon\mathbf1)^2)\to\Ent_\tau(x^2)$ and $\Ent_{\tau_{\mathcal N}}((a_0+\varepsilon\mathbf1)^2)\to\Ent_{\tau_{\mathcal N}}(a_0^2)$. The coefficients with $k\ne0$ and $\mathcal E_{\mathsf N}(x)$ are unchanged. Thus~\eqref{Ineq: Crossed coefficient entropy} holds for all such polynomials.

For such polynomials in $\Dom(A^{1/2})$, each coefficient belongs to $\Dom(B^{1/2})$. The LSI on $\mathcal N$ in~\eqref{Ineq: Base LSI for general element} gives
\[
\Ent_\tau(x^2)\le\sum_k\Ent_{\tau_{\mathcal N}}(|a_k|^2)+2\mathcal E_{\mathsf N}(x)\le C_B\mathcal E_{\widehat B}(x)+2\mathcal E_{\mathsf N}(x).
\]
By density of bounded positive Fourier polynomials in $\Dom(A^{1/2})_+$ for the form norm $(\norm{x}_2^2+\mathcal E_A(x))^{1/2}$, the identity~\eqref{Eqn: Crossed form}, and lower semicontinuity of entropy under $L_1$-convergence of the squares, inequality~\eqref{Ineq: Crossed LSI with cyclic group} extends to every $x\in\Dom(A^{1/2})_+$. This density follows by approximating $x^{1/2}$ in $L_4$ by bounded Fourier polynomials and applying $e^{-tA}$ to their squares $x_k^*x_k$.

Since $\mathcal E_A=\mathcal E_{\widehat B}+\mathcal E_{\mathsf N}$, the preceding inequality gives the LSI with constant $\max\{C_B,2\}$. Restriction to $\mathcal N$ gives the lower bound $C_B$. Take $h=(U+U^*)/\norm{U+U^*}_2$ in the scalar group algebra. Then $Ah=h$ and $\tau h=0$, so
\[
\Ent_\tau((\mathbf1+\varepsilon h)^2)=2\varepsilon^2+O(\varepsilon^3), \qquad \mathcal E_A(\mathbf1+\varepsilon h)=\varepsilon^2.
\]
For small real $\varepsilon$, these positive inputs force the lower bound $2$, proving sharpness for either group.
\end{proof}
\begin{proof}[Proof of Corollary \ref{Thm: Sharp LSI on matrix algebra}]
First apply Theorem~\ref{Thm: Sharp crossed product LSI} with $\mathcal N=\mathbb C$, $B=0$, and $G=\mathbb Z_n$ to obtain the optimal scalar LSI constant $2$ for $B_n$. Applying the theorem again with $\mathcal N=\mathcal D_n$ and $B=B_n$ gives the LSI with constant $2$ for $A_n$. Restriction to $\mathcal D_n$ proves sharpness.
\end{proof}

\subsection{Scalar twisted crossed products}\label{Sec: Scalar twisted products}
We extend the sharp LSI to scalar twisted crossed products by $\mathbb Z^d$ through iteration of the one-dimensional result, and then specialize to quantum tori. For background on twisted crossed products and Fourier analysis, see~\cite{MR241994,MR2511867}.

A normalized $2$-cocycle on $\mathbb Z^d$ is a map $\sigma:\mathbb Z^d\times\mathbb Z^d\to\mathbb T$, where $\mathbb T=\{z\in\mathbb C:|z|=1\}$, satisfying
\begin{equation}\label{Eqn: Scalar cocycle}
\sigma(k,\ell)\sigma(k+\ell,m)=\sigma(\ell,m)\sigma(k,\ell+m),\qquad\sigma(0,k)=\sigma(k,0)=1.
\end{equation}
For a normal trace-preserving action $\vartheta$ of $\mathbb Z^d$ on $\mathcal N$, the twisted crossed product $\mathcal M=\mathcal N\rtimes_{\vartheta,\sigma}\mathbb Z^d$ is the von Neumann algebra generated, in its regular representation, by $\mathcal N$ and unitaries $u_k$ satisfying
\begin{equation}\label{Eqn: Scalar twisted covariance}
u_k a u_k^*=\vartheta_k(a),\qquad u_k u_\ell=\sigma(k,\ell)u_{k+\ell}.
\end{equation}
The canonical faithful normal tracial state is determined by
\[
\tau(u_k a)=\mathbf1_{\{k=0\}}\tau_{\mathcal N}(a).
\]
We give a direct choice of scalar phases; compare the classification in~\cite{MR260890,MR281803}. Put $U_i=u_{e_i}$ and choose a real skew-symmetric matrix $\Theta$ such that $e^{2\pi i\Theta_{ij}}=\sigma(e_i,e_j)/\sigma(e_j,e_i)$. Thus $U_iU_j=e^{2\pi i\Theta_{ij}}U_jU_i$. For $k\in\mathbb Z^d$, set $w_k=U_1^{k_1}\cdots U_d^{k_d}$. The cocycle multiplication rule shows that $w_k$ is a scalar multiple of $u_k$, also for negative powers. Moving each $U_i^{\ell_i}$ past the factors $U_j^{k_j}$ with $j>i$ gives
\[
w_kw_\ell=\exp\left(-2\pi i\sum_{i<j}\Theta_{ij}k_j\ell_i\right)w_{k+\ell}.
\]
Define $v_k=\exp(-\pi i\sum_{i<j}\Theta_{ij}k_ik_j)w_k$. Then
\[
v_kv_\ell=\exp\left(\pi i\sum_{i<j}\Theta_{ij}(k_i\ell_j-k_j\ell_i)\right)v_{k+\ell}=e^{\pi i k^{\mathrm T}\Theta\ell}v_{k+\ell}.
\]
The scalar phases leave $v_kav_k^*=\vartheta_k(a)$ and the trace unchanged and preserve the Fourier indices. Relabeling $v_k$ as $u_k$, we therefore use the real skew-symmetric matrix $\Theta=(\Theta_{ij})$ and write
\begin{equation}\label{Eqn: Skew matrix cocycle}
\sigma(k,\ell)=e^{\pi i k^{\mathrm T}\Theta\ell}=\exp\left(\pi i\sum_{m,j=1}^d k_m \ell_j\Theta_{mj}\right),\qquad k,\ell\in\mathbb Z^d.
\end{equation}
In particular, $\sigma(me_i,ne_j)=e^{\pi i mn\Theta_{ij}}$ for coordinate vectors $e_i,e_j$ and integers $m,n$. We use this realization throughout the following computation. Comparing the two orders of multiplication in \eqref{Eqn: Scalar twisted covariance}, we obtain
\begin{equation}\label{Eqn: Scalar commutation phase}
u_ku_\ell=\frac{\sigma(k,\ell)}{\sigma(\ell,k)}u_\ell u_k=e^{2\pi i k^{\mathrm T}\Theta\ell}u_\ell u_k.
\end{equation}

We recall B\'edos's decomposition theorem for cocycle crossed products~\cite[Theorem~1]{MR1124823}.

\begin{theorem}\label{Thm: Bedos decomposition}
Let $1\to H\to G\to K\to1$ be an exact sequence of discrete groups, and let $(\eta,\omega)$ be a cocycle action of $G$ on a von Neumann algebra $\mathcal N$. Write $(\eta_H,\omega_H)$ for its restriction to $H$. There is a cocycle action $(\beta,\omega_K)$ of $K$ on $\mathcal N\rtimes_{\eta_H,\omega_H}H$ for which
\[
\mathcal N\rtimes_{\eta,\omega}G\cong\bigl(\mathcal N\rtimes_{\eta_H,\omega_H}H\bigr)\rtimes_{\beta,\omega_K}K.
\]
\end{theorem}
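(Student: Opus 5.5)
We would prove this by realizing both sides inside the regular representation of $\mathcal N\rtimes_{\eta,\omega}G$ and comparing Fourier decompositions with respect to a section of the quotient map. Write $q:G\to K$ for the quotient map. Choose a set-theoretic section $s:K\to G$ with $q\circ s=\id$ and $s(e)=e$. Every $g\in G$ is then written uniquely as $g=h\,s(k)$ with $h\in H$ and $k=q(g)$. Let $(\lambda(g))_{g\in G}$ be the canonical unitaries in $\mathcal M=\mathcal N\rtimes_{\eta,\omega}G$, so that $\lambda(g)a\lambda(g)^*=\eta_g(a)$ and $\lambda(g)\lambda(g')=\omega(g,g')\lambda(gg')$. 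Let $\mathbb E_{\mathcal N}$ be the canonical faithful normal conditional expectation, which keeps the coefficient of $\lambda(e)$.

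\emph{Step 1: embedding the subalgebra.} Let $\mathcal P\subset\mathcal M$ be the von Neumann algebra generated by $\mathcal N$ and $\lambda(h)$, $h\in H$. We will show $\mathcal P\cong\mathcal N\rtimes_{\eta_H,\omega_H}H$. Restricting the regular representation on $\ell_2(G;\mathcal K)$ to the $H$-invariant subspaces $\ell_2(Hs(k);\mathcal K)$ shows that the generators satisfy the $(\eta_H,\omega_H)$ covariance relations. On each such subspace they are unitarily equivalent to the regular representation of the $H$-crossed product; the equivalence is implemented by $\xi\mapsto(\xi_{hs(k)})_{h\in H}$, twisted by the appropriate scalar or $\mathcal N$-unitary factors coming from $\omega$. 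Hence $\mathcal P$ is a multiple of the regular $H$-crossed product, and the two algebras are isomorphic. The restriction of $\mathbb E_{\mathcal N}$ to $\mathcal P$ is the canonical expectation.

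\emph{Step 2: the quotient cocycle action.} Define
\[
\beta_k=\operatorname{Ad}\lambda(s(k))|_{\mathcal P},\qquad \omega_K(k,l)=\lambda(s(k))\lambda(s(l))\lambda(s(kl))^*.
\]
Since $H$ is normal, $s(k)Hs(k)^{-1}=H$, and the relations in $\mathcal M$ show that $\beta_k(\mathcal P)=\mathcal P$. Moreover,
\[
\omega_K(k,l)=\omega(s(k),s(l))\,\lambda(n(k,l))\cdot(\text{$\omega$-correction}),\qquad n(k,l)=s(k)s(l)s(kl)^{-1}\in H,
\]
so $\omega_K(k,l)$ is a unitary in $\mathcal P$. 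The cocycle-action identities
\[
\beta_k\beta_l=\operatorname{Ad}\omega_K(k,l)\circ\beta_{kl},\qquad \omega_K(k,l)\,\omega_K(kl,m)=\beta_k(\omega_K(l,m))\,\omega_K(k,lm),
\]
together with the normalization $\omega_K(e,\cdot)=\omega_K(\cdot,e)=\mathbf1$, are then formal consequences of associativity in $\mathcal M$. This is exactly how the unitaries $\lambda(s(k))$ were combined in the definitions.

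\emph{Step 3: the isomorphism.} The algebra $\mathcal M$ is generated by $\mathcal P$ and the unitaries $\lambda(s(k))$, $k\in K$. Conditional expectation onto $\mathcal P$ is given by
\[
\mathbb E_{\mathcal P}\Bigl(\sum_g\lambda(g)a_g\Bigr)=\sum_{g\in H}\lambda(g)a_g.
\]
It is faithful and normal, and $\mathbb E_{\mathcal P}(\lambda(s(k))^*y)$ recovers the $k$-th $K$-Fourier coefficient of $y$. In particular, $\mathbb E_{\mathcal P}(\lambda(s(k)))=0$ for $k\ne e$. The decomposition
\[
\ell_2(G;\mathcal K)\cong\ell_2\bigl(K;\ell_2(H;\mathcal K)\bigr),\qquad g=h\,s(k)\leftrightarrow(k,h),
\]
is a unitary. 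Under this unitary, $\mathcal P$ is carried to the image of the $\beta$-twisted embedding, and $\lambda(s(k))$ is carried to the twisted shift. Together these form the regular representation of $\mathcal P\rtimes_{\beta,\omega_K}K$, which yields the isomorphism. Alternatively, one can invoke uniqueness of the reduced twisted crossed product among covariant realizations that admit a faithful normal expectation killing the nontrivial group unitaries.

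The main obstacle is Step~3, specifically the bookkeeping of the $\omega$-phases when transporting the regular representation through the unitary $g\leftrightarrow(k,h)$. We would first treat the untwisted case $\omega=1$ to fix the formulas. We would then insert the cocycle factors, checking them on the dense $*$-algebra of finite Fourier sums. On that algebra, weak$^*$ density and normality finish the argument.
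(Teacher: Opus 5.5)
Your argument is correct, but it does not follow the paper's route, because the paper gives no proof of this statement: it quotes B\'edos's theorem \cite{MR1124823} as a black box. You reconstruct the standard regular-representation proof, and its key points check out. We have $\beta_k(\mathcal P)=\mathcal P$ by normality of $H$. Also $\omega_K(k,l)=\omega(s(k),s(l))\,\omega(n(k,l),s(kl))^*\lambda(n(k,l))\in\mathcal P$. The cocycle identities are just associativity of the unitaries $\lambda(s(\cdot))$.

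One bookkeeping remark concerns your labelling $g=hs(k)$. With it, $\lambda(h')$ acts on $\ell_2(Hs(k);\mathcal K)$ as a plain shift. To see the $\beta_k^{-1}$-twisted copy of the block over $H$, label instead by $g=s(k)h$, which is legitimate since $Hs(k)=s(k)H$.

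Your route supplies exactly what the paper later uses but the quoted statement does not record. The statement only asserts that some $(\beta,\omega_K)$ and some isomorphism exist. By contrast, \eqref{Eqn: Scalar twisted iteration} needs three things: $\beta_j=\operatorname{Ad}(U_j)|_{\mathcal M_{j-1}}$, a trivial quotient cocycle, and an identification preserving the trace and the Fourier indices. Your explicit formulas give all three. For the section $s(k)=ke_j$ one has $n(k,l)=0$ and $\sigma(ke_j,le_j)=e^{\pi ikl\Theta_{jj}}=1$, so $\omega_K\equiv1$. The isomorphism fixes $\mathcal P$ and sends $\lambda(s(k))$ to the canonical unitary.

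In this tracial setting, your alternative in Step~3 is also the shortest complete argument. Since $\tau(p\,\lambda(s(k)))=0$ for $p\in\mathcal P$ and $k\ne e$, we get $L_2(\mathcal M,\tau)=\bigoplus_{k\in K}\lambda(s(k))L_2(\mathcal P)$. On this space $\mathcal M$ acts by the regular representation of $\mathcal P\rtimes_{\beta,\omega_K}K$ induced from the standard form of $\mathcal P$. The same computation with $\tau(a\lambda(h))=0$ for $h\ne e$ gives Step~1, so no phase bookkeeping beyond $\omega_K$ is needed. The citation, conversely, covers arbitrary non-tracial cocycle actions with operator-valued cocycles without any of this verification.
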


In our setting, put $U_j=u_{e_j}$, $\mathcal M_0=\mathcal N$, and $\mathcal M_j=W^*(\mathcal N,U_1,\ldots,U_j)$. Apply Theorem~\ref{Thm: Bedos decomposition} to $\mathbb Z^{j-1}\subset\mathbb Z^j$ with the coordinate section $n\mapsto ne_j$. Since \eqref{Eqn: Skew matrix cocycle} is trivial on $\mathbb Ze_j$, the induced quotient cocycle is trivial. Thus, with $\beta_j=\operatorname{Ad}(U_j)|_{\mathcal M_{j-1}}$, we obtain the normal trace-preserving identification
\begin{equation}\label{Eqn: Scalar twisted iteration}
\mathcal M_j\cong\mathcal M_{j-1}\rtimes_{\beta_j}\mathbb Z.
\end{equation}
This identification preserves the Fourier indices.

For the intermediate direct-sum operators $A_j(u_ka)=u_k(B+|k|_1)a$, $k\in\mathbb Z^j$, with $A_0=B$, the action $\beta_j$ applies $\vartheta_{e_j}$ to each Fourier coefficient and multiplies it by a scalar phase. Hence \eqref{Eqn: Pairwise Covariance} implies that $\beta_j$ preserves $\Dom(A_{j-1})$ and commutes with $A_{j-1}$ there. Since coordinate lengths add and scalar phases preserve the coefficient generator, the one-dimensional construction iterates as follows.

\begin{lemma}\label{Lem: Scalar twisted compatible generator}
Let $B$ be a symmetric completely Markov generator on $L_2(\mathcal N)$, let $\vartheta$ be a normal trace-preserving action of $\mathbb Z^d$ satisfying \eqref{Eqn: Pairwise Covariance}, and let $\sigma$ be a normalized scalar $2$-cocycle. On $\mathcal M=\mathcal N\rtimes_{\vartheta,\sigma}\mathbb Z^d$, the direct-sum operator $A(u_ka)=u_k(B+|k|_1)a$, with its natural direct-sum domain, is a symmetric completely Markov generator. Its semigroup satisfies
\begin{equation}\label{Eqn: Scalar twisted Markov semigroup}
e^{-tA}(u_ka)=e^{-t|k|_1}u_ke^{-tB}(a),\qquad t\geq0.
\end{equation}
\end{lemma}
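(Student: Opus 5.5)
The plan is to prove Lemma~\ref{Lem: Scalar twisted compatible generator} by induction on $d$, using the iteration~\eqref{Eqn: Scalar twisted iteration} and applying Lemma~\ref{Lem: Compatible generator} at each step with $G=\mathbb Z$. The inductive statement for $0\le j\le d$ is the following. On $\mathcal M_j=W^*(\mathcal N,U_1,\ldots,U_j)$, the direct-sum operator $A_j(u_ka)=u_k(B+|k|_1)a$, $k\in\mathbb Z^j$, is a symmetric completely Markov generator whose semigroup acts by $e^{-tA_j}(u_ka)=e^{-t|k|_1}u_ke^{-tB}(a)$. The case $j=0$ is the hypothesis on $B$. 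The case $j=d$ is the lemma.

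For the inductive step I take $\mathcal N'=\mathcal M_{j-1}$ with its restricted canonical trace, the generator $B'=A_{j-1}$, and $\vartheta'=\beta_j=\operatorname{Ad}(U_j)|_{\mathcal M_{j-1}}$. By~\eqref{Eqn: Scalar twisted iteration}, $\mathcal M_j\cong\mathcal M_{j-1}\rtimes_{\beta_j}\mathbb Z$ by a trace-preserving isomorphism, and $U_j$ is the implementing unitary. Two things need checking.

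\textbf{The covariance condition~\eqref{Eqn: Covariance} for $(B',\beta_j)$.} For $k\in\mathbb Z^{j-1}$, the commutation relation~\eqref{Eqn: Scalar commutation phase} and $U_jaU_j^*=\vartheta_{e_j}(a)$ give
\[
\beta_j(u_ka)=e^{2\pi i\,e_j^{\mathrm T}\Theta k}\,u_k\,\vartheta_{e_j}(a).
\]
So $\beta_j$ multiplies each Fourier coefficient by a unimodular scalar and applies $\vartheta_{e_j}$. By~\eqref{Eqn: Pairwise Covariance}, $\vartheta_{e_j}$ preserves $\Dom(B)$ and commutes with $B$. The scalar phase preserves $|k|_1$. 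Hence $\beta_j$ preserves the direct-sum domain $\Dom(A_{j-1})$, which is described blockwise as in Lemma~\ref{Lem: Compatible generator}, and commutes with $A_{j-1}$ on it. The fact that $\beta_j$ is a normal trace-preserving automorphism is part of Theorem~\ref{Thm: Bedos decomposition} and the identification~\eqref{Eqn: Scalar twisted iteration}.

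\textbf{Identifying the operator Lemma~\ref{Lem: Compatible generator} produces with $A_j$.} That lemma gives a symmetric completely Markov generator on $\mathcal M_j$ acting by $U_j^m y\mapsto U_j^m(A_{j-1}+|m|)y$ for $y\in\mathcal M_{j-1}$, with semigroup $e^{-t|m|}U_j^me^{-tA_{j-1}}(y)$. Take $y=u_{k'}a$ with $k'\in\mathbb Z^{j-1}$. The product $U_j^mu_{k'}$ is a unimodular scalar multiple of $u_{(k',m)}$, by the cocycle rule and because the cocycle is trivial on $\mathbb Ze_j$. Since $|(k',m)|_1=|k'|_1+|m|$, the operator acts as $u_k a\mapsto u_k(B+|k|_1)a$ with $k=(k',m)$. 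Similarly, the semigroup becomes $e^{-t|k|_1}u_ke^{-tB}(a)$, using the inductive formula for $e^{-tA_{j-1}}$. The direct-sum domains also match, because the Fourier decomposition $L_2(\mathcal M_j)=\bigoplus_k u_kL_2(\mathcal N)$ refines the decomposition $\bigoplus_m U_j^mL_2(\mathcal M_{j-1})$ blockwise.

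The main obstacle is this bookkeeping of the right-coefficient convention against the scalar phases. The coefficient of $U_j^m$ lies in $\mathcal M_{j-1}$, while the target formula uses right coefficients in $\mathcal N$. This works only because the phases are scalars: they commute with $B$, they do not change the lengths, and they are preserved by $e^{-tB}$. Once this identification is in place, the conclusion follows from Lemma~\ref{Lem: Compatible generator} together with the trace-preserving isomorphism~\eqref{Eqn: Scalar twisted iteration}.
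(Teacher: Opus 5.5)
Your proposal is correct and follows essentially the same route as the paper: induction along the iteration $\mathcal M_j\cong\mathcal M_{j-1}\rtimes_{\beta_j}\mathbb Z$. At each step you verify the covariance condition for $(A_{j-1},\beta_j)$ from the phase formula $\beta_j(u_ka)=e^{2\pi i e_j^{\mathrm T}\Theta k}u_k\vartheta_{e_j}(a)$ and then apply Lemma~\ref{Lem: Compatible generator}. Your explicit identification of $U_j^mu_{k'}$ with a unimodular multiple of $u_{k'+me_j}$, together with the matching of direct-sum domains, just spells out what the paper compresses into ``adds the multiplier $|k_j|$ to the previous generator.''
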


\begin{proof}
For $k\in\mathbb Z^{j-1}$, the commutation relation~\eqref{Eqn: Scalar commutation phase} gives
\[
\beta_j(u_ka)=e^{2\pi i e_j^{\mathrm T}\Theta k}u_k\vartheta_{e_j}(a).
\]
Thus $\beta_j$ preserves each Fourier component and its coefficient norm. The commutation assumption makes it preserve the direct-sum domain of $A_{j-1}$ and commute with $A_{j-1}$ there. Applying Lemma~\ref{Lem: Compatible generator} at each step of~\eqref{Eqn: Scalar twisted iteration} adds the multiplier $|k_j|$ to the previous generator and proves~\eqref{Eqn: Scalar twisted Markov semigroup} by induction.
\end{proof}

\begin{proof}[Proof of Corollary~\ref{Cor: Zd crossed product LSI}]
Lemma~\ref{Lem: Scalar twisted compatible generator} supplies the Markov generator $A$. We iterate the LSI along \eqref{Eqn: Scalar twisted iteration}, distinguishing two cases.

\noindent\textbf{Case 1: $C_B>0$.} Set
\[
B'=\frac{C_B}2B.
\]
The LSI on $\mathcal N$ has constant $2$ for $B'$, and the same domain invariance and commutation relations hold after this rescaling. Apply Theorem~\ref{Thm: Sharp crossed product LSI} successively to the actions $\beta_j$. Each resulting algebra admits constant $2$, and the final generator is $A'=\widehat{B'}+\mathsf N$. Since $C_B>0$, $A$ and $A'$ have the same form domain. Hence
\[
\Ent_\tau(x^2)\leq2\mathcal E_{A'}(x)=C_B\mathcal E_{\widehat B}(x)+2\mathcal E_{\mathsf N}(x),\qquad x\in\Dom(A^{1/2})_+.
\]

\noindent\textbf{Case 2: $C_B=0$.} Then $\mathcal N=\mathbb C\mathbf1$ and $B=0$: the LSI on $\mathcal N$ gives $\Ent_{\tau_{\mathcal N}}((e^{-tB}a)^2)=0$, so $e^{-tB}a$ is scalar for every $a\in\mathcal N_+$ and $t>0$, and strong continuity at $t=0$ gives the claim. Iterating Theorem~\ref{Thm: Sharp crossed product LSI} from this scalar base yields $\Ent_\tau(x^2)\leq2\mathcal E_{\mathsf N}(x)$, which is \eqref{Ineq: Zd crossed LSI} since $\widehat B=0$.

Since $\mathcal E_A=\mathcal E_{\widehat B}+\mathcal E_{\mathsf N}$, the preceding inequality gives the LSI with constant $\max\{C_B,2\}$. Restriction to $\mathcal N$ gives the lower bound $C_B$. For the lower bound $2$, take $h=(U_1+U_1^*)/\sqrt2$. Then $\tau h=0$, $\norm h_2=1$ and $Ah=h$, so the perturbation calculation for $\mathbf1+\varepsilon h$ in the proof of Theorem~\ref{Thm: Sharp crossed product LSI} applies. Hence the constant is also at least $2$, proving optimality.
\end{proof}

Quantum tori provide the scalar-base case of this construction. We refer to~\cite{MR3079331,MR3778570} for further background on quantum tori. Let $\Theta$ be a real skew-symmetric $d\times d$ matrix. With the normalization \eqref{Eqn: Skew matrix cocycle}, the associated quantum torus $\mathbb T_\Theta^d$ has the canonical tracial realization
\[
\mathbb T_\Theta^d=\mathbb C\rtimes_{\id,\sigma_\Theta}\mathbb Z^d, \qquad\sigma_\Theta(m,\ell)=e^{\pi i m^{\mathrm T}\Theta\ell}.
\]
We denote its canonical Fourier unitaries by $W_m=u_m$, $m\in\mathbb Z^d$, and its canonical trace by $\tau_\Theta$. Thus
\[
W_mW_\ell=e^{\pi i m^{\mathrm T}\Theta\ell}W_{m+\ell},\qquad W_m^*=W_{-m},\qquad \tau_\Theta(W_m)=\mathbf1_{\{m=0\}}.
\]
For coordinate unitaries $U_j=W_{e_j}$, we have $U_iU_j=e^{2\pi i\Theta_{ij}}U_jU_i$.

\begin{proof}[Proof of Corollary~\ref{Thm: Sharp LSI on quantum tori}]
Apply Corollary~\ref{Cor: Zd crossed product LSI} with $\mathcal N=\mathbb C$, $B=0$, the trivial action and cocycle $\sigma_\Theta$. Since entropy on $\mathbb C$ vanishes, $C_B=0$, and the resulting generator $AW_m=|m|_1W_m$ has optimal logarithmic Sobolev constant $\max\{0,2\}=2$.
\end{proof}

\section{Cubic estimates involving the generator}\label{Sec: Cubic pairing estimates}
\subsection{The free-group estimate}\label{sec: weighted cubic estimation for free group}

We now prove the Fourier estimate underlying Lemma~\ref{Lem: Free cubic bound}. For Fourier polynomials $v,w$, the Fourier coefficients of their product satisfy the convolution identity
\begin{equation}\label{eq:convolution}
\widehat{vw}(g)=\sum_{pq=g}\widehat v(p)\widehat w(q).
\end{equation}
The order of the factors in this identity is essential. In the matrix calculations below, $\operatorname{Tr}$ denotes the ordinary, unnormalized matrix trace, distinct from the trace $\tau$ on $\mathcal M$. For a finite, possibly rectangular matrix $B=(B_{rs})$, the Hilbert--Schmidt and operator norms are given by
\[
\norm B_{\mathrm{HS}}^2=\operatorname{Tr}(B^*B)=\sum_{r,s}|B_{rs}|^2,\qquad \norm B_{\mathrm{op}}=\sup_{\norm\xi_{\ell_2}=1}\norm{B\xi}_{\ell_2},
\]
where $\norm\xi_{\ell_2}=(\sum_s|\xi_s|^2)^{1/2}$ is the Euclidean norm.

The following projection estimate at a fixed cancellation depth is recorded in \cite[Lemma~2.1]{xiezhang2026hypercontractivityfreegroup}, where it is traced to the proof of Haagerup's inequality \cite{MR520930}. We include the proof for completeness.
\begin{lemma}\label{lem:projection}
Suppose that the Fourier polynomials $v_i$ and $w_j$ are supported in $S_i$ and $S_j$, respectively, where $i,j\ge0$. Their product has no component of length $k$ unless
\begin{equation}\label{eq:triangle}
|i-j|\le k\le i+j,\qquad i+j+k\text{ is even}.
\end{equation}
For every such $k$,
\begin{equation}\label{eq:projection}
\norm{\Pi_k(v_iw_j)}_2\le\norm{v_i}_2\norm{w_j}_2.
\end{equation}
Moreover, with $u_k$ and $b_k$ as in \eqref{Eqn: Free centering}, we have, for $i,j,k\ge0$,
\begin{equation}\label{eq:triple}
|\tau(u_i u_j u_k)|\begin{cases} \le b_i b_j b_k,&\text{if }|i-j|\le k\le i+j\text{ and }i+j+k\text{ is even},\\
=0,&\text{otherwise}.
\end{cases}
\end{equation}
\end{lemma}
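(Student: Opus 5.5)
The plan is to run Haagerup's argument in the tree geometry of reduced words. The estimate \eqref{eq:projection} and the support condition \eqref{eq:triangle} come from the cancellation structure of products of two reduced words. The triple estimate \eqref{eq:triple} then follows by pairing $\Pi_k(u_iu_j)$ against $u_k^*$.

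\textbf{Support condition and cancellation depth.} Take $p\in S_i$ and $q\in S_j$. Reduction of $pq$ cancels some number $c$ of letters at the junction, with $0\le c\le\min\{i,j\}$. Hence $|pq|=i+j-2c$, which gives \eqref{eq:triangle}: the length is at least $|i-j|$, at most $i+j$, and has the parity of $i+j$. Fix $k$ satisfying these conditions and set $c=(i+j-k)/2$. Then $\Pi_k(v_iw_j)$ is the sum of the terms with exactly $c$ cancellations. Each such pair factors uniquely as $p=p'h$ and $q=h^{-1}q'$ with $|h|=c$, $|p'|=i-c$, $|q'|=j-c$, where $p'q'$ is reduced of length $k$.

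\textbf{Matrix form of the projection estimate.} Define matrices $V$ with rows indexed by $p'$ and columns by $h$, and $W$ with rows indexed by $h$ and columns by $q'$, by
\[
V_{p',h}=\widehat v(p'h)\ \text{(when $p'h$ is reduced)},\qquad W_{h,q'}=\widehat w(h^{-1}q')\ \text{(when $h^{-1}q'$ is reduced)},
\]
and set the entries to zero otherwise. By the convolution identity \eqref{eq:convolution}, the coefficient of $\Pi_k(v_iw_j)$ at $g=p'q'$ is bounded by $\sum_h V_{p',h}W_{h,q'}$, apart from the extra constraint that $p'q'$ be reduced. The map $(p',q')\mapsto p'q'$ on reduced concatenations is injective, because $p'$ is the prefix of length $i-c$ of $g$. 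Therefore
\[
\norm{\Pi_k(v_iw_j)}_2\le\norm{VW}_{\mathrm{HS}}\le\norm V_{\mathrm{HS}}\norm W_{\mathrm{HS}}\le\norm{v_i}_2\norm{w_j}_2.
\]
For the last step, each $p\in S_i$ gives at most one pair $(p',h)$, namely the split at position $i-c$, and similarly for $q$. The one place needing care is the reducedness constraint. Restricting to reduced $p'q'$ means taking a subset of the entries of $VW$, which can only lower the Hilbert--Schmidt norm. I would phrase this as the coefficient at $g$ being exactly the $(p',q')$ entry of $VW$ restricted to admissible pairs, and check that no other factorization of $g$ with $c$ cancellations exists. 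This follows from uniqueness of reduced forms.

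\textbf{Triple product.} By Fourier orthogonality,
\[
\tau(u_iu_ju_k)=\inner{u_k^*,\Pi_k(u_iu_j)}_2,
\]
because $u_k^*$ is supported in $S_k$. Here $u$ is self-adjoint and inversion preserves length, so $u_k^*=u_k$ has norm $b_k$. If $(i,j,k)$ violates \eqref{eq:triangle}, then $\Pi_k(u_iu_j)=0$ and the trace vanishes. Otherwise Cauchy--Schwarz followed by \eqref{eq:projection} gives
\[
|\tau(u_iu_ju_k)|\le b_k\,b_ib_j.
\]
The case $i=0$ or $j=0$ is trivial since $u_0=0$, and it is consistent with the bound in any case.

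I expect the matrix bookkeeping to be the main obstacle, specifically the injectivity and uniqueness of the splitting in the second step. I would handle it by indexing everything via the prefix and suffix of the reduced word $g$.
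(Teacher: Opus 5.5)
Your proposal is correct and follows the paper's proof essentially step for step. Both use the cancellation-depth argument for the support condition, the prefix/junction/suffix matrices whose product gives $\widehat{v_iw_j}(\alpha\beta)$ on reduced concatenations, and the pairing of $u_k$ against $\Pi_k(u_iu_j)$ for the triple bound. The differences are cosmetic: you cite $\norm{VW}_{\mathrm{HS}}\le\norm V_{\mathrm{HS}}\norm W_{\mathrm{HS}}$ where the paper writes out the entrywise Cauchy--Schwarz, and the coefficient at $g=p'q'$ is exactly (not merely bounded by) the $(p',q')$ entry of $VW$, as you note yourself.
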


\begin{proof}
Consider reduced words $p\in S_i$ and $q\in S_j$ in the Fourier supports of $v_i$ and $w_j$, respectively. Reducing the product $pq$ removes a maximal terminal block $c$ of $p$ and $c^{-1}$ from the beginning of $q$. If $d=|c|$, then
\[
0\le d\le\min(i,j),\qquad k=|pq|=i+j-2d.
\]
This proves the support restriction \eqref{eq:triangle}.

Fix $k$ satisfying \eqref{eq:triangle}, so that the cancellation depth is $d=(i+j-k)/2$. Choose an ordering of each set $S_r$ and form matrices $U\in M_{|S_{i-d}|\times|S_d|}(\mathbb C)$ and $V\in M_{|S_d|\times|S_{j-d}|}(\mathbb C)$ from the Fourier coefficients of $v_i$ and $w_j$. The rows and columns of $U$ are indexed by $\alpha\in S_{i-d}$ and $c\in S_d$, respectively; those of $V$ are indexed by the same $c\in S_d$ and by $\beta\in S_{j-d}$. Set
\[
U_{\alpha,c}=
\begin{cases}
\widehat v_i(\alpha c),&\alpha c\text{ reduced},\\
0,&\text{otherwise},
\end{cases}
\qquad V_{c,\beta}=
\begin{cases}
\widehat w_j(c^{-1}\beta),&c^{-1}\beta\text{ reduced},\\
0,&\text{otherwise}.
\end{cases}
\]
Each word $p\in S_i$ has a unique representation as a reduced concatenation $\alpha c$ with $|\alpha|=i-d$ and $|c|=d$. Similarly, each $q\in S_j$ has a unique representation as a reduced concatenation $c^{-1}\beta$ with $|c|=d$ and $|\beta|=j-d$. Thus each input word determines exactly one matrix entry. All entries corresponding to nonreduced concatenations are zero, so we may restrict the sums to reduced concatenations. Parseval's identity gives
\[
\begin{aligned}
\norm U_{\mathrm{HS}}^2 &=\sum_{\substack{\alpha\in S_{i-d}\\c\in S_d}}|U_{\alpha,c}|^2=\sum_{\substack{\alpha\in S_{i-d},\,c\in S_d\\\alpha c\text{ reduced}}}\abs{\widehat v_i(\alpha c)}^2=\sum_{p\in S_i}|\widehat v_i(p)|^2=\norm{v_i}_2^2,\\
\norm V_{\mathrm{HS}}^2 &=\sum_{\substack{c\in S_d\\\beta\in S_{j-d}}}|V_{c,\beta}|^2=\sum_{\substack{c\in S_d,\,\beta\in S_{j-d}\\c^{-1}\beta\text{ reduced}}}|\widehat w_j(c^{-1}\beta)|^2=\sum_{q\in S_j}|\widehat w_j(q)|^2=\norm{w_j}_2^2.
\end{aligned}
\]
For $\alpha\in S_{i-d}$ and $\beta\in S_{j-d}$ with $\alpha\beta$ reduced, every factorization $pq=\alpha\beta$ with $p\in S_i$ and $q\in S_j$ has cancellation depth $d$ and takes the form $p=\alpha c$, $q=c^{-1}\beta$. Thus matrix multiplication sums over exactly the possible cancelled blocks, and \eqref{eq:convolution} gives
\[
(UV)_{\alpha,\beta}=\sum_{c\in S_d}U_{\alpha,c}V_{c,\beta}=\sum_{\substack{c\in S_d\\\alpha c\text{ reduced}\\c^{-1}\beta\text{ reduced}}}\widehat v_i(\alpha c)\widehat w_j(c^{-1}\beta) =\widehat{v_iw_j}(\alpha\beta).
\]
Every reduced concatenation $\alpha\beta$ has length $i+j-2d=k$, and every word in $S_k$ has exactly one such split after $i-d$ letters. Consequently,
\[
\norm{\Pi_k(v_iw_j)}_2^2=\sum_{\substack{\alpha\in S_{i-d},\,\beta\in S_{j-d}\\\alpha\beta\text{ reduced}}} \left|\sum_{c\in S_d}U_{\alpha,c}V_{c,\beta}\right|^2.
\]
Applying Cauchy--Schwarz and then dropping the restriction that $\alpha\beta$ is reduced, which only adds nonnegative terms, gives
\[
\norm{\Pi_k(v_iw_j)}_2^2\le\sum_{\substack{\alpha\in S_{i-d}\\\beta\in S_{j-d}}} \left(\sum_{c\in S_d}|U_{\alpha,c}|^2\right) \left(\sum_{c\in S_d}|V_{c,\beta}|^2\right)=\norm U_{\mathrm{HS}}^2\norm V_{\mathrm{HS}}^2=\norm{v_i}_2^2\norm{w_j}_2^2.
\]
This proves \eqref{eq:projection}.

To prove \eqref{eq:triple}, suppose first that $|i-j|\le k\le i+j$ and $i+j+k$ is even. Self-adjointness and orthogonality of the Fourier components of different lengths give
\[
\abs{\tau(u_i u_j u_k)}=\abs{\inner{u_k,u_i u_j}_2}=\abs{\inner{u_k,\Pi_k(u_iu_j)}_2}\leq \norm{u_k}_2\norm{\Pi_k(u_iu_j)}_2\leq b_ib_jb_k.
\]
Otherwise, $\Pi_k(u_iu_j)=0$, so $\tau(u_i u_j u_k)=0$.
\end{proof}

\begin{proof}[Proof of Lemma~\ref{Lem: Free cubic bound}]
Let $u$ be a self-adjoint Fourier polynomial with $\tau(u)=0$, so all Fourier sums are finite. Averaging over the three cyclic permutations gives
\begin{equation}\label{eq:cyclic}
\operatorname{Re}\inner{Au,u^2}_2=\operatorname{Re}\sum_{i,j,k\ge1}\frac{i+j+k}{3}\tau(u_i u_j u_k).
\end{equation}
By \eqref{eq:triple}, only triples satisfying \eqref{eq:triangle} contribute to this sum. Every such triple can be written uniquely as
\[
i=r+s,\qquad j=s+t,\qquad k=t+r,\qquad r,s,t\ge0.
\]
For example, $r=(i+k-j)/2$, with analogous formulas for $s$ and $t$. The triangle and parity conditions ensure that $r,s,t$ are nonnegative integers. The weight in \eqref{eq:cyclic} becomes $2(r+s+t)/3$. Since $u_0=0$, we may include triples with a zero index and sum over all $r,s,t\ge0$:
\[
\operatorname{Re}\inner{Au,u^2}_2=\operatorname{Re}\sum_{r,s,t\ge 0}\frac{2(r+s+t)}{3}\tau(u_{r+s} u_{s+t} u_{t+r}).
\]
We distinguish three cases:
\begin{enumerate}
\item If at least two of $r,s,t$ are zero, then $\tau(u_{r+s}u_{s+t}u_{t+r})=0$ because $u_0=0$.
\item If exactly one of $r,s,t$ is zero, there are three possible positions for the zero.
\item If $r,s,t$ are all positive, the bound $b_{r+s}b_{s+t}b_{t+r}$ is unchanged by permutations of $r,s,t$.
\end{enumerate}
Bounding each real part by the corresponding absolute value, applying \eqref{eq:triple}, and using these symmetries, we obtain
\begin{equation}\label{eq:splitmajorant}
\operatorname{Re}\inner{Au,u^2}_2\le2\sum_{r,s\ge1}(r+s)b_r b_s b_{r+s}+2\sum_{r,s,t\ge1}r\,b_{r+s}b_{s+t}b_{t+r}=4\sum_{r,s\ge1}rb_r b_s b_{r+s}+2\sum_{r,s,t\ge1}r\,b_{r+s}b_{s+t}b_{t+r}.
\end{equation}

For the first term in the final expression, Cauchy--Schwarz yields
\[
4\sum_{r,s\ge1}r b_r b_s b_{r+s}\le 4\left(\sum_{r,s\ge1}r b_r^2b_s^2\right)^{1/2} \left(\sum_{r,s\ge1}r b_{r+s}^2\right)^{1/2}.
\]
The first sum under a square root is $\norm u_2^2\inner{Au,u}_2$, while the second is
\[
\sum_{r,s\ge1}r b_{r+s}^2 =\sum_{k\ge2}\left(\sum_{r=1}^{k-1}r\right)b_k^2 =\frac{1}{2}\sum_{k\ge2}k(k-1)b_k^2=\frac{1}{2}\inner{Au,(A-I)u}_2.
\]
Consequently,
\begin{equation}\label{eq:boundary}
4\sum_{r,s\ge1}r b_r b_s b_{r+s} \le 4(\norm{u}_2^2\inner{Au,u}_2)^{1/2}\left( \frac{\inner{Au,(A-I)u}_2}{2} \right)^{\frac{1}{2}}=2\norm{u}_2\sqrt{2\inner{Au,u}_2\inner{Au,(A-I)u}_2}.
\end{equation}

For the sum with $r,s,t\ge1$, choose $N\ge1$ so that $b_k=0$ for $k>N$, and define the real symmetric matrix $M$ and diagonal weight $D$ by
\[
M=(b_{r+s})_{1\le r,s\le N},\qquad D=\operatorname{diag}(1,2,\ldots,N).
\]
Here $M$ is formed from the norms $b_k$, and $D$ records the weight $r$. Expanding the trace gives
\[
\operatorname{Tr}(DM^3) =\sum_{r,s,t=1}^N r\,M_{rs}M_{st}M_{tr} =\sum_{r,s,t\ge1}r\,b_{r+s}b_{s+t}b_{t+r}.
\]
Thus the second term on the right of \eqref{eq:splitmajorant} is $2\operatorname{Tr}(DM^3)$.

Since $M$ is self-adjoint, spectral calculus gives
\[
\norm M_{\mathrm{op}}\le\norm M_{\mathrm{HS}},\qquad M\le\norm M_{\mathrm{op}}I.
\]
The matrix $\abs M=(M^2)^{1/2}$ commutes with $M$, so multiplication on the left and right by $\abs M$ yields
\[
M^3=\abs{M}M\abs{M}\le\norm M_{\mathrm{op}}\abs{M}^2=\norm M_{\mathrm{op}}M^2.
\]
Multiplying on the left and right by $D^{1/2}$ and taking traces gives
\[
2\operatorname{Tr}(DM^3)=2\operatorname{Tr}(D^{\frac{1}{2}}M^3D^{\frac{1}{2}})\le2\norm M_{\mathrm{op}}\operatorname{Tr}(D^{\frac{1}{2}}M^2D^{\frac{1}{2}})\le2\norm M_{\mathrm{HS}}\operatorname{Tr}(DM^2).
\]
The required matrix counts are
\[
\norm M_{\mathrm{HS}}^2=\sum_{r,s=1}^N b_{r+s}^2 =\sum_{k\ge2}(k-1)b_k^2=\inner{Au,u}_2-\norm{u}_2^2,
\]
and
\[
\operatorname{Tr}(DM^2)=\sum_{r,s=1}^N r\,b_{r+s}^2 =\sum_{k\ge2}\frac{k(k-1)}2b_k^2=\frac{\inner{Au,(A-I)u}_2}{2}.
\]
For $2\le k\le N$, there are $k-1$ positive integer pairs with $r+s=k$, and their weights $r$ sum to $k(k-1)/2$. Terms with $k>N$ vanish because $b_k=0$. Substitution yields
\begin{equation}\label{eq:interior}
2\operatorname{Tr}(DM^3)\leq \inner{Au,(A-I)u}_2\sqrt{\inner{Au,u}_2-\norm{u}_2^2}.
\end{equation}
Combining \eqref{eq:splitmajorant}, \eqref{eq:boundary}, and \eqref{eq:interior} proves \eqref{Ineq: Free cubic bound} for self-adjoint Fourier polynomials with trace zero, as stated in Lemma~\ref{Lem: Free cubic bound}.
\end{proof}
\subsection{The integer and finite-cyclic estimates}\label{Sec: Weighted cubic estimates}
We now prove the Fourier estimate underlying Lemma~\ref{Lem: Cyclic weighted cubic bound}. We adapt the Fourier grouping in~\cite[Lemmas~3.2--3.3]{yao2026completesolutionoptimalhypercontractivity} to the word-length weights. We first treat odd cyclic groups, then account for the self-inverse frequency in the even case. For the integer group, only the triples with $i+j=k$ occur.
\begin{proof}[Proof of Lemma~\ref{Lem: Cyclic weighted cubic bound}]
We first expand the cubic term $\inner{\mathsf N_Gu,u^2}_2$. Since $u=\sum_{k\in G}\widehat u(k)\chi_k$, Fourier orthogonality gives
\[
\widehat{u^2}(k)=\sum_{i+j=k}\widehat u(i)\widehat u(j).
\]
As $u$ is real, $\overline{\widehat u(k)}=\widehat u(-k)$. Parseval's identity and symmetry in the three indices therefore yield
\[
\begin{aligned}
\inner{\mathsf N_Gu,u^2}_2&=\sum_{k\in G}\ell(k)\overline{\widehat u(k)}\sum_{i+j=k}\widehat u(i)\widehat u(j)\\
&=\sum_{i+j+k=0}\ell(k)\widehat u(i)\widehat u(j)\widehat u(k)\\
&=\frac13\sum_{i+j+k=0}\bigl(\ell(i)+\ell(j)+\ell(k)\bigr)\widehat u(i)\widehat u(j)\widehat u(k).
\end{aligned}
\]
In this Fourier expansion, the indices belong to $G$, so the convolution and zero-sum conditions are taken modulo $n$ when $G=\mathbb Z_n$. Since $\widehat u(0)=0$, only triples of nonzero frequencies contribute. The positive-index conditions used below are ordinary integer equalities.

For $G=\mathbb Z_n$, put $m=\lfloor n/2\rfloor$ and $b_j=|\widehat u(j)|$ for $1\le j<n/2$; if $n=2m$, put $b_m=|\widehat u(m)|$.

\noindent\textbf{Case $G=\mathbb Z_{2m+1}$ with $m\ge2$.} Reality of $u$ gives
\[
\norm u_2^2=2\sum_{j=1}^mb_j^2,\qquad \inner{u,\mathsf N_Gu}_2=2\sum_{j=1}^mjb_j^2,\qquad \norm{\mathsf N_Gu}_2^2=2\sum_{j=1}^mj^2b_j^2.
\]
Up to permutations and simultaneous sign changes, the nonzero triples whose sum is zero in $G$ are
\[
(i,j,-k),\quad i+j=k,\quad 1\le i,j,k\le m, \qquad\text{and}\qquad (i,j,k),\quad i+j+k=n,\quad 1\le i,j,k\le m.
\]
Grouping conjugate triples and taking absolute values, we obtain
\[
\begin{aligned}
\bigl|\inner{\mathsf N_Gu,u^2}_2\bigr|
&\le 2\sum_{\substack{1\le i,j,k\le m\\i+j=k}}(i+j+k)b_ib_jb_k
+2\sum_{\substack{1\le i,j,k\le m\\i+j+k=n}}k\,b_ib_jb_k\\
&=2\sum_{k=2}^mb_k\left(2k\sum_{i=1}^{k-1}b_ib_{k-i}\right)
+\frac{2n}{3}\sum_{k=1}^mb_k\sum_{\substack{1\le i,j\le m\\i+j=n-k}}b_ib_j\\
&=2\sum_{k=2}^mb_k\left(
2k\sum_{i=1}^{k-1}b_ib_{k-i}
+\frac n3\sum_{\substack{1\le i,j\le m\\i+j=n-k}}b_ib_j
\right)+\frac{2n}{3}b_1b_m^2\\
&=2\sum_{k=2}^mb_kG_k,
\end{aligned}
\]
where
\[
G_k=
\begin{cases}
2k\displaystyle\sum_{i=1}^{k-1}b_ib_{k-i}
+\dfrac n3\displaystyle\sum_{\substack{1\le i,j\le m\\i+j=n-k}}b_ib_j,
&2\le k<m,\\[6pt]
2m\displaystyle\sum_{i=1}^{m-1}b_ib_{m-i}
+\dfrac n3\displaystyle\sum_{\substack{1\le i,j\le m\\i+j=m+1}}b_ib_j
+\dfrac n3b_1b_m,
&k=m.
\end{cases}
\]
Here the factor $n/3$ follows by symmetry in the three indices of the sum with $i+j+k=n$. The term with $k=1$ in that sum is incorporated into $G_m$.

By the Cauchy--Schwarz inequality,
\[
\bigl|\inner{\mathsf N_Gu,u^2}_2\bigr|^2 \le\left(\sum_{k=2}^mk(k-1)b_k^2\right) \left(4\sum_{k=2}^m\frac{G_k^2}{k(k-1)}\right).
\]
Define the weights
\[
d_{ij}=2\bigl(i(i+1)+j(j+1)\bigr) =(i+j)(i+j+2)+(i-j)^2.
\]
For $2\le k\le m$, set
\[
B_k=\sum_{i=1}^{k-1}d_{i,k-i}b_i^2b_{k-i}^2
+\sum_{\substack{1\le i,j\le m\\i+j=n-k}}d_{ij}b_i^2b_j^2.
\]

If $2\le k<m$, weighted Cauchy--Schwarz gives
\[
G_k^2\le\left(
4k^2\sum_{i=1}^{k-1}\frac1{d_{i,k-i}}
+\frac{n^2}{9}\sum_{\substack{1\le i,j\le m\\i+j=n-k}}\frac1{d_{ij}}
\right)B_k.
\]
There are exactly $k-1$ ordered pairs with $i+j=k$, and $d_{i,k-i}\ge k(k+2)$. Hence
\[
4k^2\sum_{i=1}^{k-1}\frac1{d_{i,k-i}} \le\frac{4k(k-1)}{k+2}.
\]
For the pairs with $i+j=n-k$, the first index ranges from $m+1-k$ to $m$, so there are exactly $k$ such pairs. Moreover,
\[
d_{ij}\ge(n-k)(n-k+2).
\]
Since $k\le m<n/2$, we obtain
\[
\frac{n^2}{9}\sum_{\substack{1\le i,j\le m\\i+j=n-k}}\frac1{d_{ij}}
\le\frac{n^2k}{9(n-k)(n-k+2)}
\le\frac{4k}{9}.
\]
Therefore,
\[
G_k^2\le\left(\frac{4k(k-1)}{k+2}+\frac{4k}{9}\right)B_k \le2k(k-1)B_k,\qquad 2\le k<m,
\]
where the last inequality follows from $4/(k+2)\le1$ and $4/9\le k-1$.

We now consider $k=m$. In this case,
\[
B_m=\sum_{i=1}^{m-1}d_{i,m-i}b_i^2b_{m-i}^2
+\sum_{\substack{1\le i,j\le m\\i+j=m+1}}d_{ij}b_i^2b_j^2.
\]
Distributing the additional term $(n/3)b_1b_m$ equally between the ordered pairs $(1,m)$ and $(m,1)$, their coefficients in $G_m$ become $n/2$. The remaining $m-2$ pairs with $i+j=m+1$ have coefficient $n/3$. For these pairs,
\[
d_{ij}\ge(m+1)(m+3)=(n-m)(n-m+2).
\]
Applying weighted Cauchy--Schwarz, we obtain
\[
\begin{aligned}
G_m^2
&\le\left[
\frac{4m(m-1)}{m+2}
+\frac{n^2(m-2)}{9(m+1)(m+3)}
+\frac{2n^2}{4(m+1)(m+3)}
\right]B_m\\
&=\left[
\frac{4m(m-1)}{m+2}
+\frac{n^2}{9(m+1)(m+3)}
\left(m+\frac52\right)
\right]B_m\\
&\le\left[
\frac{4m(m-1)}{m+2}
+\frac49\left(m+\frac52\right)
\right]B_m\\
&\le2m(m-1)B_m.
\end{aligned}
\]
Consequently,
\[
G_k^2\le2k(k-1)B_k,\qquad 2\le k\le m.
\]

The pair families defining $B_2,\ldots,B_m$ are disjoint and cover $\{1,\ldots,m\}^2$ except for $(m,m)$. Therefore,
\[
4\sum_{k=2}^m\frac{G_k^2}{k(k-1)}\le8\sum_{k=2}^mB_k=8\left(\sum_{i,j=1}^md_{ij}b_i^2b_j^2-d_{mm}b_m^4\right)\le8\sum_{i,j=1}^md_{ij}b_i^2b_j^2.
\]
Since
\[
\sum_{i,j=1}^md_{ij}b_i^2b_j^2 =4\left(\sum_{i=1}^mb_i^2\right) \left(\sum_{j=1}^mj(j+1)b_j^2\right),
\]
the preceding Cauchy--Schwarz estimate gives
\[
\begin{aligned}
\bigl|\inner{\mathsf N_Gu,u^2}_2\bigr|^2
&\le8\left(\sum_{k=2}^mk(k-1)b_k^2\right)
\sum_{i,j=1}^md_{ij}b_i^2b_j^2\\
&=32\left(\sum_{i=1}^mb_i^2\right)
\left(\sum_{j=1}^mj(j-1)b_j^2\right)
\left(\sum_{k=1}^mk(k+1)b_k^2\right)\\
&=4\norm u_2^2
\left(\norm{\mathsf N_Gu}_2^2-\inner{u,\mathsf N_Gu}_2\right)
\left(\norm{\mathsf N_Gu}_2^2+\inner{u,\mathsf N_Gu}_2\right)\\
&=4\norm u_2^2
\left(\norm{\mathsf N_Gu}_2^4-\inner{u,\mathsf N_Gu}_2^2\right).
\end{aligned}
\]
This proves the desired estimate for $G=\mathbb Z_{2m+1}$ with $m\ge2$.

\noindent\textbf{Case $G=\mathbb Z_{2m}$ with $m\ge1$.} The self-inverse frequency $m$ occurs only once, so
\[
\norm u_2^2=2\sum_{j=1}^{m-1}b_j^2+b_m^2,\qquad \inner{u,\mathsf N_Gu}_2=2\sum_{j=1}^{m-1}jb_j^2+mb_m^2,\qquad \norm{\mathsf N_Gu}_2^2=2\sum_{j=1}^{m-1}j^2b_j^2+m^2b_m^2.
\]
If $m=1$, then $G=\mathbb Z_2$. The only nonzero Fourier frequency is $1$, so $\mathsf N_Gu=u$ and $u^2$ is constant. Since $u$ has mean zero, we obtain
\[
\inner{\mathsf N_Gu,u^2}_2=0,\qquad \norm{\mathsf N_Gu}_2^4-\inner{u,\mathsf N_Gu}_2^2=0.
\]
Thus the desired inequality holds with equality.

Assume henceforth that $m\ge2$. Up to permutations and simultaneous sign changes, the nonzero triples whose sum is zero in $G$ are
\[
(i,j,-k),\quad i+j=k,\quad 1\le i,j,k\le m, \qquad\text{and}\qquad (i,j,k),\quad i+j+k=n,\quad 1\le i,j,k<m.
\]
The first family includes the triples containing the self-inverse frequency $m$, while the second family contains only frequencies less than $m$. Grouping conjugate triples and taking absolute values, we obtain
\[
\begin{aligned}
\bigl|\inner{\mathsf N_Gu,u^2}_2\bigr|
&\le2\sum_{\substack{1\le i,j,k\le m\\i+j=k}}
(i+j+k)b_ib_jb_k
+2\sum_{\substack{1\le i,j,k<m\\i+j+k=n}}
k\,b_ib_jb_k\\
&=2\sum_{k=2}^mb_k\left(
2k\sum_{i=1}^{k-1}b_ib_{k-i}
\right)
+\frac{2n}{3}\sum_{k=2}^{m-1}b_k
\sum_{\substack{1\le i,j<m\\i+j=n-k}}b_ib_j\\
&=2\sum_{k=2}^mb_kG_k,
\end{aligned}
\]
where
\[
G_k=
\begin{cases}
2k\displaystyle\sum_{i=1}^{k-1}b_ib_{k-i}
+\dfrac n3\displaystyle\sum_{\substack{1\le i,j<m\\i+j=n-k}}b_ib_j,
&2\le k<m,\\[6pt]
2m\displaystyle\sum_{i=1}^{m-1}b_ib_{m-i},
&k=m.
\end{cases}
\]
There is no contribution with $k=1$ in this sum, since $i,j<m$ implies $i+j\le n-2$. Applying Cauchy--Schwarz with weights $k(k-1)$ for $k<m$ and $m(m-1)/2$ for $k=m$, we obtain
\[
\bigl|\inner{\mathsf N_Gu,u^2}_2\bigr|^2 \le \left( \sum_{k=2}^{m-1}k(k-1)b_k^2 +\frac{m(m-1)}2b_m^2 \right)\times \left( 4\sum_{k=2}^{m-1}\frac{G_k^2}{k(k-1)} +\frac{8G_m^2}{m(m-1)} \right).
\]
Define the weights
\[
d_{ij}=2\bigl(i(i+1)+j(j+1)\bigr) =(i+j)(i+j+2)+(i-j)^2.
\]
For $2\le k\le m$, set
\[
B_k=
\begin{cases}
\displaystyle\sum_{i=1}^{k-1}d_{i,k-i}b_i^2b_{k-i}^2
+\displaystyle\sum_{\substack{1\le i,j<m\\i+j=n-k}}d_{ij}b_i^2b_j^2,
&2\le k<m,\\[6pt]
\displaystyle\sum_{i=1}^{m-1}d_{i,m-i}b_i^2b_{m-i}^2,
&k=m.
\end{cases}
\]
For $2\le k<m$, the same weighted Cauchy--Schwarz estimate as in the odd case gives
\[
G_k^2\le2k(k-1)B_k.
\]
For $k=m$, there is only the sum with $i+j=m$. Since $d_{i,m-i}\ge m(m+2)$, we have the stronger bound
\[
G_m^2\le\frac{4m(m-1)}{m+2}B_m\le m(m-1)B_m.
\]
The pair families defining $B_2,\ldots,B_m$ are disjoint and cover $\{1,\ldots,m-1\}^2$. Indeed, the sums $i+j=k$ with $2\le k<m$ cover the pairs with $2\le i+j<m$, the sum defining $B_m$ covers $i+j=m$, and the sums $i+j=n-k$ with $2\le k<m$ cover $m<i+j\le2m-2$. Consequently,
\[
4\sum_{k=2}^{m-1}\frac{G_k^2}{k(k-1)} +\frac{8G_m^2}{m(m-1)}\le8\sum_{k=2}^mB_k=8\sum_{i,j=1}^{m-1}d_{ij}b_i^2b_j^2=32\left(\sum_{i=1}^{m-1}b_i^2\right) \left(\sum_{j=1}^{m-1}j(j+1)b_j^2\right).
\]
Using the quadratic Fourier identities, we have
\[
\begin{aligned}
\sum_{k=2}^{m-1}k(k-1)b_k^2+\frac{m(m-1)}2b_m^2
&=\frac12\left(
\norm{\mathsf N_Gu}_2^2-\inner{u,\mathsf N_Gu}_2
\right),\\
\sum_{i=1}^{m-1}b_i^2
&\le\frac12\norm u_2^2,\\
\sum_{j=1}^{m-1}j(j+1)b_j^2
&\le\frac12\left(
\norm{\mathsf N_Gu}_2^2+\inner{u,\mathsf N_Gu}_2
\right).
\end{aligned}
\]
Substituting these estimates into the preceding Cauchy--Schwarz bound, we conclude
\[
\begin{aligned}
\bigl|\inner{\mathsf N_Gu,u^2}_2\bigr|^2
&\le32\left(
\sum_{k=2}^{m-1}k(k-1)b_k^2+\frac{m(m-1)}2b_m^2
\right)
\left(\sum_{i=1}^{m-1}b_i^2\right)
\left(\sum_{j=1}^{m-1}j(j+1)b_j^2\right)\\
&\le4\norm u_2^2
\left(
\norm{\mathsf N_Gu}_2^2-\inner{u,\mathsf N_Gu}_2
\right)
\left(
\norm{\mathsf N_Gu}_2^2+\inner{u,\mathsf N_Gu}_2
\right)\\
&=4\norm u_2^2
\left(
\norm{\mathsf N_Gu}_2^4-\inner{u,\mathsf N_Gu}_2^2
\right).
\end{aligned}
\]
This proves the desired estimate for $G=\mathbb Z_{2m}$ with $m\ge1$.

\noindent\textbf{Case $G=\mathbb Z$.} Let $u$ be a real mean-zero trigonometric polynomial, and put $b_j=|\widehat u(j)|$ for $j\ge1$. Then
\[
\norm u_2^2=2\sum_{j\ge1}b_j^2,\qquad \inner{u,\mathsf N_Gu}_2=2\sum_{j\ge1}jb_j^2,\qquad \norm{\mathsf N_Gu}_2^2=2\sum_{j\ge1}j^2b_j^2.
\]
All sums have only finitely many nonzero terms.

Up to permutations and simultaneous sign changes, the nonzero triples whose sum is zero in $\mathbb Z$ are
\[
(i,j,-k),\qquad i+j=k,\qquad i,j,k\ge1.
\]
Consequently,
\[
\abs{\inner{\mathsf N_Gu,u^2}_2}
\le
2\sum_{\substack{i,j,k\ge1\\i+j=k}}
(i+j+k)b_ib_jb_k=
4\sum_{k\ge2}k b_k
\sum_{i=1}^{k-1}b_ib_{k-i}=
2\sum_{k\ge2}b_kG_k,
\]
where
\[
G_k=2k\sum_{i=1}^{k-1}b_ib_{k-i},\qquad k\ge2.
\]
By the Cauchy--Schwarz inequality,
\[
\abs{\inner{\mathsf N_Gu,u^2}_2}^2 \le \left(\sum_{k\ge2}k(k-1)b_k^2\right) \left(4\sum_{k\ge2}\frac{G_k^2}{k(k-1)}\right).
\]
Define
\[
d_{ij}=2\bigl(i(i+1)+j(j+1)\bigr)=(i+j)(i+j+2)+(i-j)^2, \qquad B_k=\sum_{i=1}^{k-1}d_{i,k-i}b_i^2b_{k-i}^2, \qquad k\ge2.
\]
We have $d_{i,k-i}\ge k(k+2)$. Another application of the Cauchy--Schwarz inequality therefore gives
\[
G_k^2 \le 4k^2\left(\sum_{i=1}^{k-1}\frac1{d_{i,k-i}}\right)B_k\le \frac{4k(k-1)}{k+2}B_k\le k(k-1)B_k, \qquad k\ge2,
\]
where the last inequality follows from $k+2\ge4$. The pair families defining $B_k$, $k\ge2$, are disjoint and cover all ordered pairs of positive integers. Hence
\[
4\sum_{k\ge2}\frac{G_k^2}{k(k-1)} \le 4\sum_{k\ge2}B_k= 4\sum_{i,j\ge1}d_{ij}b_i^2b_j^2= 16\left(\sum_{i\ge1}b_i^2\right) \left(\sum_{j\ge1}j(j+1)b_j^2\right).
\]
Combining this with the preceding Cauchy--Schwarz estimate, we obtain
\[
\begin{aligned}
\abs{\inner{\mathsf N_Gu,u^2}_2}^2
&\le
16\left(\sum_{i\ge1}b_i^2\right)
\left(\sum_{j\ge1}j(j-1)b_j^2\right)
\left(\sum_{k\ge1}k(k+1)b_k^2\right)\\
&=
2\norm u_2^2
\left(\norm{\mathsf N_Gu}_2^2-\inner{u,\mathsf N_Gu}_2\right)
\left(\norm{\mathsf N_Gu}_2^2+\inner{u,\mathsf N_Gu}_2\right)\\
&=
2\norm u_2^2
\left(\norm{\mathsf N_Gu}_2^4-\inner{u,\mathsf N_Gu}_2^2\right).
\end{aligned}
\]
Thus the integer case satisfies the stronger estimate with constant $2$. In all three cases, we obtain the claimed bound
\[
\abs{\inner{\mathsf N_Gu,u^2}_2}^2 \le 4\norm u_2^2 \left(\norm{\mathsf N_Gu}_2^4-\inner{u,\mathsf N_Gu}_2^2\right).
\]
This proves the lemma.
\end{proof}

\section*{Acknowledgments}
The author thanks Professor Haonan Zhang for helpful discussions on the monotonicity of the logarithmic Sobolev deficit in the free-group case and Professor Simeng Wang for his useful comments on the manuscript. She is also grateful to Dr. Hao Zhang and Dr. Tiantian Zhao for sharing partial results on related problems. The author acknowledges the use of ChatGPT-6 Astra and takes responsibility for the mathematical content and final text.

\begingroup
\emergencystretch=2em
\printbibliography
\endgroup
\addresseshere

\end{document}